\definecolor{winered}{rgb}{0.6,0,0}
\definecolor{lessblue}{rgb}{0,0,0.7}
\newcommand{\myitem}[3]{\item[#2]\def\@currentlabel{#3}\label{#1}}
\def\@tocline#1#2#3#4#5#6#7{
\begingroup
  \par
    \parindent\z@ \leftskip#3 \relax \advance\leftskip\@tempdima\relax
                  \rightskip\@pnumwidth plus 4em \parfillskip-\@pnumwidth
    \ifcase #1 
       \vskip 0.6em \hskip 0em 
       \or
       \or \hskip 0em 
       \or \hskip 1em 
    \fi%
    %
    #6
    %
    \nobreak\relax{\leavevmode\leaders\hbox{\,.}\hfill}
    \hbox to\@pnumwidth {\@tocpagenum{#7}}
  \par
\endgroup
}
 \def\l@section{\@tocline{0}{0pt}{0pc}{}{}}
\renewcommand{\tocsection}[3]{%
  \indentlabel{\@ifnotempty{#2}{ 
    \ignorespaces\bfseries{#2. #3}}}
  \indentlabel{\@ifempty{#2}{\ignorespaces\bfseries{#3}}{}} 
    \vspace{1.5pt}}
\renewcommand{\tocsubsection}[3]{%
  \indentlabel{\@ifnotempty{#2}{
    \ignorespaces#2. #3}}
  \indentlabel{\@ifempty{#2}{\ignorespaces #3}{}}
    \vspace{1.5pt}}
\renewcommand{\tocsubsubsection}[3]{%
  \indentlabel{\@ifnotempty{#2}{
    \ignorespaces#2. #3}}
  \indentlabel{\@ifempty{#2}{\ignorespaces #3}{}}
    \vspace{1.5pt}}
\def\@nomenstarted{0}
\newlength{\@nomenoldtabcolsep}
\newcommand{\nomenstart}
  {%
    \def\@nomenstarted{1}%
    \setlength{\@nomenoldtabcolsep}{\tabcolsep}%
    \setlength{\tabcolsep}{3.5pt}%
    \begin{longtable}{p{0.11\textwidth} p{0.86\textwidth}}
  }
\newcommand{\nomenitem}[2]{%
    \ifcase\@nomenstarted%
      \or 
      \or \\ 
    \fi%
    #1\,{\leavevmode\leaders\hbox{\,.}\hfill} & #2%
    \def\@nomenstarted{2}%
  }%
\newcommand{\nomenend}
  {\\%
      \end{longtable}%
      \setlength{\tabcolsep}{\@nomenoldtabcolsep}%
      \def\@nomenstarted{0}%
  }
\newcommand{\vast}{\bBigg@{4}}
\newcommand{\Vast}{\bBigg@{5}}
\newcommand{\VAST}[1]{\bBigg@{#1}}
\numberwithin{equation}{section}
\numberwithin{figure}{section}
\newtheorem{thm}{Theorem}[section]
\newtheorem{prop}[thm]{Proposition}
\newtheorem{lemma}[thm]{Lemma}
\newtheorem{cor}[thm]{Corollary}
\newtheorem*{thm*}{Theorem}
\newtheorem*{prop*}{Proposition}
\newtheorem*{cor*}{Corollary}
\newtheorem*{conj*}{Conjecture}
\theoremstyle{definition}
\newtheorem{definition}[thm]{Definition}
\theoremstyle{remark}
\newtheorem{rmk}[thm]{Remark}
\newcommand{\fakephantomsection}{%
  \Hy@MakeCurrentHref{\@currenvir.\the\Hy@linkcounter}
  \Hy@raisedlink{\hyper@anchorstart{\@currentHref}\hyper@anchorend}%
  \Hy@GlobalStepCount\Hy@linkcounter%
}
\newcommand{\mc}{\mathcal}
\newcommand{\cA}{\mc A}
\newcommand{\cC}{\mc C}
\newcommand{\cE}{\mc E}
\newcommand{\cF}{\mc F}
\newcommand{\cG}{\mc G}
\newcommand{\cH}{\mc H}
\newcommand{\cI}{\mc I}
\newcommand{\cV}{\mc V}
\newcommand{\ms}{\mathscr}
\newcommand{\sS}{\ms S}
\newcommand{\HH}{\mathbb{H}}
\newcommand{\C}{\mathbb{C}}
\newcommand{\N}{\mathbb{N}}
\newcommand{\R}{\mathbb{R}}
\newcommand{\Z}{\mathbb{Z}}
\newcommand{\Sph}{\mathbb{S}}
\newcommand{\ran}{\operatorname{ran}}
\renewcommand{\Re}{\operatorname{Re}}
\renewcommand{\Im}{\operatorname{Im}}
\newcommand{\supp}{\operatorname{supp}}
\newcommand{\diag}{\operatorname{diag}}
\newcommand{\Ups}{\Upsilon}
\newcommand{\eps}{\epsilon}
\newcommand{\ff}{\mathrm{ff}}
\newcommand{\fff}{\mathrm{fff}}
\newcommand{\hra}{\hookrightarrow}
\newcommand{\la}{\langle}
\newcommand{\extcup}{\operatorname{\ol\cup}}
\newcommand{\ol}{\overline}
\newcommand{\pa}{\partial}
\newcommand{\dd}{{\mathrm d}}
\newcommand{\ra}{\rangle}
\newcommand{\Specb}{\operatorname{Spec}_\bop}
\newcommand{\wh}{\widehat}
\newcommand{\wt}{\widetilde}
\newcommand{\xra}{\xrightarrow}
\newcommand{\pfstep}[1]{$\bullet$\ \underline{\textit{#1}}}
\newcommand{\bop}{{\mathrm{b}}}
\newcommand{\scop}{{\mathrm{sc}}}
\newcommand{\scl}{{\mathrm{sc}}}
\newcommand{\lb}{{\mathrm{lb}}}
\newcommand{\rb}{{\mathrm{rb}}}
\newcommand{\bface}{{\mathrm{bf}}}
\newcommand{\cp}{{\mathrm{c}}}
\newcommand{\Diff}{\mathrm{Diff}}
\newcommand{\Psib}{\Psi_\bop}
\newcommand{\Diffsc}{\Diff_\scl}
\newcommand{\Omegab}{{}^{\bop}\Omega}
\newcommand{\Omegazero}{{}^0\Omega}
\newcommand{\KD}{\mathrm{KD}}
\newcommand{\Tzero}{{}^0T}
\newcommand{\half}{{\tfrac{1}{2}}}
\newcommand{\sigmazero}{{}^0\sigma}
\newcommand{\loc}{{\mathrm{loc}}}
\newcommand{\CI}{\cC^\infty}
\newcommand{\CIdot}{\dot\cC^\infty}
\newcommand{\CIc}{\cC^\infty_\cp}
\newcommand{\phg}{{\mathrm{phg}}}
\newcommand{\openbigpmatrix}[1]
  {%
    \def\@bigpmatrixsize{#1}%
    \addtolength{\arraycolsep}{-#1}%
    \begin{pmatrix}%
  }
\newcommand{\closebigpmatrix}
  {%
    \end{pmatrix}%
    \addtolength{\arraycolsep}{\@bigpmatrixsize}%
  }
\newlength{\enummargin}\setlength{\enummargin}{1.5em}
\newcommand{\usref}[1]{{\upshape\ref{#1}}}
\newcommand*{\fwbw}[1]{\expandafter\@fwbw\csname c@#1\endcsname}
\newcommand*{\@fwbw}[1]{\ifcase #1 \or {\rm fw}\or {\rm bw}\fi}
\AddEnumerateCounter{\fwbw}{\@fwbw}
\begin{document}

\title[Elliptic parametrices in the 0-calculus]{Elliptic parametrices in the 0-calculus of Mazzeo and Melrose}

\date{\today}

\begin{abstract}
  The purpose of this note is to spell out the details of the construction of parametrices for fully elliptic uniformly degenerate pseudodifferential operators on manifolds $X$ with boundary. Following the original work by Mazzeo--Melrose on the 0-calculus, the parametrices are shown to have (polyhomogeneous) conormal Schwartz kernels on the 0-double space, a resolution of $X^2$. The extended 0-double space introduced by Lauter plays a useful role in the construction.
\end{abstract}

\subjclass[2010]{Primary 35J75, Secondary 35A17, 35C20}
\keywords{uniformly degenerate operators, 0-calculus, elliptic parametrices, polyhomogeneous expansions}

\author{Peter Hintz}
\address{Department of Mathematics, ETH Z\"urich, R\"amistrasse 101, 8092 Z\"urich, Switzerland}
\email{peter.hintz@math.ethz.ch}

\maketitle


\section{Introduction}
\label{SI}

In this note, we present complete details for the construction of elliptic parametrices in the 0-calculus. The reader is assumed to have some basic familiarity with blow-up constructions and polyhomogeneous conormal distributions; we refer the reader to \cite{MelrosePushfwd,GrieserBasics,MelroseDiffOnMwc}, \cite[\S2A]{MazzeoEdge}, and further references throughout the paper for the relevant background.

Let $n\geq 1$, and let $X$ be a smooth $n$-dimensional manifold with boundary $\pa X\neq\emptyset$. Following Mazzeo--Melrose \cite{MazzeoMelroseHyp}, the Lie algebra of \emph{uniformly degenerate vector fields} (or \emph{0-vector fields}) is defined as
\[
  \cV_0(X):=\{ V\in\cV(X)\colon V=0\ \text{at}\ \pa X\}.
\]
In local coordinates $x\geq 0$ and $y\in\R^{n-1}$ near a point in $\pa X$, 0-vector fields are linear combinations of $x\pa_x$ and $x\pa_{y^j}$ ($j=1,\ldots,n-1$) with smooth coefficients. For $m\in\N$, the space $\Diff_0^m(X)$ of uniformly degenerate differential operators consists of all $m$-th order operators which are locally finite sums of up to $m$-fold compositions of 0-vector fields.

The spectral family $\Delta_{\HH^n}-\lambda$, $\lambda\in\C$, of the Laplace operator on the Poincar\'e disc model of hyperbolic n-space $\HH^n$ is an example of an elliptic element of $\Diff_0^2(X)$ where $X$ is the closed unit ball (the conformal compactification of the Poincar\'e disc). Indeed, in suitable local coordinates $x\geq 0$, $y\in\R^{n-1}$ corresponding to the upper half plane model of hyperbolic space, one has $\Delta_{\HH^n}-\lambda=(x D_x)^2+i(n-1)x D_x+\sum_{j=1}^{n-1}(x D_{y^j})^2-\lambda$. Writing $\lambda=\zeta(n-1-\zeta)$, Mazzeo--Melrose \cite{MazzeoMelroseHyp} study the meromorphic continuation of the resolvent $(\Delta_g-\zeta(n-1-\zeta))^{-1}$ in $\zeta$ from $\Re\zeta\gg 1$ for asymptotically hyperbolic metrics $g$ on compact manifolds $X$ with boundary; these are suitable variable coefficient generalizations of the exact hyperbolic metric. The core of \cite{MazzeoMelroseHyp} is the development of a calculus of 0-pseudodifferential operators in which rather precise approximate parametrices (or exact inverses if they exist) of the spectral family can be constructed.

The spectral family of asymptotically hyperbolic Laplacians features prominently in recent works on the asymptotic behavior of solutions of the wave equation on spacetimes equipped with suitable asymptotically Minkowskian metrics, such as the Lorentzian scattering metrics of \cite{BaskinVasyWunschRadMink,BaskinVasyWunschRadMink2}. Roughly speaking, when foliating the cone $t>r$ in an asymptotically Minkowski spacetime by the hyperboloidal level sets of $s=(t^2-r^2)^{1/2}$, the wave operator is, approximately, conformally related to the wave operator on the hyperbolic space obtained by restricting a rescaling of the spacetime metric to $s=s_0$ and letting $s_0\nearrow\infty$. On exact $(n+1)$-dimensional Minkowski space, the wave operator is indeed equal to $s^{-2}(-(s D_s)^2+i(n-1)s D_s+\Delta_{\HH^n})$. Upon passing to the Mellin transform in $s$, the properties of the spectral family (in particular the location of its resonances) determine the asymptotic behavior of waves in the forward light cone. In the works \cite{BaskinVasyWunschRadMink,BaskinVasyWunschRadMink2}, a direct analysis of the asymptotically hyperbolic resolvent can be avoided since the spacetimes of interest have a global approximate dilation-invariance in $(t,r)$, including near the light cone $t=r$. On the other hand, on asymptotically flat spacetimes that arise in the context of Einstein's vacuum equations in general relativity, dilation-invariance fails near the light cone, and a direct analysis of a hyperbolic resolvent becomes necessary when studying wave asymptotics in $t>r$. In \cite{HintzVasyMink4}, following \cite{VasyMicroKerrdS,VasyMinkDSHypRelation}, this was accomplished, in the context of a carefully designed (near the light cone) wave equation on symmetric 2-tensors, via an extension across the conformal boundary; see \cite{ZworskiRevisitVasy}, \cite[\S5]{DyatlovZworskiBook}, and \cite[\S3]{ZworskiResonanceReview} for detailed accounts of this procedure, and for some historical context of scattering theory on (asymptotically) hyperbolic manifolds. The present note arose out of the desire to relax the requirements on the wave equation near the light cone, in which case an extension across the boundary can no longer be performed; this is used in ongoing work on the analysis of the Einstein equation on asymptotically flat spacetimes, including Kerr and Schwarzschild spacetimes.

Here, we will revisit the construction of parametrices for arbitrary \emph{fully elliptic 0-pseudo\-differential operators}; these notions will be defined below. A construction of parametrices for edge differential operators---which generalize 0-differential operators---was given by Mazzeo \cite{MazzeoEdge}. As we will discuss below, Albin \cite{AlbinLectureNotes} described the 0-pseudodifferential case in some detail, and Lauter \cite{LauterPsdoConfComp} constructed rough parametrices in this generality as well. The detailed construction of a parametrix with a polyhomogeneous conormal Schwartz kernel in the present note requires a number of technical ingredients which appear here for the first time.

We now return to the general setup. The space $\cV_0(X)$ is equal to the space $\CI(X;\Tzero X)$ of smooth sections of the \emph{0-tangent bundle} $\Tzero X\to X$; in local coordinates $x\geq 0$, $y\in\R^{n-1}$ as above, a local frame for $\Tzero X$ is given by the vector fields $x\pa_x$, $x\pa_{y^j}$ ($j=1,\ldots,n-1$). Denote by $\Omegazero^\alpha X\to X$ the corresponding bundle of $\alpha$-densities, with local frame given by $|\frac{\dd x}{x}\frac{\dd y}{x^{n-1}}|^\alpha=|\frac{\dd x}{x}\frac{\dd y^1\cdots\dd y^{n-1}}{x^{n-1}}|^\alpha$.

With $\diag_{\pa X}=\{(p,p)\colon p\in\pa X\}\subset X$ denoting the boundary diagonal, the \emph{0-double space} is defined as the real blow-up
\[
  X^2_0 := [ X^2; \diag_{\pa X} ].
\]
The boundary hypersurfaces of $X^2_0$ are denoted $\lb$ (the lift of $\pa X\times X)$, $\rb$ (the lift of $X\times\pa X$) and $\ff$ (the front face). The lift of the diagonal $\diag_X\subset X$ to $X^2_0$ is the \emph{0-diagonal}
\[
  \diag_0 := \overline{\diag_X\setminus\diag_{\pa X}} \subset X^2_0;
\]
it is a p-submanifold, i.e.\ given in suitable local coordinates on $X^2_0$ by the vanishing of a subset of the coordinates. See Figure~\ref{FigI0}.

\begin{figure}[!ht]
\centering
\includegraphics{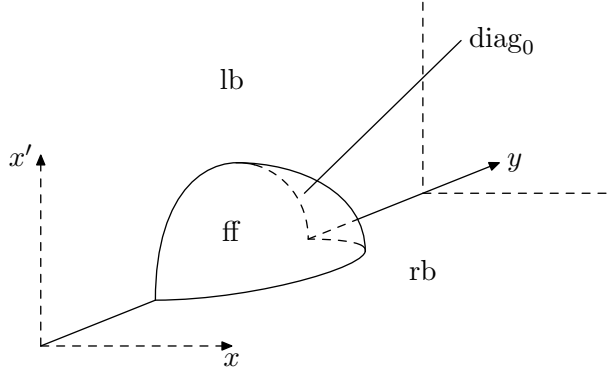}
\caption{The 0-double space $X^2_0$ in a local coordinate chart $x,y,x',y'$ with $y'$ fixed and suppressed, see also the notation in~\S\ref{SsCN}. The boundary diagonal which was blown up here is $(x,y,x',y')=(0,y',0,y')$.}
\label{FigI0}
\end{figure}

Schwartz kernels of 0-vector fields, or more generally of elements of $\Diff_0^m(X)$, lift to distributional right densities on $X^2_0$ which are conormal to $\diag_0$ smoothly (up to a weight factor) down to $\ff_0$; in fact, they are sums of differentiated $\delta$-distributions supported at $\diag_0$. Conversely, any such distribution is the Schwartz kernel of a 0-differential operator, as follows from the fact that the lift of $\cV_0(X)$ to the left factor of $X^2$ and then to $X^2_0$ is transversal to $\diag_0$, see \cite[(4.15)]{MazzeoMelroseHyp}. This fact also implies that one can identify
\begin{equation}
\label{EqIConormal}
  N^*\diag_0\cong\Tzero^* X.
\end{equation}

\begin{definition}[0-pseudodifferential operators]
\label{DefI0}
  Define the kernel density bundle
  \begin{equation}
  \label{EqIKD}
    \KD_0 := \pi_L^*(\Omegazero^{\frac12}X) \otimes \pi_R^*(\Omegazero^{\frac12}X),
  \end{equation}
  where $\pi_L$ and $\pi_R\colon X^2_0\to X$ are the lifts of the left and right projections $(p,q)\mapsto p$ and $(p,q)\mapsto q$. Then the space of 0-pseudodifferential operators (acting on 0-$\half$-densities, i.e.\ uniformly degenerate $\half$-densities, on $X$) is defined on the level of Schwartz kernels as
  \[
    \Psi_0^m(X,\Omegazero^{\frac12}X) := \{ \kappa \in I^m(X^2_0,\diag_0,\KD_0) \colon \kappa \equiv 0\ \text{at}\ \lb\cup\rb \}.
  \]
  Here, `$\equiv 0$' means vanishing in Taylor series, and $I^m$ is the space of conormal distributions of order $m$.
\end{definition}

In view of~\eqref{EqIConormal}, the principal symbol map for 0-ps.d.o.s fits into the short exact sequence\footnote{The principal symbol is valued in the bundle $\Omega_{\rm fiber}(N^*\diag_0)\otimes(\KD_0)|_{\diag_0}$, where $\Omega_{\rm fiber}(N^*\diag_0)$ is the bundle of translation-invariant densities along each fiber of $N^*\diag_0$. In view of~\eqref{EqIConormal}, we have a natural isomorphism $\Omega_{\rm fiber}(N^*\diag_0)\cong\Omega_{\rm fiber}(\Tzero^*X)$. Since $\Tzero X\cong N\diag_0$, we can moreover identify $(\KD_0)|_{\diag_0}\cong\Omegazero^{\frac12}(X)\otimes\Omega_{\rm fiber}^{\frac12}(\Tzero X)$. Altogether then, the principal symbol is valued in $\Omega_{\rm fiber}^{\frac12}(\Tzero^*X)\otimes\Omegazero^{\frac12}(X)$, which has a canonical nonvanishing section given by the symplectic form.}
\[
  0 \to \Psi_0^{m-1}(X,\Omegazero^{\frac12}X) \hra \Psi_0^m(X,\Omegazero^{\frac12}X) \xra{\sigmazero^m} (S^m/S^{m-1})(\Tzero^*X) \to 0.
\]

An operator $P\in\Psi_0^m(X,\Omegazero^{\frac12}X)$ is elliptic (in the symbolic sense) if its principal symbol $\sigmazero^m(P)$ is elliptic. In order for $P$ to be Fredholm on weighted function spaces, a further assumption on its behavior at $\pa X$ is needed; we proceed to introduce the relevant objects in the case that $P\in\Diff_0^m(X,\Omegazero^{\frac12}X)$ is a differential operator. In local coordinates, and fixing a trivialization of $\Omegazero^{\frac12}X$ (e.g.\ using the section $|\frac{\dd x}{x}\frac{\dd y}{x^{n-1}}|^{\frac12}$), we can write $P$ as
\begin{equation}
\label{EqIOp}
  P = \sum_{j+|\beta|\leq m} a_{j\beta}(x,y) (x D_x)^j (x D_y)^\beta.
\end{equation}
(Its principal symbol is $\sum_{j+|\beta|=m}a_{j\beta}(x,y)\xi^j\eta^\beta$, where $(\xi,\eta)\in{}^0 T^*_{(x,y)}X$.) Freezing the coefficients of $P$ at $(0,y_0)\in\pa X$ and exploiting the translation-invariance in $y$ of the resulting operator by passing to the Fourier transform in $y$ gives the \emph{transformed normal operator}
\begin{equation}
\label{EqIOpNorm}
  N(P,y_0,\tilde\eta) = \sum_{j+|\beta|\leq m} a_{j\beta}(0,y_0) (\tilde x D_{\tilde x})^j (\tilde x\tilde\eta)^\beta.
\end{equation}
We use $\tilde x$ here instead of $x$ as we consider $N(P,y_0,\tilde\eta)$ as acting on a model space $[0,\infty)_{\tilde x}$. The scaling invariance $(\tilde x,\tilde\eta)\mapsto(\lambda\tilde x,\tilde\eta/\lambda)$ for $\lambda>0$ of the operator~\eqref{EqIOpNorm} can be further exploited by passing to the \emph{reduced normal operator}
\begin{equation}
\label{EqIOpNormRed}
  \hat N(P,y_0,\hat\eta) = \sum_{j+|\beta|\leq m} a_{j\beta}(0,y_0) (t D_t)^j (t\hat\eta)^\beta,\qquad
  t = \tilde x|\tilde\eta|,\quad
  \hat\eta = \frac{\tilde\eta}{|\tilde\eta|}.
\end{equation}
On the compactified positive half line $[0,\infty]_t$ (i.e.\ using $T=t^{-1}$ as a defining function of $\infty$), this is a weighted b-scattering differential operator, i.e.\ near $t=0$ based on the b-vector field $t\pa_t$ \cite{MelroseAPS} and near $T=0$ on the scattering vector field $T^2\pa_T=-\pa_t$ \cite{MelroseEuclideanSpectralTheory} with an overall weight of $t^m=T^{-m}$. We record this as
\begin{equation}
\label{EqINormRescType}
  \hat N(P,y_0,\hat\eta) \in \Diff_{\bop,\scop}^{m,(0,m)}([0,\infty]),
\end{equation}
with smooth dependence on $(y_0,\hat\eta)\in\R^{n-1}\times\Sph^{n-2}$ (which can be identified with $S^*\pa X$, if desired, upon fixing a collar neighborhood of $\pa X$ and choosing a Riemannian metric on $X$). The operator~\eqref{EqINormRescType} is elliptic, including in the sense of decay at $T=0$.\footnote{In $T<1$, one has
\[
  T^m \hat N(P,y_0,\hat\eta)\equiv \sum_{j+|\beta|=m} a_{j\beta}(0,y_0)\hat\eta^\beta D_t^j\bmod T^{-m+1}\Diffsc^{m-1}([0,1)_T)\qquad (D_t=-T^2 D_T),
\]
which due to the (symbolic) ellipticity of $P$ shows that $\hat N(P,y_0,\hat\eta)$ is indeed fully elliptic as a weighted scattering differential operator.} Any tempered element in its nullspace is thus automatically smooth in $(0,\infty)$ and Schwartz as $t\to\infty$. 

Near $t=0$ on the other hand, the behavior of tempered elements of the kernel of the Fuchsian operator $\hat N(P,y_0,\hat\eta)$ is controlled by its \emph{indicial operator} (or b-normal operator)
\[
  I(P,y_0) := \sum_{j\leq m} a_{j 0}(0,y_0)(t D_t)^j
\]
and, via the Mellin transform in $t$, by the \emph{indicial family}
\[
  I(P,y_0,\sigma) := \sum_{j\leq m} a_{j 0}(0,y_0)\sigma^j,\qquad \sigma\in\C.
\]
(Note that the leading order coefficient $a_{m 0}(0,y_0)$ is nonzero.) As reflected in the notation, $I(P,y_0)$ and $I(P,y_0,\sigma)$ do not depend on $\hat\eta$. At each $y_0\in\pa X$, we define the boundary spectrum by
\begin{equation}
\label{EqISpecb}
  \Specb(P,y_0) := \{ (z,k) \in \C\times\N_0 \colon I(P,y_0,\sigma)^{-1}\ \emph{has a pole at $\sigma=-i z$ of order $\geq k+1$} \}.
\end{equation}
Elements in the nullspace of $\hat N(P,y_0,\hat\eta)$ have asymptotic expansions at $t=0$ involving terms\footnote{The introduction of the factor $i$ in~\eqref{EqISpecb} eliminates a factor of $i$ here.} $t^z(\log t)^k$ for $(z,k)\in\Specb(P,y_0)$ (as well as further terms with $z$ increased by integers, and possibly larger $k$ arising from integer coincidences, i.e.\ from the existence of $(z_1,0)$ and $(z_2,0)\in\Specb(P,y_0)$ with $z_2-z_1\in\Z\setminus\{0\}$).

In~\S\ref{SC}, we will discuss the definitions of the (reduced) normal operator and indicial operator/family and boundary spectrum for \emph{pseudo}differential operators $P\in\Psi_0^m(X,\Omegazero^{\frac12}X)$. In particular, in this generality $\hat N(P,y_0,\hat\eta)\in\Psi_{\bop,\scop}^{m,(0,m)}([0,\infty],\Omegab^{\frac12}[0,\infty])$ is an elliptic weighted b-scattering-pseudodifferential operator.

\begin{definition}[Invertibility of the reduced normal operator]
\label{DefIInv}
  Let $\alpha\in\R$, and let $P\in\Psi_0^m(X,\Omegazero^{\frac12}X)$ be an operator with elliptic principal symbol. We say that $\hat N(P,y_0,\hat\eta)$ is \emph{invertible at the weight $\alpha$} if the following three conditions hold:
  \begin{enumerate}
  \item $\alpha\neq\Re z$ for all $(z,k)\in\Specb(P,y_0)$.
  \item If $u\in t^\alpha L^2([0,\infty],|\frac{\dd t}{t}|)$ solves $\hat N(P,y_0,\hat\eta)u=0$, then $u=0$.
  \item If $v\in t^{-\alpha}L^2([0,\infty],|\frac{\dd t}{t}|)$ solves $\hat N(P,y_0,\hat\eta)^*v=0$, then $v=0$. Here, the adjoint is defined with respect to the volume density $|\frac{\dd t}{t}|$, or more generally with respect to any polynomially weighted (at infinity) b-volume density $a(t)(1+t)^{-w}|\frac{\dd t}{t}|$ where $0<a\in\CI([0,\infty])$, $w\in\R$.
  \end{enumerate}
  If $\hat N(P,y_0,\hat\eta)$ is invertible at the weight $\alpha$ for all $y_0,\hat\eta$, then we say that $P$ is \emph{fully elliptic at the weight $\alpha$}.
\end{definition}

\begin{rmk}[Invertibility as an operator between Sobolev spaces]
\label{RmkISob}
  The invertibility at the weight $\alpha$ can be phrased slightly more naturally as the invertibility of $\hat N(P,y_0,\hat\eta)$ acting between suitable weighted b-scattering-Sobolev spaces,
  \[
    \hat N(P,y_0,\hat\eta) \colon H_{\bop,\scop}^{s,\alpha,r}([0,\infty])=\Bigl(\frac{t}{t+1}\Bigr)^\alpha(t+1)^{-r}H_{\bop,\scop}^s\Bigl([0,\infty];\Bigl|\frac{\dd t}{t}\Bigr|\Bigr)\to H_{\bop,\scop}^{s-m,\alpha,r-m}([0,\infty]),
  \]
  for any $s,r\in\R$. (By ellipticity, the invertibility is independent of the choice of $s,r$.) See (the proof of) Proposition~\ref{PropPNInv}.
\end{rmk}

In order to state the main result, we need to introduce the \emph{large 0-calculus}. Recall first that an \emph{index set} is a subset $\cE\subset\C\times\N_0$ with the property that $(z,k)\in\cE$ implies $(z+1,k)\in\cE$ and (when $k\geq 1$) $(z,k-1)\in\cE$, and so that moreover for any $C$ the set $\{(z,k)\in\cE\colon\Re z<C\}$ is finite.\footnote{Other conventions, with factors of $\pm i$ in the first component, are also frequently used in the literature. Our present convention is chosen to match~\eqref{EqISpecb}.} Recall moreover that spaces of polyhomogeneous conormal distributions are local, and it suffices to define the space $\cA_\phg^{(\cE_1,\ldots,\cE_k)}([0,\infty)_x^k\times\R_y^{n-k})$ for index sets $\cE_1,\ldots,\cE_k\in\C\times\N_0$. This space can be defined by induction over $k$ \cite[\S4.13]{MelroseDiffOnMwc}, and consists of distributions which at $x_1=0$ are asymptotic sums of terms $x_1^z(\log x_1)^k a_{z,k}(x',y)$, $(z,k)\in\cE_1$, $x'=(x_2,\ldots,x_n)$, where the $a_{z,k}$ themselves lie in $\cA_\phg^{(\cE_2,\ldots,\cE_k)}([0,\infty)_{x'}^{k-1}\times\R_y^{n-k})$; similarly at all other boundary hypersurfaces $x_j=0$.

\begin{definition}[Full 0-calculus]
\label{DefIFull}
  For a collection $\cE=(\cE_\lb,\cE_\ff,\cE_\rb)$ of index sets $\cE_\lb$, $\cE_\ff$, $\cE_\rb\subset\C\times\N_0$, we define the space of \emph{residual operators}
  \[
    \Psi_0^{-\infty,\cE}(X) := \cA_\phg^\cE(X^2_0,\KD_0),
  \]
  with the index set $\cE_H$ associated with the boundary hypersurface $H\subset X^2_0$. More generally for $m\in\R\cup\{-\infty\}$, put
  \[
    \Psi_0^{m,\cE}(X) := \cA_\phg^\cE I^m(X_0^2,\diag_0,\KD_0),
  \]
  i.e.\ where the conormal distribution to $\diag_0$ has coefficients which are polyhomogeneous down to $\ff$ with index set $\cE_\ff$. For $\cE'=(\cE_0,\cE_1)$, we moreover define the space of \emph{fully residual operators}
  \[
    \Psi^{-\infty,\cE'}(X) := \cA_\phg^{\cE'}(X^2,\KD_0),
  \]
  with the index set $\cE_0$, resp.\ $\cE_1$ associated with the boundary hypersurface $\pa X\times X$, resp.\ $X\times\pa X$ of $X\times X$.
\end{definition}

Recall also that the sum and extended union of two index sets $\cE,\cF$ are defined as
\begin{align*}
  \cE+\cF &:= \{(z_1+z_2,k_1+k_2) \colon (z_1,k_1)\in\cE,\ (z_2,k_2)\in\cF \}, \\
  \cE\extcup\cF &:= \cE\cup\cF\cup\{(z,k_1+k_2+1)\colon(z,k_1)\in\cE,\ (z,k_2)\in\cF\}.
\end{align*}
We moreover set $\cE+j=\cE+\{(j',0)\colon j'\in\N_0,\ j'\geq j\}$. With this setup, our main result is:

\begin{thm}[Parametrices of fully elliptic 0-ps.d.o.s]
\label{ThmI}
  Let $P\in\Psi_0^m(X,\Omegazero^{\frac12}X)$ be fully elliptic at the weight $\alpha$, and assume that $\Specb(P,y_0)$ is independent of $y_0\in\pa X$; put $\Specb(P):=\Specb(P,y_0)$. Denote by $\cE_\pm\subset\C\times\N_0$ the smallest index sets\footnote{The existence of these index sets is proved in Corollary~\ref{CorCNInd}.} with
  \begin{equation}
  \label{EqIEpm}
  \begin{split}
    \cE_+ &\supset \{ (z,k) \colon (z,k)\in\Specb(P),\ \Re z>\alpha \}, \\
    \cE_- &\supset \{ (-z,k) \colon (z,k)\in\Specb(P),\ \Re z<\alpha \}.
  \end{split}
  \end{equation}
  (Thus, $\Re\cE_+>\alpha$, i.e.\ $\Re z>\alpha$ for all $(z,k)\in\cE_+$, and $\Re\cE_->-\alpha$.) Set $\wh{\cE_\pm}(0):=\cE_\pm$ and $\wh{\cE_\pm}(j):=\cE_\pm\extcup\,(\wh{\cE_\pm}(j-1)+1)$ for $j=1,2,3,\ldots$, and let $\wh{\cE_\pm}:=\bigcup_{j=0}^\infty \wh{\cE_\pm}(j)$. Define similarly $\wh{\cE_\pm^\flat}:=\wh{\cE_\pm}\extcup\,(\wh{\cE_\pm}+1)\extcup\cdots$, and put $\wh{\cE_\pm^\sharp}:=\wh{\cE_\pm^\flat}\extcup\,(\wh{\cE_\pm}+1)$, $\wh{\cE_\ff^\pm}:=\N_0\extcup\,(\wh{\cE_\mp}+\wh{\cE_\pm^\flat}+(n-1))$. Then there exist parametrices
  \begin{align*}
    Q&\in\Psi_0^{-m}(X,\Omegazero^{\frac12}X)+\Psi_0^{-\infty,(\wh{\cE_+},\wh{\cE_\ff^-},\wh{\cE_-^\sharp}+(n-1))}(X,\Omegazero^{\frac12}X), \\
    Q'&\in\Psi_0^{-m}(X,\Omegazero^{\frac12}X)+\Psi_0^{-\infty,(\wh{\cE_+^\sharp},\wh{\cE_\ff^+},\wh{\cE_-}+(n-1))}(X,\Omegazero^{\frac12}X),
  \end{align*}
  so that
  \begin{alignat*}{2}
    P Q &= I-R, &\qquad R&\in\Psi^{-\infty,(\emptyset,\wh{\cE_-^\flat}+(n-1))}(X,\Omegazero^{\frac12}X), \\
    Q' P &= I-R', &\qquad R'&\in\Psi^{-\infty,(\wh{\cE_+^\flat},\emptyset)}(X,\Omegazero^{\frac12}X).
  \end{alignat*}
\end{thm}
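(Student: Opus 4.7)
The construction follows the classical three-stage approach for geometric pseudodifferential calculi on manifolds with boundary, adapted to the 0-calculus as in \cite{MazzeoMelroseHyp}: symbolic parametrix in the small calculus, fiberwise inversion of the reduced normal operator at the front face, and iterative solution of the indicial equation at $\lb$ and $\rb$. I describe the construction of $Q$; the left parametrix $Q'$ is obtained symmetrically (or by taking formal adjoints and exchanging the roles of $\lb$ and $\rb$).

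\emph{Symbolic step and front face inversion.} Using the ellipticity of $\sigmazero^m(P)$, the standard elliptic symbol calculus yields $Q_0\in\Psi_0^{-m}(X,\Omegazero^{\frac12}X)$ with $P Q_0=I-R_0$, $R_0\in\Psi_0^{-\infty}(X,\Omegazero^{\frac12}X)$; the kernel of $R_0$ is a smooth section of $\KD_0$ on $X^2_0$, vanishing to infinite order at $\lb\cup\rb$, and only smooth down to $\ff$. To remove its leading behavior at $\ff$, I identify $R_0|_\ff$ with a smooth family of right densities on the half-space fibers of $\ff\to\pa X$, and invert them fiberwise using $\hat N(P,y_0,\hat\eta)^{-1}$, which exists as a polyhomogeneous weighted b-scattering pseudodifferential operator on $[0,\infty]$ by the full ellipticity assumption together with Proposition~\ref{PropPNInv}. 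Quantizing this family back to $X^2_0$ --- where Lauter's extended 0-double space provides a convenient framework to encode both the $(y_0,\hat\eta)\in S^*\pa X$ dependence and the b-scattering structure in the fiber variable --- produces $Q_1$ with $P(Q_0+Q_1)=I-R_1$, where $R_1$ is polyhomogeneous on $X^2_0$ with $\lb$- and $\rb$-index sets inherited from $\hat N^{-1}$ (essentially $\cE_+$ and $\cE_-+(n-1)$, the shift reflecting the 0-density normalization on the right factor).

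\emph{Indicial iteration, asymptotic summation, and main obstacle.} The error $R_1$ still has polyhomogeneous expansions at $\lb$ with leading exponents in $\cE_+$. I proceed inductively: if the current leading $\lb$-behavior is $x^z(\log x)^k a(y,x',y')$, then applying $P$ from the left gives, to leading order in $x$, the same power multiplied by $I(P,y,-iz)$ acting on $a$; since $\Re z>\alpha$ and --- except when a shifted exponent meets $\Specb(P)$ --- $(z,k)\notin\Specb(P)$, the indicial family is invertible and I cancel the leading term with a correction whose $\lb$-expansion lies in the same class. Each iteration shifts the exponent up by one, and whenever the shift hits $\Specb(P)$ the inversion of the indicial family introduces extra log factors; this is precisely what the extended-union construction $\wh{\cE_+}=\bigcup_j\wh{\cE_+}(j)$ encodes, and by Corollary~\ref{CorCNInd} it stabilizes as a bona fide index set. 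Composition of $P$ with the accumulated corrections --- computed on the triple 0-space via the pushforward theorem of \cite{MelrosePushfwd} --- propagates the new $\lb$-expansions into the $\ff$- and $\rb$-asymptotics, generating $\wh{\cE_\ff^-}=\N_0\extcup(\wh{\cE_-}+\wh{\cE_+^\flat}+(n-1))$ at $\ff$ and $\wh{\cE_-^\sharp}+(n-1)$ at $\rb$; the additional integer shifts in $\wh{\cE_\pm^\flat}$ and $\wh{\cE_\pm^\sharp}$ reflect subleading terms accumulated over multiple iterations and the fact that composition with $P$ couples leading and subleading behaviors. A Borel lemma for polyhomogeneous conormal distributions then sums the corrections to give $Q$ of the stated type with $PQ=I-R$, where $R$ vanishes to infinite order at $\ff$ --- hence lifts to a polyhomogeneous kernel on $X^2$ --- with trivial index set at $\pa X\times X$ and index set $\wh{\cE_-^\flat}+(n-1)$ at $X\times\pa X$. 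The technical heart of the argument, and expected main obstacle, lies in this iterated bookkeeping: one must track extended unions, integer shifts, and the distinctions between $\wh{\cE_\pm}$, $\wh{\cE_\pm^\flat}$, $\wh{\cE_\pm^\sharp}$ through repeated compositions with $P$ on the triple 0-space, and verify that under the stated hypotheses these index sets close up precisely as claimed and cannot be sharpened.
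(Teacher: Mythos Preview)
Your overall architecture is right and matches the paper's: symbolic parametrix, inversion of the reduced normal operator via Proposition~\ref{PropPNInv}, then indicial-operator arguments. But there is a genuine gap in the final stage, and it is precisely where the index sets $\wh{\cE_-^\flat}$, $\wh{\cE_-^\sharp}$, $\wh{\cE_\ff^-}$ actually originate.

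After your indicial iteration at $\lb$ and Borel summation of the resulting corrections, the error $R_2=I-P(Q_0+Q_1+Q_2)$ has \emph{trivial} index set at $\lb$ but only lies in $\Psi_0^{-\infty,(\emptyset,\N_0+1,\wh{\cE_-}+(n-1))}$: it vanishes merely to first order at $\ff$, not to infinite order. The $\lb$-iteration does nothing further for the $\ff$-behavior. To make the remainder vanish to infinite order at $\ff$---so that it descends to a fully residual operator on $X^2$---the paper inserts an additional step you do not mention: an asymptotic Neumann series $\tilde R_2\sim\sum_{j\geq 1}R_2^j$, and then sets $Q=(Q_0+Q_1+Q_2)(I+\tilde R_2)$. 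It is this \emph{self-composition} of $R_2$, not ``composition of $P$ with the accumulated corrections'', that produces the repeated extended unions at $\rb$ defining $\wh{\cE_-^\flat}$; and it is the subsequent composition $(Q_0+Q_1+Q_2)\circ(I+\tilde R_2)$, computed via Proposition~\ref{PropCC0}, that generates $\wh{\cE_\ff^-}$ at $\ff$ and $\wh{\cE_-^\sharp}+(n-1)$ at $\rb$ for $Q$. Your account attributes these to the wrong mechanism and would, as written, stop with an error that is still nontrivial at $\ff$. (Incidentally, your displayed formula for $\wh{\cE_\ff^-}$ has the $\pm$ swapped: by the theorem's definition $\wh{\cE_\ff^-}=\N_0\extcup(\wh{\cE_+}+\wh{\cE_-^\flat}+(n-1))$.)

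A secondary point: you locate the ``technical heart'' in the index-set bookkeeping, but the paper is explicit that the hard work sits inside Proposition~\ref{PropPNInv}, specifically in performing the b-parametrix construction for $\hat N(P,y,\hat\eta)$ while \emph{remaining in the range of the reduced normal operator map}. You invoke that proposition as a black box, which is fine for a proof of Theorem~\ref{ThmI} proper, but you should be aware that this is where the nontrivial content lies; the subsequent bookkeeping, once the Neumann-series step is included, is routine.
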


For compact $X$, this implies that $P$ is Fredholm as a map between weighted 0-Sobolev spaces $\rho^\alpha H_0^s(X,\Omegazero^{\frac12}X)\to\rho^\alpha H_0^{s-m}(X,\Omegazero^{\frac12}X)$ (with $\rho\in\CI(X)$ a boundary defining function). Its generalized inverse moreover has the same structure as the parametrices in Theorem~\ref{ThmI} with slightly different index sets at $\lb$ and $\rb$; see Remark~\ref{RmkPrb} and Corollary~\ref{CorPGen}. The index sets of $Q$, resp.\ $Q'$ at $\ff$ and $\rb$, resp.\ $\lb$ are likely much bigger than needed in order for the error terms $R,R'$ to be fully residual with trivial index set at one of the two boundaries of $X^2$; we do not aim to optimize them here.

When $P$ is an element of the spectral family of the Laplacian on an asymptotically hyperbolic manifold, with spectral parameter lying in the resolvent set, Theorem~\ref{ThmI} gives the same information on the resolvent kernel as \cite{MazzeoMelroseHyp} (except with less precise index sets). Note however that the meromorphic continuation of the resolvent in the spectral parameter (away from an exceptional set \cite{GuillarmouMeromorphic}) proved in \cite{MazzeoMelroseHyp} goes far beyond Theorem~\ref{ThmI}, since the parametrix construction for individual 0-operators provided by Theorem~\ref{ThmI} cannot distinguish between the cases that the spectral parameter lies in the resolvent set or in the continuation region.

Albin's lecture notes \cite{AlbinLectureNotes} give a rather detailed account of the proof of Theorem~\ref{ThmI}; we largely follow the arguments of \cite[\S5.5]{AlbinLectureNotes} and supplement them with the necessary auxiliary technical results. The first step of the parametrix construction is purely symbolic, i.e.\ uses the invertibility of the principal symbol of $P$. The next step is the inversion of the reduced normal operator; showing that the inverse is itself in the range of the reduced normal operator is a subtle task due to the absence of a simple description of the range of the reduced normal operator map in general. See Proposition~\ref{PropPNInv}. The main work in this step is the construction of left and right parametrices of the reduced normal operator; the nontrivial part is the full parametrix construction in the b-calculus \cite{MelroseAPS} while remaining in the range of the reduced normal operator map. Given the normal operator inverse, the construction of a full right 0-parametrix follows a standard procedure (solving away the remaining error to infinite order at $\lb$ using indicial operator arguments, and then using an asymptotic Neumann series to solve away the remaining error at $\ff$ to infinite order).

When the boundary spectrum $\Specb(P,y_0)$ does depend on $y_0$, Schwartz kernels of similarly precise parametrices are no longer polyhomogeneous, but one can still prove their conormality (with rather sharp bounds) on $X^2_0$ under a mild gap condition on the boundary spectra; see Theorem~\ref{ThmB}. In this generality, Lauter \cite{LauterPsdoConfComp} constructs a parametrix on the extended 0-double space which is sufficiently precise to deduce Fredholm properties of $P$. (In the notation of Theorem~\ref{ThmI}, Lauter's error terms $R_L$ and $R_R$ are residual operators which vanish to some small positive order at the hypersurfaces $\ff'\cup\ff_\bop$ of the extended 0-double space, cf.\ Figure~\ref{FigCExt}.)

\begin{rmk}[Bundles]
  We shall only consider operators acting between $\half$-densities. The analysis of operators acting between sections on bundles requires purely notational changes.
\end{rmk}

\begin{rmk}[Generalizations]
  A detailed parametrix construction for fully elliptic pseudodifferential operators in Mazzeo's edge calculus \cite{MazzeoEdge} can be given in much the same way as in the present note. In particular, the reduced normal operator is still a b-scattering operator, just not on $[0,\infty]$ but rather on the product of $[0,\infty]$ with the typical fiber of the boundary fibration.
\end{rmk}

\subsection*{Acknowledgments}

I gratefully acknowledge support from the U.S.\ National Science Foundation under Grant No.\ DMS-1955416 and from a Sloan Research Fellowship.

\section{The (extended) 0-calculus}
\label{SC}

The (large) 0-calculus was already introduced in Definitions~\ref{DefI0} and \ref{DefIFull}. Following \cite{LauterPsdoConfComp}, defining \emph{extended 0-calculus} requires a further resolution of the 0-double space:

\begin{definition}[Extended 0-double space]
\label{DefCExt}
  The extended 0-double space\footnote{This is denoted $X^2_{0,e}$ in \cite{LauterPsdoConfComp}; we use a slightly different notation here to avoid confusion with the notation for the edge double space in \cite{MazzeoEdge}.} $X^2_{0'}$ is defined as the iterated blow-up
  \[
    X^2_{0'} := \bigl[ X^2; (\pa X)^2; \diag_{\pa X} \bigr].
  \]
  Its boundary hypersurfaces are denoted $\lb'$ (lift of $\pa X\times X$), $\ff'$ (lift of $\diag_{\pa X}$), $\rb'$ (lift of $X\times\pa X$), and $\ff_\bop$ (lift of $(\pa X)^2$).
\end{definition}

See Figure~\ref{FigCExt}.

\begin{figure}[!ht]
\centering
\includegraphics{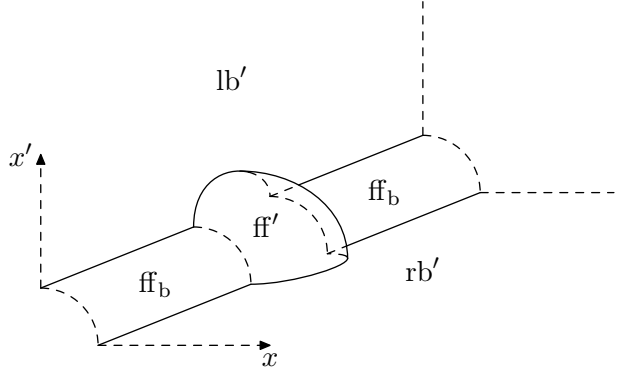}
\caption{The extended 0-double space $X^2_{0'}$.}
\label{FigCExt}
\end{figure}

Since $\diag_{\pa X}\subset(\pa X)^2$, the order of the two blow-ups is immaterial in the sense that the identity map on $(X^\circ)^2$ extends to a diffeomorphism
\[
  [X^2;(\pa X)^2;\diag_{\pa X}]\xra{\cong} [X^2;\diag_{\pa X};(\pa X)^2].
\]
Thus, $X^2_{0'}$ is the blow-up of $X^2_0$ along the lift of $(\pa X)^2$; moreover, $\ff'$ is the blow-up of $\ff$ at the corner $\ff\cap(\lb\cup\rb)$.

Since $X^2_0$ and $X^2_{0'}$ are naturally diffeomorphic near $\diag_0$, we can equivalently define
\begin{equation}
\label{EqCExtPsi}
  \Psi_0^m(X,\Omegazero^{\frac12}X) := \{ \kappa \in I^m(X^2_{0'},\diag_0,\KD_0) \colon \kappa\equiv 0\ \text{at}\ \lb'\cup\ff_{\bop}\cup\rb' \}.
\end{equation}
Here, we abuse notation and write $\KD_0$ also for its lift to $X^2_{0'}$. Spaces of residual operators lift according to
\begin{equation}
\label{EqCRel}
  \Psi_0^{-\infty,(\cE_\lb,\cE_\ff,\cE_\rb)}(X,\Omegazero^{\frac12}X) \subset \Psi_{0'}^{-\infty,(\cE_\lb,\cE_\ff,\cE_\lb+\cE_\rb,\cE_\rb)}(X,\Omegazero^{\frac12}X),
\end{equation}
where the right hand side consist of polyhomogeneous Schwartz kernels with the stated index sets at $\lb'$, $\ff'$, $\ff_\bop$, and $\rb'$ (in this order). Conversely, we note the inclusion
\[
  \Psi_{0'}^{-\infty,(\cE_\lb,\cE_\ff,\emptyset,\cE_\rb)}(X,\Omegazero^{\frac12}X) \subset \Psi_0^{-\infty,(\cE_\lb,\cE_\ff,\cE_\rb)}(X,\Omegazero^{\frac12}X).
\]

We shall also consider less precise classes of residual operators whose Schwartz kernels are merely conormal at some of the boundary hypersurfaces. For a general manifold with corners $M$ and embedded boundary hypersurfaces $H_1$, $\ldots$, $H_N$ with boundary defining functions $\rho_{H_1}$, $\ldots$, $\rho_{H_N}$, we denote by
\[
  \cA^{(\alpha_1,\ldots,\alpha_N)}(M)\qquad (\alpha_1,\ldots,\alpha_N\in\R)
\]
the space of elements of $(\prod_{j=1}^N\rho_{H_j}^{\alpha_j})L_\loc^\infty(M)$ which remain in this space under application of any number of b-vector fields (smooth vector fields on $M$ tangent to all $H_j$). More generally, let $\cH=\{H_i\colon i\in I\}\subset\{H_1,\ldots,H_N\}$ denote a subset of boundary hypersurfaces, where $I\subset\{1,\ldots,N\}$. Consider weights $\beta=(\beta_i\colon i\in I)$ where $\beta_i\in\R$; then
\[
  \cA_\cH^\beta(M)
\]
consists of those elements of $(\prod_{i\in I}\rho_{H_i}^{\beta_i})L_\loc^\infty(M)$ which remain in this space upon application of any number of smooth vector fields on $M$ which are tangent to all elements of $\cH$.

For weights $\alpha_\lb,\alpha_\ff,\alpha_\rb\in\R\cup\{\infty\}$, we now let
\[
  \Psi_0^{m,(\alpha_\lb,\alpha_\rb)}(X,\Omegazero^{\frac12}X) := \Psi_0^m(X,\Omegazero^{\frac12}X) + \Psi_0^{-\infty,(\alpha_\lb,\alpha_\rb)}(X,\Omegazero^{\frac12}X),
\]
where
\begin{align*}
  \Psi_0^{-\infty,(\alpha_\lb,\alpha_\rb)}(X,\Omegazero^{\frac12}X) &:= \cA_{\{\lb,\rb\}}^{(\alpha_\lb,\alpha_\rb)}(X^2_0,\KD_0), \\
  \Psi_0^{-\infty,(\alpha_\lb,\alpha_\ff,\alpha_\rb)}(X,\Omegazero^{\frac12}X) &:= \cA^{(\alpha_\lb,\alpha_\ff,\alpha_\rb)}(X^2_0,\KD_0).
\end{align*}
The \emph{0-calculus with bounds} is then the space of all operators in
\begin{align*}
  \Psi_0^{m,(\alpha_\lb,\alpha_\ff,\alpha_\rb)}(X,\Omegazero^{\frac12}X) &:= \Psi_0^m(X,\Omegazero^{\frac12}X) + \Psi_0^{-\infty,(\alpha_\lb,\alpha_\rb)}(X,\Omegazero^{\frac12}X)\\
    &\hspace{10em} + \Psi_0^{-\infty,(\alpha_\lb,\alpha_\ff,\alpha_\rb)}(X,\Omegazero^{\frac12}X)
\end{align*}
for $m,\alpha_\lb,\alpha_\ff,\alpha_\rb\in\R$. Very residual operators have Schwartz kernels in
\[
  \Psi^{-\infty,(\alpha_0,\alpha_1)}(X,\Omegazero^{\frac12}X) := \cA^{(\alpha_0,\alpha_1)}(X^2,\KD_0).
\]

\subsection{Composition}
\label{SsCC}

For calculations with polyhomogeneous conormal distributions, it is convenient to work with b-densities. We have a natural isomorphism of weighted density bundles $\Omegazero X\cong\rho^{-(n-1)}\,\Omegab X$ where $\rho\in\CI(X)$ is a boundary defining function. If we denote by $\rho_H$ a smooth defining function of a hypersurface $H$, then since $\Omegab X^2$ lifts to $X^2_0$ as $\rho_\ff^{n-1}\,\Omegab X^2_0$, we have
\begin{equation}
\label{EqCDensities}
  \KD_0 \cong \rho_\lb^{-\frac{n-1}{2}}\rho_\rb^{-\frac{n-1}{2}}\rho_\ff^{-\frac{n-1}{2}}\,\Omegab^{\frac12}X^2_0 \cong \rho_{\lb'}^{-\frac{n-1}{2}}\rho_{\rb'}^{-\frac{n-1}{2}}\rho_{\ff'}^{-\frac{n-1}{2}}\rho_{\ff_\bop}^{-\frac{n-1}{2}}\,\Omegab^{\frac12} X^2_{0'}.
\end{equation}

In the 0-calculus, we need the full range of composition results.

\begin{prop}[Composition of 0-ps.d.o.s]
\label{PropCC0}
  Let $\cE=(\cE_\lb,\cE_\ff,\cE_\rb)$ and $\cF=(\cF_\lb,\cF_\ff,\cF_\rb)$ be two collections of index sets, and put $\cG=(\cG_\lb,\cG_\ff,\cG_\rb)$ where
  \begin{gather*}
    \cG_\lb = \cE_\lb \extcup\,(\cE_\ff+\cF_\lb), \qquad
    \cG_\rb = (\cE_\rb+\cF_\ff) \extcup \cF_\rb, \\
    \cG_\ff = (\cE_\ff+\cF_\ff) \extcup\,(\cE_\lb+\cF_\rb).
  \end{gather*}
  Suppose that $\Re(\cE_\rb+\cF_\lb)>n-1$. Then for $m,m'\in\R\cup\{-\infty\}$, we have
  \[
    \Psi_0^{m,\cE}(X,\Omegazero^{\frac12}X) \circ \Psi_0^{m',\cF}(X,\Omegazero^{\frac12}X) \subset \Psi_0^{m+m',\cG}(X,\Omegazero^{\frac12}X).
  \]
\end{prop}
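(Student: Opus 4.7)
The plan is to realize the composition as a pushforward along the 0-triple space, following the standard strategy of Mazzeo--Melrose \cite{MazzeoMelroseHyp}. Using the identification \eqref{EqCDensities}, I first pass to polyhomogeneous conormal b-half-densities so that all distribution-theoretic manipulations are in the standard framework for manifolds with corners. I then define the \emph{0-triple space}
\[
  X^3_0 := \bigl[ X^3;\ \diag_{\pa X}^{(3)};\ D_{12};\ D_{13};\ D_{23} \bigr],
\]
where $\diag_{\pa X}^{(3)} := \{(p,p,p):p\in\pa X\}$ and $D_{ij} := \{(p_1,p_2,p_3):p_i = p_j \in \pa X\}$ (the order of the last three blow-ups is immaterial since the $D_{ij}$ all meet cleanly). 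Its boundary hypersurfaces are three simple faces $B_1, B_2, B_3$ (from $\{p_i\in\pa X\}$), three double front faces $\ff_{12}, \ff_{13}, \ff_{23}$ (from the $D_{ij}$), and one triple front face $\ff_T$ (from $\diag_{\pa X}^{(3)}$). A blow-up diagram analysis shows that each of the three projections $\pi_{ij}\colon X^3\to X^2$ onto factors $i,j$ lifts to a b-fibration $\pi_{ij}\colon X^3_0\to X^2_0$.

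The composition is then realized as
\[
  \kappa_{A\circ B} = (\pi_{13})_* \bigl( \pi_{12}^*\kappa_A\cdot\pi_{23}^*\kappa_B \bigr).
\]
The product of the two pullbacks is well-defined as a distribution on $X^3_0$ because the lifted diagonals $\diag_0^{(12)}, \diag_0^{(23)} \subset X^3_0$ (the lifts of $\diag_0$ under $\pi_{12}, \pi_{23}$) intersect each other cleanly along the triple diagonal $\diag_0^{(123)} = \{p_1 = p_2 = p_3\}$ and meet the boundary hypersurfaces transversally. Under $\pi_{13}$, the image of $\diag_0^{(123)}$ is exactly $\diag_0\subset X^2_0$, while $\pi_{13}|_{\diag_0^{(12)}}$ and $\pi_{13}|_{\diag_0^{(23)}}$ are submersions onto $X^2_0$; consequently the pushforward carries a conormal singularity of order $m+m'$ at $\diag_0$, reproducing the interior symbolic composition, and is polyhomogeneous at the boundary of $X^2_0$.

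To extract the boundary index sets I tabulate the images of each hypersurface of $X^3_0$ under $\pi_{12}$ and $\pi_{23}$; the index set of the product at each face is the sum of the contributions from $\kappa_A$ and $\kappa_B$:
\[
  \begin{array}{c|ccccccc}
    \text{face of } X^3_0 & B_1 & B_2 & B_3 & \ff_{12} & \ff_{23} & \ff_{13} & \ff_T \\ \hline
    \text{image under }\pi_{12} & \lb & \rb & - & \ff & \rb & \lb & \ff \\
    \text{image under }\pi_{23} & - & \lb & \rb & \lb & \ff & \rb & \ff \\
    \text{combined index} & \cE_\lb & \cE_\rb+\cF_\lb & \cF_\rb & \cE_\ff+\cF_\lb & \cE_\rb+\cF_\ff & \cE_\lb+\cF_\rb & \cE_\ff+\cF_\ff
  \end{array}
\]
Under $\pi_{13}$ the faces $B_1, \ff_{12}$ map to $\lb$; $B_3, \ff_{23}$ to $\rb$; $\ff_{13}, \ff_T$ to $\ff$; and only $B_2$ maps into the interior of $X^2_0$ and so is integrated out. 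Melrose's pushforward theorem \cite{MelrosePushfwd} then yields a polyhomogeneous distribution on $X^2_0$ whose index set at each $G\in\{\lb,\ff,\rb\}$ is the extended union of the indices on $\pi_{13}^{-1}(G)$; these are exactly $\cG_\lb$, $\cG_\ff$, $\cG_\rb$.

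The convergence hypothesis $\Re(\cE_\rb+\cF_\lb)>n-1$ is precisely what is needed for the fiber integral at $B_2$ to converge absolutely: the 0-density conventions introduce a shift of $n-1$ relative to b-densities (since the volume form is $\dd r/r^n$ near $r=0$), so the b-integrability threshold $\Re>0$ becomes $\Re>n-1$ in 0-density notation. The main work lies in the geometric verification that each $\pi_{ij}$ is a b-fibration through the four iterated blow-ups, and in confirming the clean/transverse intersection behavior of $\diag_0^{(12)}$ and $\diag_0^{(23)}$ near $\ff_T$; both are routine but intricate checks in projective coordinates. The remainder is a bookkeeping application of the pullback and pushforward theorems for polyhomogeneous b-half-densities.
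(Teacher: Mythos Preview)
Your proposal is correct and follows essentially the same triple-space pullback/pushforward strategy as the paper, with only notational differences (your $\pi_{ij}$, $B_i$, $\ff_{ij}$, $\ff_T$ correspond to the paper's $\pi_F,\pi_S,\pi_C$, $\bface_\bullet$, $\ff_\bullet$, $\fff$). The paper restricts its sketch to the residual case $m=m'=-\infty$ and does the density bookkeeping by first converting to b-half-densities (shifting each index set by $\tfrac{n-1}{2}$) before applying the pushforward theorem, whereas you keep the $\KD_0$ convention throughout and absorb the shift into the integrability condition at the end; the two bookkeepings are equivalent. One minor imprecision: the reason the last three blow-ups commute is not merely that the $D_{ij}$ meet cleanly, but that their lifts to $[X^3;\diag_{\pa X}^{(3)}]$ are pairwise \emph{disjoint} (their common intersection is exactly the triple boundary diagonal just blown up).
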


\begin{rmk}[Properly supported Schwartz kernels]
  If $X$ is noncompact, we shall consider compositions of two 0-ps.d.o.s only under the assumption (which we will not spell out henceforth) that the Schwartz kernel of at least one of them is properly supported in $X^2_0$. Note that any $P\in\Psi_0^m(X,\Omegazero^{\frac12}X)$ is equal to the sum of a properly supported operator and an element of $\cA_\phg^\emptyset(X^2,\KD_0)$. Thus, for the purposes of parametrix constructions, we can restrict to the subclass of operators with properly supported Schwartz kernels.
\end{rmk}

\begin{proof}[Proof of Proposition~\usref{PropCC0}]
  We sketch the proof using the machinery of triple spaces and b-fibrations; we shall only consider the case $m=m'=-\infty$. The 0-triple space is
  \begin{equation}
  \label{EqCC0Triple}
    X_0^3 := [ X^3; B'; B_F, B_S, B_C ],
  \end{equation}
  where $B'=\{(p,p,p)\colon p\in\pa X\}$ is the triple diagonal of the boundary, and $B_F$, $B_S$, $B_C$ are the lifts to $[X^3;B']$ of the preimages of $\diag_{\pa X}$ under the projection maps $X^3\to X^2$ given by $\pi_F\colon(p,q,r)\mapsto(p,q)$, $\pi_S\colon(p,q,r)\mapsto(q,r)$, and $\pi_C\colon(p,q,r)\mapsto(p,r)$, respectively. We denote the lift of $B'$ by $\fff$, and the lifts of $B_F,B_S,B_C$ by $\ff_F,\ff_S,\ff_C$; the lifts of $X\times X\times\pa X$, $\pa X\times X\times X$, and $X\times\pa X\times X$ are denoted $\bface_F$, $\bface_S$, and $\bface_C$, respectively. The projection maps $\pi_F,\pi_S,\pi_C$ lift to b-fibrations $X^3_0\to X^2_0$, see \cite[\S4.6]{AlbinLectureNotes}.
  
  Consider now residual 0-ps.d.o.s $A,B$ with Schwartz kernels
  \begin{equation}
  \label{EqCC0SK}
    K_A\in\cA_\phg^\cE(X^2_0,\KD_0) = \cA_\phg^{\cE'}(X^2_0,\Omegab^{\frac12} X_0^2), \qquad
    K_B\in\cA_\phg^{\cF'}(X^2_0,\Omegab^{\frac12}X_0^2),
  \end{equation}
  where, in view of~\eqref{EqCDensities}, $\cE'=(\cE'_\lb,\cE'_\ff,\cE'_\rb):=\cE-(\frac{n-1}{2},\frac{n-1}{2},\frac{n-1}{2})$, similarly for $\cF'$. The Schwartz kernel of $A\circ B$ (if the composition is defined) is the push-forward
  \[
    K_{A\circ B} = \nu^{-1} (\pi_C)_*\bigl( \pi_F^*K_A \cdot \pi_S^*K_B\cdot\pi_C^*\nu \bigr),
  \]
  where $0<\nu\in\CI(X^2_0,\Omegab^{\frac12}X^2_0)$ is arbitrary. Upon ordering the hypersurfaces of $X^3_0$ by $\fff$, $\ff_F,\ff_S,\ff_C$, $\bface_F,\bface_S,\bface_C$, we have
  \begin{equation}
  \label{EqCCProd}
  \begin{split}
    &\pi_F^*K_A \cdot \pi_S^*K_B\cdot\pi_C^*\nu \\
    &\quad \in \cA_\phg^{(\cE'_\ff+\cF'_\ff+\frac{n-1}{2},\ \cE'_\ff+\cF'_\lb+\frac{n-1}{2},\cE'_\rb+\cF'_\ff+\frac{n-1}{2},\cE'_\lb+\cF'_\rb+\frac{n-1}{2},\ \cF'_\rb,\cE'_\lb,\cE'_\rb+\cF'_\lb)}(X^3_0,\Omegab X^3_0).
  \end{split}
  \end{equation}
  The shifts by $\frac{n-1}{2}$ arise due to the relationship between b-$\half$-density bundles upon blowing up a submanifold which has codimension $n-1$ in the smallest boundary face containing it. (This is analogous to the discussion leading up to~\eqref{EqCDensities}.)

  The map $\pi_C$ maps $\fff\cup\ff_C$ to $\ff$, $\bface_S\cup\ff_F$ to $\lb$, and $\bface_F\cup\ff_S$ to $\rb$; push-forward of~\eqref{EqCCProd} along $\pi_C$ is well-defined provided $\Re(\cE'_\rb+\cF'_\lb)>0$. Under this condition,
  \[
    K_{A\circ B} \in \cA_\phg^{(\cE'_\lb\extcup\,(\cE'_\ff+\cF'_\lb+\frac{n-1}{2}),(\cE'_\ff+\cF'_\ff+\frac{n-1}{2})\extcup\,(\cE'_\lb+\cF'_\rb+\frac{n-1}{2}),\cF'_\rb\extcup\,(\cE'_\rb+\cF'_\ff+\frac{n-1}{2}))}(X^2_0,\Omegab^{\frac12}X^2_0).
  \]
  Reverting back to the density $\KD_0$, we obtain $K_{A\circ B}$ as a section of $\KD_0$, with all index sets increased relative to those of $K_{A\circ B}$ by $\frac{n-1}{2}$. This gives the claimed result.
\end{proof}

\begin{lemma}[Action on polyhomogeneous distributions]
\label{LemmaCPhg}
  Let $\cE=(\cE_\lb,\cE_\ff,\cE_\rb)$ be a collection of index sets and $A\in\Psi_0^{m,\cE}(X,\Omegazero^{\frac12}X)$. Let $\cF\subset\C\times\N_0$ denote an index set. Suppose $\Re(\cE_\rb+\cF)>n-1$, and let $\cG=(\cF+\cE_\ff)\extcup\cE_\lb$. Then
  \begin{equation}
  \label{EqCPhg}
    A \colon \cA_\phg^\cF(X,\Omegazero^{\frac12}X) \to \cA_\phg^\cG(X,\Omegazero^{\frac12}X).
  \end{equation}
\end{lemma}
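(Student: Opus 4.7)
The proof is a pushforward computation along the left projection $\pi_L\colon X^2_0\to X$, paralleling the composition argument of Proposition~\ref{PropCC0}. With appropriate density conventions, write $Au=(\pi_L)_*\bigl(K_A\cdot\pi_R^* u\bigr)$. The map $\pi_L$ is a b-fibration sending $\lb$ and $\ff$ onto $\pa X$ and sending $\rb$ surjectively onto $X$ (touching $\pa X$ only at $\rb\cap\ff$), so that $\rb$ maps generically into the interior of $X$. The integrand $K_A\cdot\pi_R^* u$ is conormal at $\diag_0$ (from the singularity of $K_A$) and polyhomogeneous at the boundary hypersurfaces of $X^2_0$, with index set $\cF$ inherited by $\pi_R^*u$ at both $\ff$ and $\rb$ (since $\pi_R^{-1}(\pa X)=\ff\cup\rb$) and trivial index set at $\lb$.

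Switching via~\eqref{EqCDensities} to $\Omegab^{\frac12}X^2_0$ on $X^2_0$ and recasting $\Omegazero^{\frac12}X$ as $\rho^{-\frac{n-1}{2}}\Omegab^{\frac12}X$ on the right factor, the combined weight shifts are $-\frac{n-1}{2}$ at $\lb,\ff,\rb$ from $\KD_0$ plus an additional $-\frac{n-1}{2}$ at $\ff,\rb$ from the right-factor density conversion. The integrand at $\rb$ thus has index set $(\cE_\rb+\cF)-(n-1)$ in the b-density convention; strict positivity of its real part---the integrability condition of Melrose's pushforward theorem at a hypersurface mapping into the interior of the target---is precisely the stated hypothesis $\Re(\cE_\rb+\cF)>n-1$. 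The conormal singularity at $\diag_0$ is handled by the standard symbol calculus (using that $\pi_L$ restricts to a diffeomorphism $\diag_0\to X$), contributing to the interior action of $A$ but not affecting the boundary polyhomogeneity of the pushforward.

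Under the hypothesis, Melrose's pushforward theorem yields $Au$ polyhomogeneous on $X$ with index set at $\pa X$ given by the extended union of the contributions from the hypersurfaces of $X^2_0$ mapping onto $\pa X$: at $\lb$, only $K_A$ contributes (as $\pi_R^*u$ is smooth there), giving $\cE_\lb$; at $\ff$, both factors contribute, giving $\cE_\ff+\cF$. The cumulative $\rho_H^{-\frac{n-1}{2}}$ weight shifts cancel upon reverting to $\Omegazero^{\frac12}X$ on the target, producing the final index set $\cE_\lb\extcup(\cE_\ff+\cF)=\cG$ and hence~\eqref{EqCPhg}. The only real subtlety, paralleling Proposition~\ref{PropCC0}, is the careful bookkeeping of these density weight shifts to confirm that the integrability threshold at $\rb$ is exactly $n-1$ and that the final index set on $X$ is exactly $\cG$ without extra fractional shifts.
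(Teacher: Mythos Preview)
Your proof is correct and follows the same pushforward-along-$\pi_L$ argument as the paper's proof. The paper writes the formula as $A f=\nu^{-1}(\pi_L)_*(K_A\cdot\pi_R^*f\cdot\pi_L^*\nu)$ with an auxiliary b-$\half$-density $\nu$ and restricts to $m=-\infty$, but the structure and the key check (integrability at $\rb$ amounting to $\Re(\cE_\rb+\cF)>n-1$) are identical; your brief remark on the diagonal conormal singularity for general $m$ is the standard extension the paper omits.
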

\begin{proof}
  For simplicity of notation, we only consider the case $m=-\infty$. Let $\pi_L,\pi_R\colon X^2_0\to X$ denote the stretched left and right projections. Fix a b-density $0<\nu\in\CI(X,\Omegab^{\frac12}X)$. Write $K_A=\cA_\phg^{\cE'}(X_0^2,\Omegab^{\frac12}X^2_0)$, $\cE'=\cE-(\frac{n-1}{2},\frac{n-1}{2},\frac{n-1}{2})$ for the Schwartz kernel of $A$ as in~\eqref{EqCC0SK}. For $f\in\cA_\phg^\cF(X,\Omegazero^{\frac12}X)=\cA_\phg^{\cF'}(X,\Omegab^{\frac12}X)$, $\cF'=\cF-\frac{n-1}{2}$, we compute
  \[
    A f = \nu^{-1} (\pi_L)_* \bigl( K_A \cdot \pi_R^*f \cdot \pi_L^*\nu \bigr).
  \]
  Note then that the index set at $\rb$ of
  \[
    K_A\cdot\pi_R^*f\cdot\pi_L^*\nu \in \cA_\phg^{(\cE'_\lb,\cE'_\ff+\cF'+\frac{n-1}{2},\cE'_\rb+\cF')}(X^2_0,\Omegab X^2_0)
  \]
  has real part larger than $0$, and hence pushforward along $(\pi_L)_*$ produces an element of $\cA_\phg^{\cE'_\lb\extcup\,(\cE'_\ff+\cF'+\frac{n-1}{2})}(X,\Omegab X)$. Division by $\nu$ and regarding the result as a section of $\Omegazero^{\frac12}X$ gives~\eqref{EqCPhg}.
\end{proof}

\begin{rmk}[Action on polyhomogeneous distributions does not determine $\cE_\ff$]
  The index set $\cG$ can be significantly smaller than stated in Lemma~\ref{LemmaCPhg}. For example, the operator $(x D_{y_1})^N\in\Psi_0^N(X)$---whose Schwartz kernel does \emph{not} vanish at $\ff$---maps $\cA_\phg^\cF(X)\to\cA_\phg^{\cF+N}(X)$. Using the notation~\eqref{Eq0Op}, this is related to the fact that the first $N$ moments of $K_P^0$ in the $\Ups$-variables vanish, or equivalently that its Fourier transform in $\Ups$ vanishes to order $N$ at $\eta=0$. More generally then, one can construct an operator $P\in\Psi_0^{-\infty}(X,\Omegazero^{\frac12}X)$ with nontrivial normal operator but which maps $\cA_\phg^\cF(X)\to\cA_\phg^\emptyset(X)=:\CIdot(X)$ for all $\cF$. 
\end{rmk}

In the extended 0-calculus, we only need the following result:

\begin{prop}[Composition of extended 0-ps.d.o.s]
\label{PropCC0x}
  Let $\cE=(\cE_{\lb'},\cE_{\ff'},\cE_{\ff_\bop},\cE_{\rb'})$ denote a collection of index sets, and let $m\in\R$. Then
  \[
    \Psi_0^m(X,\Omegazero^{\frac12}X) \circ \Psi_{0'}^{-\infty,\cE}(X,\Omegazero^{\frac12}X) \subset \Psi_{0'}^{-\infty,\cE}(X,\Omegazero^{\frac12}).
  \]
\end{prop}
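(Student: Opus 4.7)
The plan is to mimic the triple-space argument from the proof of Proposition~\ref{PropCC0}, adapted to the extended double space $X^2_{0'}$. I would first introduce an \emph{extended 0-triple space} $X^3_{0'}$, obtained from $X^3$ by an iterated blow-up of the triple boundary diagonal $\{(p,p,p):p\in\pa X\}$, the three lifted partial boundary diagonals $\pi_\bullet^{-1}(\diag_{\pa X})$ for $\bullet=F,S,C$, and the three codimension-two boundary submanifolds $(\pa X)^2\times X$, $\pa X\times X\times\pa X$, $X\times(\pa X)^2$, in a suitable order so that each projection $\pi_\bullet\colon X^3\to X^2$ lifts to a b-fibration $X^3_{0'}\to X^2_{0'}$. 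This is the extended analog of the triple space in~\eqref{EqCC0Triple}.

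Next, I would lift $K_A$ via $\pi_F^*$ and $K_B$ via $\pi_S^*$, multiply them with an appropriate smooth positive b-half-density factor (as in~\eqref{EqCCProd}), and push forward via $\pi_C$. By Melrose's pushforward theorem, the result is polyhomogeneous on $X^2_{0'}$, with index set at each hypersurface $H'\subset X^2_{0'}$ equal to the extended union, over hypersurfaces $H\subset X^3_{0'}$ with $\pi_C(H)=H'$, of the sums of the index sets of $\pi_F^*K_A$ and $\pi_S^*K_B$ at $H$. The conormal singularity of $\pi_F^*K_A$ along the lift of $\diag_0$ is transversal to the fibers of $\pi_C$, and being convolved against the smooth kernel $K_B$ it pushes forward to a smooth contribution at $\diag_0\subset X^2_{0'}$, so no conormal singularity survives.

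The key simplification exploits the structure of $K_A$: it vanishes to infinite order at $\lb'\cup\ff_\bop\cup\rb'$ and is smooth (index set $\N_0$) at $\ff'$. Therefore $\pi_F^*K_A$ vanishes to infinite order at every hypersurface of $X^3_{0'}$ mapped under $\pi_F$ into $\lb'\cup\ff_\bop\cup\rb'$. The only nontrivial contributions to the push-forward come from hypersurfaces mapping under $\pi_F$ to $\ff'$ or to the interior, where the contribution is $\N_0+\cE_{\pi_S(H)}=\cE_{\pi_S(H)}$ (using that $\N_0+\cF=\cF$ for any index set $\cF$). Careful accounting of which faces of $X^3_{0'}$ project under $\pi_C$ to which faces of $X^2_{0'}$, paired with the corresponding $\pi_S$-images (analogous to the bookkeeping in the proof of Proposition~\ref{PropCC0}), shows that the extended union at each $H'\subset X^2_{0'}$ reduces to $\cE_{H'}$.

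The main obstacle is the correct construction of $X^3_{0'}$ with all three projections as b-fibrations; this is analogous to, but more intricate than, the construction of $X^3_0$ from~\eqref{EqCC0Triple}, since additional corner blow-ups are needed to resolve $\pi_\bullet^{-1}(\ff_\bop)$ into a union of boundary hypersurfaces at which all three projections restrict to b-maps. Once that is in place, the index-set calculation is essentially mechanical, powered by the observation that adding $\N_0$ to any index set leaves it unchanged and by the vanishing of $\pi_F^*K_A$ at the ``unwanted'' faces of the triple space.
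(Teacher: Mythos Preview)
Your proposal is correct and follows exactly the same route as the paper: construct an extended 0-triple space $X^3_{0'}$ on which the three projections lift to b-fibrations, then apply pullback/pushforward, exploiting the infinite-order vanishing of $K_A$ at $\lb'\cup\ff_\bop\cup\rb'$ so that only $\N_0+\cE_\bullet=\cE_\bullet$ contributions survive. The one thing you left unspecified, the precise blow-up recipe, is supplied in the paper as
\[
  X^3_{0'} := \bigl[ X^3;\ (\pa X)^3;\ X\times\pa X\times\pa X,\ \pa X\times X\times\pa X,\ \pa X\times\pa X\times X;\ B';\ B_F, B_S, B_C \bigr];
\]
note in particular the initial blow-up of the codimension-three corner $(\pa X)^3$, which is missing from your list of submanifolds.
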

\begin{proof}
  Lifting the Schwartz kernel of an element of $\Psi_0^m(X,\Omegazero^{\frac12}X)$ to $X^2_{0'}$ (see~\eqref{EqCExtPsi}), the proof follows via pullback and pushforward results completely analogously to the proof of Proposition~\ref{PropCC0}. In fact, general compositions in the extended 0-calculus can be analyzed by means of an appropriate extended 0-triple space, defined as
  \[
    X^3_{0'} := \bigl[ X^3; (\pa X)^3; X\times\pa X\times\pa X, \pa X\times X\times\pa X, \pa X\times\pa X\times X; B'; B_F, B_S, B_C \bigr]
  \]
  where $B'$, $B_F$, $B_S$, $B_C$ are as after~\eqref{EqCC0Triple}; the three projections $X^3\to X^2$ lift to b-fibrations $X^3_{0'}\to X^2_{0'}$. We leave the details to the reader.
\end{proof}

In the calculus with bounds, we shall use the following composition result; the (omitted) proof follows again from pullback and pushforward results \cite{MelrosePushfwd}.

\begin{prop}[Composition in the 0-calculus with bounds]
\label{PropCCBounds}
  Given $\alpha=(\alpha_\lb,\alpha_\ff,\alpha_\rb)$ and $\beta=(\beta_\lb,\beta_\ff,\beta_\rb)\in\R^3$, fix $\gamma=(\gamma_\lb,\beta_\ff,\beta_\rb)\in\R^3$ so that
  \begin{equation}
  \label{EqCCBounds}
  \begin{split}
    &\gamma_\lb \leq \min(\alpha_\lb,\alpha_\ff+\beta_\lb),\quad
    \gamma_\rb \leq \min(\alpha_\rb+\beta_\ff,\beta_\rb), \quad
    \gamma_\ff \leq \min(\alpha_\ff+\beta_\ff,\alpha_\lb+\beta_\rb), \\
    &\quad\text{with strict inequality in each individual case when both arguments of $\min$ are equal.}
  \end{split}
  \end{equation}
  Suppose that $\alpha_\rb+\beta_\lb>n-1$. Then for $m,m'\in\R\cup\{-\infty\}$, we have
  \[
    \Psi_0^{m,\alpha}(X,\Omegazero^{\frac12}X) \circ \Psi_0^{m',\beta}(X,\Omegazero^{\frac12}X) \subset \Psi_0^{m+m',\gamma}(X,\Omegazero^{\frac12}X).
  \]
  When $\alpha_\ff=0$, $\beta_\ff=0$, let $\gamma_\lb,\gamma_\rb$ be as in~\eqref{EqCCBounds}, and let $\gamma_\ff=\alpha_\lb+\beta_\rb$ unless $\alpha_\lb+\beta_\rb=k\in\N_0$ in which case fix $\gamma_\ff<k$. Then
  \begin{align*}
    &\Psi_0^{m,(\alpha_\lb,\alpha_\rb)}(X,\Omegazero^{\frac12}X) \circ \Psi_0^{m',(\beta_\lb,\beta_\rb)}(X,\Omegazero^{\frac12}X) \\
    &\qquad \subset \Psi_0^{m+m',(\gamma_\lb,\gamma_\rb)}(X,\Omegazero^{\frac12}X) + \Psi_0^{-\infty,(\gamma_\lb,\gamma_\ff,\gamma_\rb)}(X,\Omegazero^{\frac12}X).
  \end{align*}
\end{prop}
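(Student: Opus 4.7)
The plan is to adapt the proof of Proposition~\ref{PropCC0} by replacing the polyhomogeneous pullback and pushforward machinery with its conormal counterpart with bounds, as developed in \cite{MelrosePushfwd}. First I would compute the triple product $\pi_F^*K_A\cdot\pi_S^*K_B\cdot\pi_C^*\nu$ on the 0-triple space $X^3_0$ of \eqref{EqCC0Triple}, where $\nu>0$ is a fixed smooth b-$\half$-density on $X^2_0$. Using the fact that each $\pi_\bullet$ is a b-fibration and tracking the weight shifts arising from conversion between $\Omegazero$ and $\Omegab$ on each blow-up (entirely analogous to the derivation of \eqref{EqCDensities} but one level higher, with codimension-$(n-1)$ blow-ups producing additional $\frac{n-1}{2}$-shifts), one obtains a conormal distribution on $X^3_0$ with valued b-$\half$-densities, with explicit weights at the seven boundary hypersurfaces $\fff$, $\ff_F$, $\ff_S$, $\ff_C$, $\bface_F$, $\bface_S$, $\bface_C$.

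Second, I would push forward along the b-fibration $\pi_C\colon X^3_0\to X^2_0$. This map sends $\fff\cup\ff_C\to\ff$, $\bface_S\cup\ff_F\to\lb$, and $\bface_F\cup\ff_S\to\rb$, while $\bface_C$ is mapped into the interior of $X^2_0$ and thus parametrizes the fiber being integrated out. The condition $\alpha_\rb+\beta_\lb>n-1$ is precisely the integrability requirement along $\bface_C$, once the density shift between $\Omegazero$ and $\Omegab$ is incorporated. Melrose's pushforward theorem for conormal distributions with bounds then produces a conormal distribution on $X^2_0$ whose weight at each boundary hypersurface is the minimum of the weights at the preimage hypersurfaces. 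Reverting the density from $\Omegab^{\frac12}X^2_0$ to $\KD_0$ via \eqref{EqCDensities} and bookkeeping the $\frac{n-1}{2}$-shifts yields exactly the formulas in \eqref{EqCCBounds}. The strict-inequality stipulation when both arguments of $\min$ are equal reflects the standard feature of conormal (as opposed to polyhomogeneous) pushforward that coincident exponents along fibers produce logarithmic factors; absorbing an arbitrarily small loss in the weight eliminates these.

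For the second statement of the proposition, when $\alpha_\ff=\beta_\ff=0$ the Schwartz kernels of $A$ and $B$ are conormal with specified weights at $\lb,\rb$ but do not a priori vanish at $\ff$; correspondingly, they descend smoothly from $X^2_0$ to $X^2$ away from $\lb\cup\rb$. Writing $K_{A\circ B}$ via the same triple-space pushforward, the contribution coming from the interior of $\ff$ is the result of integrating the (non-vanishing) restrictions of $K_A$ and $K_B$ to $\ff$; this lifts from $X^2$ to $X^2_0$ and furnishes the leading $\Psi_0^{m+m',(\gamma_\lb,\gamma_\rb)}$ summand. The remainder, localized near $\ff$, inherits the $\ff$-weight $\alpha_\lb+\beta_\rb$ from the $\bface_S\cup\ff_F$ and $\bface_F\cup\ff_S$ contributions meeting over $\ff$; when $\alpha_\lb+\beta_\rb\in\N_0$, the same logarithmic mechanism as above forces $\gamma_\ff<\alpha_\lb+\beta_\rb$.

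The main obstacle is the careful bookkeeping of weights at all seven faces of $X^3_0$ under the two pullbacks and the pushforward, especially the correct accounting of density-shift contributions from each codimension-$(n-1)$ blow-up. A secondary technical point is ensuring that every case in which coincident exponents would generate logarithms is covered by the strict-inequality clauses in \eqref{EqCCBounds}; this is straightforward once the preimages of each boundary face of $X^2_0$ under $\pi_C$ are matched with their computed weights from step one.
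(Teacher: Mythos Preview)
Your proposal is correct and matches the paper's own approach: the paper omits the proof entirely, stating only that it ``follows again from pullback and pushforward results \cite{MelrosePushfwd}'', i.e.\ exactly the adaptation of Proposition~\ref{PropCC0} to the conormal-with-bounds setting that you outline. One small correction to your account of the second statement: the $\ff$-weight $\alpha_\lb+\beta_\rb$ in the remainder arises from the face $\ff_C$ (which carries weights $\alpha_\lb+\beta_\rb$ after the density shifts and maps to $\ff$ under $\pi_C$), not from $\bface_S\cup\ff_F$ or $\bface_F\cup\ff_S$; and the kernels in $\Psi_0^{-\infty,(\alpha_\lb,\alpha_\rb)}$ are smooth down to $\ff$ on $X^2_0$ but do not descend to $X^2$---the splitting into a smooth-at-$\ff$ piece and a conormal piece comes rather from the mixed polyhomogeneous/conormal pushforward, with the $\N_0$ contribution at $\ff$ from $\fff$ producing the $\Psi_0^{m+m',(\gamma_\lb,\gamma_\rb)}$ summand and the conormal contribution from $\ff_C$ the residual term.
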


\subsection{Schwartz kernel of the (reduced) normal operator}
\label{SsCN}

Let $x\geq 0$, $y\in\R^{n-1}$ denote local coordinates near a point in $\pa X$. The lifts of these functions along the left projection of $X^2_0$ and $X^2_{0'}$ to $X$ are denoted by the same letters $x,y$, and the lifts along the right projection by primed letters $x',y'$. (Discussions of the invariant content of various constructions below can be found in \cite{MazzeoMelroseHyp,LauterPsdoConfComp}.)

At first, let us work locally near $\ff\setminus\rb$, where we can use the coordinates
\begin{equation}
\label{EqCNCoord}
  s=\frac{x}{x'},\qquad
  \Ups=\frac{y-y'}{x'},\qquad
  x',\qquad
  y'.
\end{equation}
Consider two operators $P,Q\in\Psi_0^{-\infty}(X,\Omegazero^{\frac12}X)$ whose Schwartz kernels are supported away from $\lb\cup\rb$; note that the space of such operators is dense in the space $\Psi_0^{m,(\alpha_\lb,\alpha_\rb)}(X,\Omegazero^{\frac12}X)$ in the topology of $\Psi_0^{m+\eps,(\alpha_\lb-\eps,\alpha_\rb-\eps)}(X,\Omegazero^{\frac12}X)$ for any $m\in\R$, $\alpha_\lb,\alpha_\rb\in\R$, and $\eps>0$. Write the Schwartz kernel of $P$ in local coordinates as
\begin{equation}
\label{Eq0Op}
  K_P(x,y,x',y')\,\Bigl|\frac{\dd x}{x}\frac{\dd y}{x^{n-1}}\frac{\dd x'}{x'}\frac{\dd y'}{x'{}^{n-1}}\Bigr|^{\frac12} = K_P^0(s,\Ups,x',y')\,\Bigl|\frac{\dd x}{x}\frac{\dd y}{x^{n-1}}\frac{\dd x'}{x'}\frac{\dd y'}{x'{}^{n-1}}\Bigr|^{\frac12},
\end{equation}
and likewise for $Q$. The Schwartz kernel $K_{P\circ Q}^0(s,\Ups,x',y')|\frac{\dd x}{x}\frac{\dd y}{x^{n-1}}\frac{\dd x'}{x'}\frac{\dd y'}{x'{}^{n-1}}|^{\frac12}$ of $P\circ Q$ in the coordinates~\eqref{EqCNCoord} (thus $x=s x'$ and $y=y'+x'\Ups$) is then given by
\begin{align*}
  K_{P\circ Q}^0(s,\Ups,x',y') &= \iint K_P(s x',y'+x'\Ups,x'',y'')K_Q(x'',y'',x',y')\,\frac{\dd x''}{x''}\frac{\dd y''}{x''{}^{n-1}} \\
    &= \iint K_P^0\Bigl(\frac{s x'}{x''},\frac{y'-y''+x'\Ups}{x''},x'',y''\Bigr)K_Q^0\Bigl(\frac{x''}{x'},\frac{y''-y'}{x'},x',y'\Bigr)\,\frac{\dd x''}{x''}\frac{\dd y''}{x''{}^{n-1}} \\
    &= \iint K_P^0\Bigl(\frac{s}{s'},\frac{\Ups-\Ups'}{s'},s' x',y'+x'\Ups'\Bigr) K_Q^0(s',\Ups',x',y')\,\frac{\dd s'}{s'}\frac{\dd\Ups'}{s'{}^{n-1}},
\end{align*}
where we introduced $s'=\frac{x''}{x'}$ and $\Ups'=\frac{y''-y'}{x'}$. Upon restriction to the 0-front face $x'=0$, we thus find that
\[
  K_{P\circ Q}^0(s,\Ups,0,y') = \iint K_P^0\Bigl(\frac{s}{s'},\frac{\Ups-\Ups'}{s'},0,y'\Bigr)K_Q^0(s',\Ups',0,y')\,\frac{\dd s'}{s'}\frac{\dd\Ups'}{s'{}^{n-1}}.
\]
Thus, operator composition in the 0-calculus restricts at $\ff$ to the composition of convolution operators on the semidirect product $\R^{n-1}_\Ups\rtimes(\R_+)_s$, with smooth parametric dependence on the boundary point $y'$. That is, the normal operator map
\[
  N(P,y') := K_P^0|_{\ff_{y'}},
\]
where $\ff_{y'}$ is the fiber over $y'$ of the blow-down map $\ff\to\diag_{\pa X}\cong\pa X$, is a homomorphism from $\Psi_0^{-\infty}(X,\Omegazero^{\frac12}X)$ into the space of convolution operators on $\R^{n-1}\rtimes\R_+$. By the aforementioned density, the homomorphism property $N(P\circ Q,y')=N(P,y')\circ N(Q,y')$ continues to hold for elements $P,Q$ of the (extended) large calculus whenever the composition $P\circ Q$ is defined.

Let us extend the restriction $K_P^0(s,\Ups,0,y')$ to the Schwartz kernel of the unique operator which has normal operator $K_P^0(s,\Ups,0,y')$ and is invariant under the action of the group $\R^{n-1}\rtimes\R_+$. To make this explicit, we use tildes to denote coordinates on the model space $X_{y'}:=[0,\infty)_{\tilde x}\times\R^{n-1}_{\tilde y}$. Thus, we identify $N(P,y')$ with the operator in $\Psi_0^{-\infty}(X_{y'},\Omegazero^{\frac12}X_{y'})$ whose Schwartz kernel is
\[
  N(P,y')(\tilde x,\tilde y,\tilde x',\tilde y') = K_P^0\Bigl(\frac{\tilde x}{\tilde x'},\frac{\tilde y-\tilde y'}{\tilde x'},0,y'\Bigr)\,\Bigl|\frac{\dd\tilde x}{\tilde x}\frac{\dd\tilde y}{\tilde x^{n-1}}\frac{\dd\tilde x'}{\tilde x'}\frac{\dd\tilde y'}{\tilde x'{}^{n-1}}\Bigr|^{\frac12}.
\]
For fixed $(\tilde x,\tilde x')$, this is a convolution in the $\tilde y$-variables. Trivializing the $\half$-density bundle in the tangential variables $\tilde y,\tilde y'$ via $|\dd\tilde y\,\dd\tilde y'|$ and conjugating by the Fourier transform, we obtain an operator family with Schwartz kernel
\[
   (\tilde x,\tilde x') \mapsto \tilde x'{}^{n-1} \wh{K_P^0}\Bigl(\frac{\tilde x}{\tilde x'},\tilde x'\tilde\eta,0,y'\Bigr)\,\Bigl|\frac{\dd\tilde x}{\tilde x^n}\frac{\dd\tilde x'}{\tilde x'{}^n}\Bigr|^{\frac12},
\]
where $\wh{K_P^0}$ denotes the Fourier transform in the second argument ($\Ups$); this is an operator acting on the bundle of weighted b-$\half$-densities with local frame $|\frac{\dd\tilde x}{\tilde x^n}|^{\frac12}$. Multiplication by $\tilde x^{\frac{n-1}{2}}$ is an isomorphism between this bundle and the unweighted b-$\half$-density bundle, and upon conjugation by this weight we obtain the \emph{transformed normal operator} (changing $y'$ to $y_0$ for notational consistency with the introduction)
\begin{equation}
\label{EqCNOpTransf}
\begin{split}
  N(P,y_0,\tilde\eta)(\tilde x,\tilde x') &:= \tilde x^{\frac{n-1}{2}}\tilde x'{}^{-\frac{n-1}{2}}\cdot\tilde x'{}^{n-1} \wh{K_P^0}\Bigl(\frac{\tilde x}{\tilde x'},\tilde x'\tilde\eta,0,y'\Bigr)\,\Bigl|\frac{\dd\tilde x}{\tilde x^n}\frac{\dd\tilde x'}{\tilde x'{}^n}\Bigr|^{\frac12} \\
    &= \wh{K_P^0}\Bigl(\frac{\tilde x}{\tilde x'},\tilde x'\tilde\eta,0,y'\Bigr)\,\Bigl|\frac{\dd\tilde x}{\tilde x}\frac{\dd\tilde x'}{\tilde x'}\Bigr|^{\frac12},
\end{split}
\end{equation}
as an operator acting on sections of the b-$\half$-density bundle over $[0,\infty]$.

\begin{definition}[Normal operator]
\label{DefCN}
  In local coordinates $x,y$ on $X$, lifted to the left, resp.\ right factor of $X^2$ as $x,y$, resp.\ $x',y'$, write the Schwartz kernel of an element $P$ of the (large) (extended) 0-calculus as~\eqref{Eq0Op} in the coordinates~\eqref{EqCNCoord}. Then the operator $N(P,y_0,\tilde\eta)$ defined by~\eqref{EqCNOpTransf} (acting on b-$\half$-densities on $[0,\infty]$) is the \emph{transformed normal operator}. The \emph{reduced normal operator} is the operator family, acting on b-$\half$-densities on $[0,\infty]_t$, parameterized by $y_0\in\R^{n-1}$ and $\hat\eta\in\Sph^{n-2}$, with Schwartz kernel
  \[
    \hat N(P,y_0,\hat\eta)(t,t') := \wh{K_P^0}\Bigl(\frac{t}{t'},t'\hat\eta,0,y_0\Bigr)\,\Bigl|\frac{\dd t}{t}\frac{\dd t'}{t'}\Bigr|^{\frac12}.
  \]
\end{definition}

Thus, $\hat N(P,y_0,\hat\eta)$ arises from $N(P,y_0,\eta)$ by changing variables $t=\tilde x|\tilde\eta|$, $t'=\tilde x'|\tilde\eta|$ and setting $\hat\eta=\tilde\eta/|\tilde\eta|$. In the case that $P$ is the differential operator given by~\eqref{EqIOp} in the trivialization $|\frac{\dd x}{x}\frac{\dd y}{x^{n-1}}|^{\frac12}$ of $\Omegazero^{\frac12}X$, then $N(P,y_0,\eta)$ and $\hat N(P,y_0,\hat\eta)$ are (the Schwartz kernels of) the operators~\eqref{EqIOpNorm} and \eqref{EqIOpNormRed}. By construction, the maps $P\mapsto N(P,y_0,\eta)$ and $P\mapsto\hat N(P,y_0,\hat\eta)$ are homomorphisms in the generalized sense that they respect operator compositions in the large (extended) 0-calculus whenever the composition is defined. As a simple but important example, we note that the reduced normal operator of the identity $I$, $\hat N(I,y_0,\hat\eta)(t,t')=\delta(\frac{t}{t'})|\frac{\dd t}{t}\frac{\dd t'}{t'}|^{\frac12}$, is the identity operator on b-$\half$-densities.

\begin{rmk}[Short exact sequence]
\label{RmkCNSES}
  Directly from the definition, if all normal operators $N(P,y_0)$ of an operator $P\in\Psi_0^m(X,\Omegazero^{\frac12}X)$ vanish, then $P\in\rho_\ff\Psi_0^m(X,\Omegazero^{\frac12}X)$ (i.e.\ the Schwartz kernel vanishes to leading order at $\ff$). The analogous statements for the transformed or reduced normal operator are discussed in Remark~\ref{RmkCNDecReg}.
\end{rmk}

For the study of the boundary behavior of the reduced normal operator, it is computationally more convenient to pass to a different coordinate system which does not give preference to $\lb$ or $\rb$. Thus, consider as local coordinates near $\ff\setminus(\lb\cap\rb)=\ff'\setminus\ff_\bop$ inside the (extended) 0-double space of $X$
\begin{equation}
\label{EqCNGoodCoord}
  \rho=x+x',\qquad
  y' \in \R^{n-1},\qquad
  \tau:=\frac{x-x'}{x+x'} \in [-1,1],\qquad
  Y:=\frac{y-y'}{x+x'} \in \R^{n-1}.
\end{equation}
Thus, $\rho$ is a local defining function of $\ff$, while $\tau+1$ and $\tau-1$ are local defining functions of $\lb$ and $\rb$. Also, $\la Y\ra^{-1}$ extends by continuity to a local defining function of $\ff_\bop$ inside $X^2_{0'}$. We similarly define $\tilde\rho$, $\tilde y'$, $\tilde\tau$, $\tilde Y$ on $(X_{y'})_0^2$. Let us write (the Schwartz kernel of) $P$ in these coordinates as
\begin{equation}
\label{Eq0GoodSK}
  P = p(\rho,y',\tau,Y)\Bigl|\frac{\dd x}{x}\frac{\dd y}{x^{n-1}}\frac{\dd x'}{x'}\frac{\dd y'}{x'{}^{n-1}}\Bigr|^{\frac12}.
\end{equation}
(Thus, if $P$ lies in the small 0-calculus, then $p$ vanishes rapidly as $\tau\to\pm 1$ or $|Y|\to\infty$, and $p$ has a conormal singularity at the 0-diagonal $(\tau,Y)=(0,0)$.) The transformed normal operator is then
\begin{equation}
\label{EqCNGoodNOp}
\begin{split}
  N(P,y_0,\tilde\eta)(\tilde x,\tilde x') &:= \Bigl(\frac{\tilde x'}{\tilde x+\tilde x'}\Bigr)^{-(n-1)} \hat p\Bigl(y_0,\frac{\tilde x-\tilde x'}{\tilde x+\tilde x'},(\tilde x+\tilde x')\tilde\eta\Bigr) \Bigl|\frac{\dd\tilde x}{\tilde x}\frac{\dd\tilde x'}{\tilde x'}\Bigr|^{\frac12}, \\
  &\hspace{-3em} \hat p(y_0,\tau,\eta):=\int_{\R^{n-1}} e^{-i Y\cdot\eta}p(0,y_0,\tau,Y)\,\dd Y.
\end{split}
\end{equation}
The reduced normal operator is correspondingly (via $t=\tilde x|\tilde\eta|$, $t'=\tilde x'|\tilde\eta|$, $\hat\eta=\frac{\tilde\eta}{|\tilde\eta|}$ for $\tilde\eta\neq 0$, as before) given by
\begin{equation}
\label{EqCNGoodNOpResc}
  \hat N(P,y_0,\hat\eta)(t,t') = \Bigl(\frac{t'}{t+t'}\Bigr)^{-(n-1)} \hat p\Bigl(y_0,\frac{t-t'}{t+t'},(t+t')\hat\eta\Bigr) \Bigl|\frac{\dd t}{t}\frac{\dd t'}{t'}\Bigr|^{\frac12}.
\end{equation}
Conversely, we can recover $\hat p(y_0,\tau,\eta)$ for $\eta\neq 0$ from $\hat N(P,y_0,\hat\eta)$ by computing the solution $(t,t',\hat\eta)$ of the system $\frac{t-t'}{t+t'}=\tau$, $(t+t')\hat\eta=\eta$, which is given by $\hat\eta=\frac{\eta}{|\eta|}$, $(t,t')=\half|\eta|(1+\tau,1-\tau)$; this gives the formula
\begin{equation}
\label{EqCNRecover}
  \hat p(y_0,\tau,\eta) = \Bigl(\frac{1-\tau}{2}\Bigr)^{n-1}\biggl( \Bigl|\frac{\dd t}{t}\frac{\dd t'}{t'}\Bigr|^{-\frac12}\hat N\Bigl(P,y_0,\frac{\eta}{|\eta|}\Bigr)\biggr)\bigl(\half|\eta|(1+\tau),\half|\eta|(1-\tau)\bigr),\qquad\eta\neq 0.
\end{equation}

\subsection{b-scattering pseudodifferential operators}
\label{SsCNbsc}

Following \cite[\S3]{LauterPsdoConfComp}, we now describe the class of operators in which the reduced normal operator of a 0-ps.d.o.\ lies.

\begin{definition}[b-scattering double space]
\label{DefCNbsc}
  Let $[0,\infty]$ denote the compactification of $[0,\infty)_t$ with $(1+t)^{-1}$ as the defining function of infinity. We define the b-scattering double space of $[0,\infty]$ as
  \[
    [0,\infty]^2_{\bop,\scop} := \bigl[\,\ol{[0,\infty)^2}; \pa\diag \bigr],
  \]
  where $\ol{[0,\infty)^2}$ is the closure of the first quadrant in the radial compactification $\ol{\R^2}$ (thus it is closed quarter disc), and $\diag\subset\ol{[0,\infty)^2}$ is the closure of the diagonal $\{t=t'\}$, with boundary consisting of two points: $\pa\diag=\{(0,0),(\infty,\infty)\}$. We denote by $\diag_{\bop,\scop}\subset[0,\infty]^2_{\bop,\scop}$ the lifted diagonal, and by $\lb_\bop$, $\rb_\bop$, and $\ff_{\bop,0}$, $\ff_\scop$, and $\ff_{\bop,\infty}$ the lifts of $\{0\}\times[0,\infty]$, $[0,\infty]\times\{0\}$, and of $\{(0,0)\}$, $\{(\infty,\infty)\}$, and of the boundary at infinity of $\ol{[0,\infty)^2}$, respectively.
\end{definition}

See Figure~\ref{FigCNbsc}.

\begin{figure}[!ht]
\centering
\includegraphics{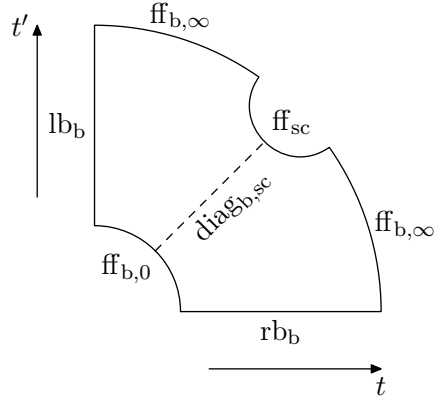}
\caption{The b-scattering double space.}
\label{FigCNbsc}
\end{figure}

In terms of the coordinates $\hat\tau=\frac{t-t'}{t+t'}$ and $\hat\rho=t+t'$ (analogous to $\tau$, $\rho$ in~\eqref{EqCNGoodCoord}), the b-scattering double space is diffeomorphic to $\bigl[ [-1,1]_{\hat\tau}\times[0,\infty]_{\hat\rho}; \{(0,\infty)\} \bigr]$.

\begin{definition}[b-scattering ps.d.o.s]
\label{DefCNbscPsdo}
  Let $\KD_\bop:=\pi_L^*\Omegab^{\frac12}[0,\infty]\otimes\pi_R^*\Omegab^{\frac12}[0,\infty]$ (which thus has the global section $|\frac{\dd t}{t}\frac{\dd t'}{t'}|^{\frac12}$). Let $\rho_{\ff_\scop}\in\CI([0,\infty]_{\bop,\scop}^2)$ denote a defining function of $\ff_\scop$. For $m,r\in\R$, we define\footnote{In view of the rapid vanishing at $\ff_{\bop,\infty}$, one can equivalently replace $\rho_{\ff_\scop}$ in this definition by the total defining function $(1+t+t')^{-1}$ of $\ff_\scop\cup\ff_{\bop,\infty}$.} \footnote{We write $\Omegab^{\frac12}=\Omegab^{\frac12}[0,\infty]$ here for better readability.}
  \[
    \Psi_{\bop,\scop}^{m,(0,r)}([0,\infty],\Omegab^{\frac12}) := \bigl\{ \kappa\in \cA_{\ff_\scop}^{-r-1} I^m([0,\infty]^2_{\bop,\scop}, \diag_{\bop,\scop}; \KD_\bop) \colon \kappa\equiv 0\ \text{at}\ \lb_\bop\cup\rb_\bop\cup\ff_{\bop,\infty} \bigr\},
  \]
  where $\cA_{\ff_\scop}^{-r-1}I^m$ is the space of conormal distributions which are smooth down to $\ff_\bop$ and conormal down to $\ff_\scop$.\footnote{That is, omitting the density bundle, they are inverse Fourier transforms from the $\sigma$-variable to $t-t'$ of symbols $a=a(t,\sigma)$ on $[0,\infty]_t\times\R_\sigma$ satisfying estimates $|\pa_t^j\pa_\sigma^k a|\lesssim\la\sigma\ra^{m-k}$ for $t\leq 2$ and $|(t\pa_t)^j\pa_\sigma^k a|\lesssim (t^{-1})^{-r-1}\la\sigma\ra^{m-k}$ for $t\geq 1$.} For a collection of index sets $\cE=(\cE_{\lb_\bop},\cE_{\ff_{\bop,0}},\cE_{\rb_\bop})$, we define
  \[
    \Psi_{\bop,\scop}^{m,\cE}([0,\infty]),\Omegab^{\frac12}) := \bigl\{ \kappa\in \cA_\phg^\cE I^m([0,\infty]^2_{\bop,\scop}, \diag_{\bop,\scop}; \KD_\bop) \colon \kappa\equiv 0\ \text{at}\ \ff_{\bop,\infty}\cup\ff_\scop \bigr\},
  \]
  i.e.\ the index sets at $\ff_\scop$ and $\ff_{\bop,\infty}$ are trivial.\footnote{For $m=-\infty$, this space equals $\cA_\phg^{(\cE_{\lb_\bop},\cE_{\ff_{\bop,0}},\cE_{\rb_\bop},\emptyset,\emptyset)}([0,\infty]_{\bop,\scop}^2,\KD_\bop)$.} For $\cE'=(\cE_0,\cE_1)$, we finally set
  \[
    \Psi^{-\infty,\cE'}([0,\infty],\Omegab^{\frac12}) := \cA_\phg^{(\cE_0,\cE_1,\emptyset)}\bigl(\ol{[0,\infty)^2},\KD_\bop\bigr),
  \]
  with index set $\cE_0$, $\cE_1$, $\emptyset$ assigned to $\{0\}\times[0,\infty]$, $[0,\infty]\times\{0\}$, and the boundary at infinity, respectively.
\end{definition}

The shift of the weight $-r$ at $\ff_\scop$ by $-1$ is due to the relationship between scattering and b-$\half$-density bundles near $\ff_\scop$
\[
  |\dd t\,\dd t'|^{\frac12} = t^{\frac12}t'{}^{\frac12} \Bigl|\frac{\dd t}{t}\frac{\dd t'}{t'}\Bigr|^{\frac12},
\]
with $t^{\frac12}t'{}^{\frac12}$ a smooth positive multiple of $\rho_{\ff_\scop}^{-1}\rho_{\ff_{\bop,\infty}}^{-1}$ near $\ff_\scop$. The chosen normalization thus ensures that the identity operator on b-$\half$-densities on $[0,\infty]$ lies in $\Psi_{\bop,\scop}^{0,(0,0)}([0,\infty],\Omegab^{\frac12})$.

Under composition, b-scattering-ps.d.o.s behave like b-ps.d.o.s near $\lb_\bop\cup\ff_{\bop,0}\cup\rb_\bop$ and like scattering ps.d.o.s near $\ff_\scop$. We refer the reader to \cite[\S3]{LauterPsdoConfComp}, \cite[\S5]{MelroseAPS}, \cite{MelroseEuclideanSpectralTheory} for more in-depth treatments. Here, we only record:

\begin{prop}[Composition of b-scattering-ps.d.o.s]
\label{PropCNbsc}
  Let $\cE=(\cE_{\lb_\bop},\cE_{\ff_{\bop,0}},\cE_{\rb_\bop})$ and $\cF=(\cF_{\lb_\bop},\cF_{\ff_{\bop,0}},\cF_{\rb_\bop})$ be two index sets. Define $\cG=(\cG_{\lb_\bop},\cG_{\ff_{\bop,0}},\cG_{\rb_\bop})$ by
  \begin{gather*}
    \cG_{\lb_\bop} = \cE_{\lb_\bop} \extcup\,(\cE_{\ff_{\bop,0}}+\cF_{\lb_\bop}), \qquad
    \cG_{\rb_\bop} = (\cE_{\rb_\bop} + \cF_{\ff_{\bop,0}}) \extcup \cF_{\rb_\bop}, \\
    \cG_{\ff_{\bop,0}} = (\cE_{\ff_{\bop,0}}+\cF_{\ff_{\bop,0}}) \extcup\,(\cE_{\lb_\bop}+\cF_{\rb_\bop}).
  \end{gather*}
  The composition of b-scattering ps.d.o.s (acting on b-$\half$-densities on $[0,\infty]$) then satisfies:
  \begin{alignat*}{2}
    \Psi_{\bop,\scop}^{m,(0,r)} &\circ \Psi_{\bop,\scop}^{m',(0,r')} &\subset& \Psi_{\bop,\scop}^{m+m',(0,r+r')}, \\
    \Psi_{\bop,\scop}^{m,(0,r)} &\circ \Psi_{\bop,\scop}^{-\infty,\cE} &\subset& \Psi_{\bop,\scop}^{-\infty,\cE}, \\
    \Psi_{\bop,\scop}^{-\infty,\cE} &\circ \Psi_{\bop,\scop}^{-\infty,\cF} &\subset& \Psi_{\bop,\scop}^{-\infty,\cG}.
  \end{alignat*}
\end{prop}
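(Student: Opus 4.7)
The strategy mirrors that of Proposition~\ref{PropCC0}: introduce a b-scattering triple space and apply Melrose's pullback and pushforward results. Concretely, I would define
\[
  [0,\infty]^3_{\bop,\scop} := \bigl[\,\ol{[0,\infty)^3};\ \{0\}^3,\ \{t_2{=}t_3{=}0\},\{t_1{=}t_3{=}0\},\{t_1{=}t_2{=}0\};\ \{\infty\}^3\text{ and parallel blow-ups at infinity}\bigr],
\]
chosen so that the three coordinate projections $\pi_F,\pi_S,\pi_C\colon [0,\infty]^3_{\bop,\scop}\to[0,\infty]^2_{\bop,\scop}$ (dropping $t_3,t_1,t_2$ respectively) lift to b-fibrations. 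Near the origin this is Melrose's b-triple space \cite[\S5]{MelroseAPS} and near infinity it is the scattering triple space of \cite{MelroseEuclideanSpectralTheory}; the two constructions do not interact because of the rapid vanishing imposed at the scattering corner.

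Claims~(1) and~(2) follow from the standard local symbol calculi once one decomposes each symbolic factor into a piece properly supported near $\diag_{\bop,\scop}$ plus a residual remainder. Near $\lb_\bop\cup\ff_{\bop,0}\cup\rb_\bop$ the b-symbol calculus gives composition of full symbols with order $m+m'$, and near $\ff_\scop$ the scattering-symbol calculus gives the additive scattering weight $r+r'$. The cross terms between diagonal and residual pieces, as well as the symbolic-times-residual composition of claim~(2), are reduced to claim~(3).

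For claim~(3), fix $0<\nu\in\CI([0,\infty]^2_{\bop,\scop},\Omegab^{\frac12})$ and write
\[
  K_{A\circ B} = \nu^{-1}(\pi_C)_*\bigl(\pi_F^* K_A\cdot \pi_S^* K_B\cdot \pi_C^*\nu\bigr).
\]
Lifting $K_A\in\cA_\phg^\cE$ and $K_B\in\cA_\phg^\cF$ to $[0,\infty]^3_{\bop,\scop}$ gives a polyhomogeneous b-half-density whose index set at each boundary hypersurface is the pullback sum of those of $K_A$ and $K_B$, in complete analogy with the display~\eqref{EqCCProd} in the proof of Proposition~\ref{PropCC0}. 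Since both factors vanish to infinite order at every preimage of the scattering faces, Melrose's pushforward theorem applies and yields trivial index sets at $\ff_\scop\cup\ff_{\bop,\infty}$ downstairs, while at the finite faces it yields the extended unions $\cG_{\lb_\bop},\cG_{\ff_{\bop,0}},\cG_{\rb_\bop}$ from the image structure of the blown-down faces under $\pi_C$ (the lifts of $\{t_1=0\}$ and of $\{t_1=t_2=0\}$ both map to $\lb_\bop$, producing $\cE_{\lb_\bop}\extcup\,(\cE_{\ff_{\bop,0}}+\cF_{\lb_\bop})$; the triple-corner face and the lift of $\{t_1=t_3=0\}$ both map to $\ff_{\bop,0}$, producing the stated $\cG_{\ff_{\bop,0}}$; and analogously for $\rb_\bop$). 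The only nontrivial input---and the main obstacle if one wished to carry out every detail---is verifying the b-fibration property of $\pi_F,\pi_S,\pi_C$ globally on $[0,\infty]^3_{\bop,\scop}$, which forces the specific order of blow-ups (triple corner before the three coordinate edges, and similarly at infinity); once this is in place, the index-set bookkeeping is routine.
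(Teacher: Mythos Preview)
The paper does not actually prove this proposition: immediately before stating it, the text says only that ``b-scattering-ps.d.o.s behave like b-ps.d.o.s near $\lb_\bop\cup\ff_{\bop,0}\cup\rb_\bop$ and like scattering ps.d.o.s near $\ff_\scop$'' and refers the reader to \cite[\S3]{LauterPsdoConfComp}, \cite[\S5]{MelroseAPS}, \cite{MelroseEuclideanSpectralTheory}. Your sketch is precisely the standard approach carried out in those references---a b-scattering triple space (which near $t=0$ is Melrose's b-triple space and near $t=\infty$ the scattering triple space), b-fibrations to the double space, and the pullback/pushforward theorem---so your approach and the paper's (implicit) approach coincide.

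One small point neither the paper's statement nor your sketch makes explicit: for the pushforward along $\pi_C$ in claim~(3) to be well-defined, one needs the integrability condition $\Re(\cE_{\rb_\bop}+\cF_{\lb_\bop})>0$ at the boundary face of the triple space corresponding to the middle variable $t''\to 0$ (this is the b-calculus analogue of the hypothesis $\Re(\cE_\rb+\cF_\lb)>n-1$ in Proposition~\ref{PropCC0}). In every use of the proposition in this paper one of $\cE_{\rb_\bop}$, $\cF_{\lb_\bop}$ is the empty index set, so the omission is harmless in context, but in a fully detailed write-up you would want to record it.
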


\subsection{Range of the reduced normal operator}

The range of the homomorphism from 0-ps.d.o.s to reduced normal operators is difficult to describe in a useful manner; this means that special care is required in ensuring that parametrix constructions on the level of the reduced normal operator remain in the range of the reduced normal operator. In this section, we collect several results which will facilitate this task in~\S\ref{SP}.

\begin{prop}[Reduced normal operator for the small 0-calculus]
\label{PropCNSmall}
  Fix an operator $P\in\Psi_0^m(X,\Omegazero^{\frac12}X)$. Then $\hat N(P,y_0,\hat\eta)\in\Psi_{\bop,\scop}^{m,(0,m)}([0,\infty],\Omegab^{\frac12})$, with smooth parametric dependence on $y_0\in\R^{n-1}$ and $\hat\eta\in\Sph^{n-2}$. Moreover, if $\sigmazero^m(P)|_{\Tzero^*_{y_0}X}$ is elliptic, then $\hat N(P,y_0,\hat\eta)$ is elliptic, in the sense that its principal symbol
  \begin{equation}
  \label{EqCNSmallSymbol}
    {}^{\bop,\scop}\sigma^{m,(0,m)}(\hat N(P,y_0,\hat\eta)) \in (S^{m,(0,m)}/S^{m-1,(0,m-1)})({}^{\bop,\scop}T^*[0,\infty])
  \end{equation}
  is invertible.\footnote{In particular, in $T=t^{-1}<1$, the operator $\hat N(P,y_0,\hat\eta)$ is fully elliptic as a scattering ps.d.o.} Finally, the indicial operator $I(P,y_0)$ of $\hat N(P,y_0,\hat\eta)$ at $t=0$ is independent of $\hat\eta\in\Sph^{n-2}$.
\end{prop}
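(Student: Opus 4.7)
My plan is to prove Proposition~\ref{PropCNSmall} by direct analysis of the formula~\eqref{EqCNGoodNOpResc}. Fix local coordinates $(\rho,y',\tau,Y)$ from~\eqref{EqCNGoodCoord} near $\ff\setminus(\lb\cup\rb)$ and write the Schwartz kernel of $P$ as in~\eqref{Eq0GoodSK}; since $P\in\Psi_0^m(X,\Omegazero^{\frac12}X)$, the coefficient $p$ is a conormal distribution of order $m$ at the 0-diagonal $\{\tau=Y=0\}$, Schwartz as $\tau\to\pm 1$ and as $|Y|\to\infty$, with smooth dependence on $\rho,y'$. The first step is to establish structural properties of the partial Fourier transform $\hat p(y_0,\tau,\eta)$ from~\eqref{EqCNGoodNOp}: writing $p(0,y_0,\tau,Y)$ as an oscillatory integral in dual variables $(\sigma,\eta)$ with amplitude a classical symbol of order $m$, one reads off that $\hat p$ is (i) a classical symbol of order $m$ in $\eta$, (ii) smooth in $\tau\in(-1,1)\setminus\{0\}$ with a conormal singularity at $\tau=0$, and (iii) Schwartz as $\tau\to\pm 1$ jointly in $\eta$, with smooth parametric dependence on $y_0\in\R^{n-1}$.

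In the second step, I would use coordinates $\hat\tau=(t-t')/(t+t')$ and $\hat\rho=t+t'$ on $[0,\infty]^2_{\bop,\scop}$ near the interior of $\ff_{\bop,0}\cup\ff_\scop$, in which~\eqref{EqCNGoodNOpResc} exhibits $\hat N(P,y_0,\hat\eta)$ as a smooth nonvanishing multiple of $\hat p(y_0,\hat\tau,\hat\rho\hat\eta)$ times the b-$\frac12$-density $\bigl|\frac{\dd t}{t}\frac{\dd t'}{t'}\bigr|^{\frac12}$. Properties (i)--(iii) then transfer to the structural properties of $\hat N$ required by Definition~\ref{DefCNbscPsdo}: (i) and (ii) together, combined with the symbolic interpretation of scattering ps.d.o.s applied to $\hat p$ at scattering frequency $\hat\rho\hat\eta$ (of magnitude $\hat\rho$ for $|\hat\eta|=1$), give the conormal singularity of order $m$ at $\diag_{\bop,\scop}=\{\hat\tau=0\}$ and the weight $\rho_{\ff_\scop}^{-m-1}$ at $\ff_\scop$—the extra $-1$ arising from the density shift $|\dd t\,\dd t'|^{\frac12}=(tt')^{\frac12}\bigl|\frac{\dd t}{t}\frac{\dd t'}{t'}\bigr|^{\frac12}$ recorded after Definition~\ref{DefCNbscPsdo}. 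Smoothness of $\hat p$ at $\eta=0$ gives smoothness of the kernel down to $\ff_{\bop,0}=\{\hat\rho=0\}$, and (iii) yields rapid vanishing of the kernel at $\lb_\bop$, $\rb_\bop$, and $\ff_{\bop,\infty}$; smooth parametric dependence on $\hat\eta\in\Sph^{n-2}$ follows from the classical symbol structure of $\hat p$.

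For ellipticity, the leading symbol of $\hat p$ as $|\eta|\to\infty$ controls both the principal conormal singularity at $\hat\tau=0$ in the interior and the leading behavior at $\ff_\scop$; via the identification~\eqref{EqIConormal} of $N^*\diag_0\cong\Tzero^*X$, this leading symbol equals $\sigmazero^m(P)|_{\Tzero^*_{y_0}X}$, so its ellipticity produces invertibility of the b-scattering principal symbol~\eqref{EqCNSmallSymbol}, both at the diagonal and in the scattering sense at $\ff_\scop$. Finally, the $\hat\eta$-independence of the indicial operator is immediate: restricting~\eqref{EqCNGoodNOpResc} to $\ff_{\bop,0}$ sets $\hat\rho=0$ in the argument of $\hat p$, leaving $\hat p(y_0,\hat\tau,0)=\int p(0,y_0,\hat\tau,Y)\,\dd Y$, which has no $\hat\eta$-dependence. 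The main delicacy is the density bookkeeping at $\ff_\scop$ converting the symbol order $m$ of $\hat p$ in $\eta$ into the $-m-1$ b-$\frac12$-density weight prescribed by Definition~\ref{DefCNbscPsdo}; as is standard in the b-scattering calculus, this reflects the Fourier-dual identification of scattering order with growth in the variable conjugate to the defining function of $\ff_\scop$.
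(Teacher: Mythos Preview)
Your overall approach matches the paper's: represent $\hat p(y_0,\tau,\eta)=(2\pi)^{-1}\int e^{i\tau\xi}a(\xi,\eta)\,\dd\xi$ with $a\in S^m(\R^n_{(\xi,\eta)})$ coming from the oscillatory-integral representation of $p(0,y_0,\tau,Y)$, and then read off the b-scattering structure of $\hat N(P,y_0,\hat\eta)$ from~\eqref{EqCNGoodNOpResc}. The b-behavior at $\ff_{\bop,0}$, the rapid vanishing at $\lb_\bop\cup\rb_\bop$, and the $\hat\eta$-independence of $I(P,y_0)$ are all correctly handled.

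The gap is in your treatment of $\ff_\scop$. Property (i) as stated is false: for fixed $\tau\neq 0$, integration by parts in $\xi$ shows $\hat p(y_0,\tau,\eta)$ is \emph{rapidly decreasing} in $\eta$, not a symbol of order $m$; the growth in $\eta$ is concentrated in a $|\eta|^{-1}$-neighborhood of $\tau=0$. Correspondingly, the coordinates $(\hat\tau,\hat\rho)$ do not resolve $\ff_\scop$---they are valid only on $[0,\infty]^2_{\bop,\scop}$ away from the front face of the blow-up at $(\hat\tau,\hat\rho)=(0,\infty)$. The paper passes to the genuine scattering coordinates $T=(t+t')^{-1}$, $s=t-t'$ (so $\hat\tau=s T$) near $\ff_\scop$ and substitutes $\sigma=T\xi$ to obtain
\[
  T^{m+1}\hat p(y_0,s T,T^{-1}\hat\eta)=(2\pi)^{-1}\int e^{i s\sigma}\,a_\scop(T,\sigma)\,\dd\sigma,\qquad a_\scop(T,\sigma):=T^m a(T^{-1}\sigma,T^{-1}\hat\eta),
\]
after which one checks directly that $|(T\pa_T)^j\pa_\sigma^k a_\scop|\lesssim\la\sigma\ra^{m-k}$, conormal down to $T=0$. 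The extra factor $T^{-1}$ (making the exponent $-m-1$ rather than $-m$) arises from the Jacobian $\dd\xi=T^{-1}\,\dd\sigma$, not from a density shift: the b-$\tfrac12$-density $\bigl|\frac{\dd t}{t}\frac{\dd t'}{t'}\bigr|^{\frac12}$ is a nonvanishing section of $\KD_\bop$ and contributes no weight at $\ff_\scop$. (The remark after Definition~\ref{DefCNbscPsdo} explains why the \emph{definition} uses exponent $-r-1$, so that the identity lies in $\Psi_{\bop,\scop}^{0,(0,0)}$; that is a normalization convention, not the mechanism producing the weight of $\hat N(P)$.) The same rescaled symbol yields the ellipticity at $\ff_\scop$ via $|a_\scop(T,\sigma)|\geq c T^m(1+T^{-1}|\sigma|+T^{-1})^m\geq c(1+|\sigma|)^m$ for $T$ small, and gives rapid vanishing at $\ff_{\bop,\infty}$ as the $|s|\to\infty$ decay of the inverse Fourier transform of a symbol; this last point is not a consequence of your (iii), which only concerns $\tau\to\pm 1$.
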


Here, ${}^{\bop,\scop}T^*[0,\infty]$ has local frame $\frac{\dd t}{t}$ in $t\leq 2$ and $\dd t=-\frac{\dd T}{T^2}$ in $T=t^{-1}\leq 1$. Moreover, $S^{m,(0,r)}({}^{\bop,\scop}T^*[0,\infty])$ is the space of symbols of order $m$ which are smooth down to $t=0$ and conormal with weight $(t^{-1})^{-r}$ at $t^{-1}=0$. We also used the following terminology:

\begin{definition}[Indicial operator]
\label{DefCNbnorm}
  The \emph{indicial operator} (or \emph{b-normal operator}) $I(P,y_0)$ of $\hat N(P,y_0,\hat\eta)$ is the dilation-invariant extension of the restriction $N(P,y_0,\hat\eta)|_{\ff_{\bop,0}}$ to a b-ps.d.o.\ acting on $\half$-b-densities on $[0,\infty]$; thus, its Schwartz kernel is
  \begin{equation}
  \label{EqCNbNormal}
    I(P,y_0)(t,t') = \Bigl(\frac{t'}{t+t'}\Bigr)^{-(n-1)}\hat p\Bigl(y_0,\frac{t-t'}{t+t'},0\Bigr)\,\Bigl|\frac{\dd t}{t}\frac{\dd t'}{t'}\Bigr|^{\frac12} = \wh{K_P^0}\Bigl(\frac{t}{t'},0,0,y_0\Bigr)\,\Bigl|\frac{\dd t}{t}\frac{\dd t'}{t'}\Bigr|^{\frac12}
  \end{equation}
  in the notation of~\eqref{Eq0GoodSK}, \eqref{EqCNGoodNOp} and~\eqref{Eq0Op}, \eqref{EqCNOpTransf}.
\end{definition}

\begin{proof}[Proof of Proposition~\usref{PropCNSmall}]
  Write $P$ as in~\eqref{Eq0GoodSK}; we restrict $p$ to the 0-front face $\rho=0$ and fix $y'=y_0$. Thus, $p|_{\ff_{y_0}}=p(0,y_0,\tau,Y)$ is an oscillatory integral
  \[
    p(0,y_0,\tau,Y) = (2\pi)^{-n}\iint e^{i(\tau\xi+Y\cdot\eta)} a(\xi,\eta)\,\dd\xi\,\dd\eta
  \]
  for some symbol $a\in S^m(\R^n_{(\xi,\eta)})$; moreover, $p$ is smooth away from $(\tau,Y)=(0,0)$ and vanishes to infinite order as $\tau\to\pm 1$ or $|Y|\to\infty$. We then have
  \begin{equation}
  \label{EqCNSmallFT}
    \hat p(y_0,\tau,\eta) = (2\pi)^{-1}\int e^{i\tau\xi}a(\xi,\eta)\,\dd\xi.
  \end{equation}
  This is a smooth function of $\eta$ with values in distributions on $(-1,1)_\tau$ which are conormal (of order $m$) to $0$ and vanish to infinite order at $\tau=\pm 1$. The formula~\eqref{EqCNGoodNOpResc} thus shows that $\hat N(P,y_0,\hat\eta)$ lies in $\Psib^m([0,\infty)_t)$. Note also that if $(\xi,\eta)\mapsto a(\xi,\eta)$ is an elliptic symbol, then also $\xi\mapsto a(\xi,\eta)$ is elliptic for any fixed $\eta\in\R^{n-1}$.

  Near $\ff_\scop^\circ\subset[0,\infty]_{\bop,\scop}^2$, we pass to the coordinates
  \[
    T = (t+t')^{-1},\qquad
    s = \frac{t-t'}{t+t'} \Big/ T = t-t',
  \]
  i.e.\ $t=\half(T^{-1}+s)$ and $t'=\half(T^{-1}-s)$. Thus, $T$ is a local defining function of $\ff_\scop^\circ$, and $\diag_{\bop,\scop}=\{s=0\}$. We then compute the Schwartz kernel of $T^{m+1}\hat N(P,y_0,\hat\eta)$ to be
  \[
    (T,s) \mapsto \bigl(\half(1-T s)\bigr)^{-(n-1)} T^{m+1} \hat p(y_0,s T, T^{-1}\hat\eta)\,\Bigl|\frac{\dd t}{t}\frac{\dd t'}{t'}\Bigr|^{\frac12}.
  \]
  But changing variables via $T\xi=\sigma$ in~\eqref{EqCNSmallFT}, we obtain
  \begin{align*}
    T^{m+1}\hat p(y_0,s T,T^{-1}\hat\eta) &= (2\pi)^{-1}\int e^{i s T \xi} T^{m+1}a(\xi,T^{-1}\hat\eta)\,\dd\xi \\
      &= (2\pi)^{-1} \int e^{i s\sigma} T^m a(T^{-1}\sigma,T^{-1}\hat\eta)\,\dd\sigma.
  \end{align*}
  The map $a_\scop(\hat\eta;T,\sigma):=T^m a(T^{-1}\sigma,T^{-1}\hat\eta)$ obeys the conormal (at $T=0$) symbolic (in $\sigma$) bounds $|(T\pa_T)^j \pa_\sigma^k a_\scop| \lesssim \la\sigma\ra^{m-k-|\alpha|}$, as do its derivatives in $\hat\eta$. We have thus proved that $\hat N(P,y_0,\hat\eta)\in\Psi_{\bop,\scop}^{m,(0,m)}([0,\infty],\Omegab^{\frac12})$, with smooth dependence on $y_0$, $\hat\eta$. (The rapid vanishing at $\ff_{\bop,\infty}=s^{-1}(\{\pm\infty\})$ is a consequence of the fact that the inverse Fourier transform of a symbol vanishes rapidly at infinity.)
  
  Moreover, if $a(\sigma,\eta)$ is elliptic, i.e.\ is bounded from below by $c(1+|\sigma|+|\eta|)^m$ for $|\sigma|+|\eta|>C$ for some constants $c,C>0$, then also
  \[
    |a_\scop(\hat\eta;T,\sigma)| \geq c T^m (1 + T^{-1}|\sigma| + T^{-1})^m \geq c(1+|\sigma|)^m
  \]
  provided $|\sigma|+1>C T$, and thus for all $\sigma\in\R$ when $T<C^{-1}$. Together with the ellipticity of $\hat N(P,y_0,\hat\eta)$ near $t=0$, this proves the ellipticity of~\eqref{EqCNSmallSymbol}.
\end{proof}

Recall that parametrix constructions in the b-calculus require the inversion of the b-normal operator, which in the present context concretely concerns the indicial operator $I(P,y_0)$ from Definition~\ref{DefCNbnorm}. The \emph{indicial family} for a fixed boundary point $y_0$ is the Mellin transform of $I(P,y_0)|_{\ff_{\bop,0}}$ (which can be identified with a b-$\half$-density on $\ff_{\bop,0}$, see \cite[\S4.15]{MelroseAPS}) in the projective coordinate $s=t/t'\in[0,\infty]$ along $\ff_{\bop,0}$, and hence (upon dropping the $\half$-density factor $|\dd\sigma|^{\frac12}$) explicitly given by
\[
  I(P,y_0,\sigma) := \int_0^\infty s^{i\sigma}\wh{K_P^0}(s,0,0,y_0)\,\frac{\dd s}{s}.
\]

\begin{lemma}[Properties of the indicial family]
\label{LemmaCNbsc2Ind}
  Let $P\in\Psi_0^m(X,\Omegazero^{\frac12}X)$. Then the indicial family $I(P,y_0,\sigma)$ is holomorphic in $\sigma\in\C$. Furthermore, if $P$ is elliptic, then for any $C_1>0$ there exist $c,C_2>0$ so that
  \begin{equation}
  \label{EqCNbsc2Ind}
    |I(P,y_0,\sigma)|>c|\sigma|^m,\qquad |\Im\sigma|<C_1,\ |\Re\sigma|>C_2.
  \end{equation}
\end{lemma}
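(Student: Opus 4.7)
The plan is to recast the indicial family as a Fourier transform of a compactly supported conormal distribution plus a Schwartz contribution, and then read off both claims from this representation. Substituting $u = \log s$ in the definition of $I(P, y_0, \sigma)$ yields
\[
  I(P, y_0, \sigma) = \int_\R e^{iu\sigma} f(u) \, \dd u, \qquad f(u) := \wh{K_P^0}(e^u, 0, 0, y_0).
\]
The small-0-calculus structure of $K_P^0$ forces $f$ to be a conormal distribution of order $m$ at $u = 0$ (the image of the 0-diagonal $s = 1$, $\Ups = 0$) and to vanish to infinite order as $u \to \pm\infty$ (the images of $\lb$, $\rb$, $\ff_\bop$): indeed, taking the Fourier transform of $K_P^0$ in $\Ups$ and evaluating at $\eta = 0$ preserves smoothness and rapid decay in $s$, and the substitution $s = e^u$ has smooth positive Jacobian for $s > 0$.

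Holomorphy then follows at once: $|e^{iu\sigma}| = e^{-u \Im \sigma}$ is at most exponential in $u$, hence dominated by the faster-than-exponential decay of $f$, so the integral converges absolutely for every $\sigma \in \C$ and may be differentiated in $\sigma$ under the integral sign. For the symbolic upper bound $|I(P, y_0, \sigma)| \leq C \langle \sigma \rangle^m$ in a strip $|\Im \sigma| \leq C_1$, I would write $f = \chi f + (1 - \chi) f$ with $\chi \in \CIc(\R)$ equal to $1$ near $0$: the Fourier transform of the compactly supported conormal piece $\chi f$ belongs to $S^m(\R_\sigma)$ and extends holomorphically to any strip (the complex $\sigma$ only contributes the bounded factor $e^{-u \Im \sigma}$ on $\supp \chi$), while $(1-\chi)f$ is Schwartz and contributes a rapidly decaying term.

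For the lower bound, the key point is to identify the principal part of $f$ at $u = 0$ with the restriction of the symbol of $P$ to $\eta = 0$. By the formula \eqref{EqCNSmallFT} in the proof of Proposition~\ref{PropCNSmall},
\[
  \hat p(y_0, \tau, 0) = (2\pi)^{-1} \int_\R e^{i\tau\xi} a(\xi, 0) \, \dd\xi,
\]
that is, $\hat p(y_0, \cdot, 0)$ is a conormal distribution at $\tau = 0$ whose Fourier transform is $a(\,\cdot\,, 0)$, an elliptic symbol of order $m$ in $\xi$ since $P$ is elliptic. Composing with the smooth positive prefactor $(1 + s)^{n - 1}$ from \eqref{EqCNbNormal} and the diffeomorphism $\tau = \tanh(u/2)$ (whose derivative at $u = 0$ equals $\tfrac12$) alters the principal symbol of the conormal distribution only by a nonvanishing multiplicative constant and a linear rescaling of the dual variable. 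Consequently the principal symbol of $f$ at $u = 0$ is elliptic of order $m$ in $\sigma$, which gives $|I(P, y_0, \sigma)| \geq c |\sigma|^m$ for real $\sigma$ with $|\sigma|$ large; the extension to the strip $|\Im \sigma| < C_1$ follows from the holomorphy combined with the uniform $\langle \sigma \rangle^{m-1}$ control on the lower-order remainder in the strip.

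The main obstacle will be the explicit bookkeeping of constants, Jacobians, and b- versus 0-density conventions needed to identify the leading conormal part of $f$ at $u = 0$ with the symbol $a(\,\cdot\,, 0)$ through the chain of coordinate changes $(s, \Ups, x', y') \leftrightarrow (\rho, y', \tau, Y) \leftrightarrow (t, t')$. The conceptual content---that the Mellin transform of a dilation-invariant elliptic b-ps.d.o.\ is an entire elliptic family of symbols in $\sigma$---is standard; the work lies in making the identification visible in the 0-calculus notation.
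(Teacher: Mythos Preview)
Your proof is correct and follows the same idea as the paper: represent the indicial family as the Fourier transform in $\log s$ of a conormal distribution whose full symbol restricts to $a(\,\cdot\,,0)$, and read off holomorphy and ellipticity from there. The paper's execution is slightly more direct on the two points you flag as obstacles. First, it writes the oscillatory integral for $K_P^0|_{\ff_{y_0}}$ directly in the coordinates $(\log s,Y)$ rather than $(\tau,Y)$, so the Mellin transform literally inverts the $\sigma$-integral and one gets the exact identity $I(P,y_0,\sigma)=a(y_0,\sigma,0)$ with no Jacobians, prefactors, or rescalings to chase. Second, for the extension to a strip, the paper simply observes that $s^\alpha K_P^0(s,Y,0,y_0)$ is again the inverse Fourier transform of an elliptic symbol (same principal part), so the real-$\sigma$ argument applies on each horizontal line $\Im\sigma=\alpha$; this avoids the principal-plus-remainder splitting you use, though both work.
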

\begin{proof}
  The function $s\mapsto\wh{K_P^0}(s,0,0,y_0)$ vanishes to infinite order at $s=0$ and $s=\infty$, which implies the holomorphicity of $I(P,y_0,\sigma)$. When $P$ is elliptic, we write
  \[
    K_P^0(s,Y,0,y_0) = (2\pi)^{-n}\iint e^{i(\sigma\log s+\eta\cdot Y)}a(y_0,\sigma,\eta)\,\dd\sigma\,\dd\eta
  \]
  where $a$ is an elliptic symbol of order $m$. Thus, $\wh{K_P^0}(s,0,0,y_0)=(2\pi)^{-1}\int s^{i\sigma} a(y_0,\sigma,0)\,\dd\sigma$ has Mellin transform $I(P,y_0,\sigma)=a(y_0,\sigma,0)$ which thus satisfies~\eqref{EqCNbsc2Ind} for real $\sigma$. For $\Im\sigma=\alpha$ with arbitrary $\alpha\in\R$, the estimate~\eqref{EqCNbsc2Ind} follows from the fact that also $s^\alpha K_P^0(s,Y,0,y_0)$ is the inverse Fourier transform of an elliptic symbol.
\end{proof}

The boundary spectrum
\[
  \Specb(P,y_0) \subset \C\times\N_0
\]
is defined by~\eqref{EqISpecb}.

\begin{cor}[Index set from boundary spectrum]
\label{CorCNInd}
  Suppose $P\in\Psi_0^m(X,\Omegazero^{\frac12}X)$ is elliptic, and let $\alpha\in\R$. Let $\cE\subset\C\times\N_0$ the smallest set containing $\{(z,k)\in\Specb(P,y_0)\colon\Re z>\alpha\}$ which has the properties that $(z,k)\in\cE$ implies $(z+1,k)\in\cE$ and (when $k\geq 1$) $(z,k-1)\in\cE$. Then $\cE$ is an index set (as defined before Definition~\usref{DefIFull}). The same is true if instead $\cE$ is required to contain $\{(-z,k)\colon(z,k)\in\Specb(P,y_0),\ \Re z<\alpha\}$.
\end{cor}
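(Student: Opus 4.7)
The only nontrivial content in the definition of an index set is the finiteness condition $\#\{(z,k)\in\cE\colon\Re z<C\}<\infty$ for every $C\in\R$; closure under $(z,k)\mapsto(z+1,k)$ and $(z,k)\mapsto(z,k-1)$ holds by construction of $\cE$ as a smallest such set. The plan is to reduce the finiteness to a statement about zeros of the indicial family in a bounded horizontal strip, and then invoke Lemma~\ref{LemmaCNbsc2Ind}.

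First I would note that the smallest set $\cE$ with the stated closure properties generated by a starting set $S\subset\C\times\N_0$ admits the explicit description
\[
  \cE = \{(z+n,j)\colon (z,k)\in S,\ n\in\N_0,\ 0\leq j\leq k\}.
\]
Hence, for fixed $C$, an element $(w,j)\in\cE$ with $\Re w<C$ is of the form $w=z+n$ with $(z,k)\in S$, $0\leq j\leq k$, and $n\in\N_0$ satisfying $n<C-\Re z$. In the first case $S=\{(z,k)\in\Specb(P,y_0)\colon\Re z>\alpha\}$, this forces $\alpha<\Re z<C$ and $0\leq n<C-\alpha$, so the counting reduces to showing
\[
  \#\{(z,k)\in\Specb(P,y_0)\colon\alpha<\Re z<C\}<\infty,
\]
with each such $k$ finite. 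The second case is analogous, with $S=\{(-z,k)\colon(z,k)\in\Specb(P,y_0),\ \Re z<\alpha\}$ and the constraint becoming $-C+n<\Re z<\alpha$.

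The key step is then the bounded-strip finiteness of the boundary spectrum. By Definition~\eqref{EqISpecb}, an element $(z,k)\in\Specb(P,y_0)$ corresponds to a zero of $I(P,y_0,\sigma)$ at $\sigma=-i z$ of order at least $k+1$; in particular, $\Re z$ lying in a bounded interval $(\alpha,C)$ (or $(-C,\alpha)$) translates to $\Im\sigma=-\Re z$ lying in the bounded interval $(-C,-\alpha)$ (or $(-\alpha,C)$). Lemma~\ref{LemmaCNbsc2Ind} gives that $I(P,y_0,\sigma)$ is entire, and for any $C_1>\max(|\alpha|,|C|)$ there is $C_2>0$ such that $|I(P,y_0,\sigma)|\geq c|\sigma|^m>0$ on the part of the strip $|\Im\sigma|<C_1$ where $|\Re\sigma|>C_2$. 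Consequently the zeros of $I(P,y_0,\sigma)$ in any such bounded horizontal strip are confined to a compact subset; being isolated (by holomorphy and non-triviality), there are only finitely many of them, and each has finite order.

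Combining the two steps, the finiteness of $\cE\cap\{\Re z<C\}$ follows by summing the finitely many contributions $(z,k)$ from the finite set above against the finitely many admissible pairs $(n,j)$ for each. I do not anticipate a genuine obstacle here: the only substantive input is Lemma~\ref{LemmaCNbsc2Ind}, which has already been proved, and the rest is bookkeeping based on the explicit description of the smallest closed set.
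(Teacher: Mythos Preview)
Your proposal is correct and follows essentially the same route as the paper's proof: both reduce the finiteness condition on $\cE$ to the finiteness of $\Specb(P,y_0)$ in a bounded strip, and both invoke Lemma~\ref{LemmaCNbsc2Ind} for that finiteness. The paper's proof is simply terser (two sentences), leaving implicit the bookkeeping you spell out---the explicit description of the generated set and the translation between $\Re z$-strips and $\Im\sigma$-strips.
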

\begin{proof}
  Lemma~\ref{LemmaCNbsc2Ind} implies that for any $C$, the set $\{(z,k)\in\Specb(P,y_0)\colon\alpha<\Re z<C\}$ is finite, as is the set $\{(z,k)\in\Specb(P,y_0)\colon-C<\Re z<\alpha\}$. This implies the claim.
\end{proof}

For special classes of residual operators, it is possible to give a full characterization of the range of the reduced normal operator map:

\begin{lemma}[Reduced normal operator for residual extended 0-ps.d.o.s]
\label{LemmaCNResidual}
  Let $\cE_{\lb'},\cE_{\rb'}\subset\C\times\N_0$ denote two index sets, and put $\cE=(\cE_{\lb'},\N_0,\emptyset,\cE_{\rb'})$ (i.e.\ the index set at $\ff_\bop$ is trivial). Then for $P\in\Psi_{0'}^{-\infty,\cE}(X,\Omegazero^{\frac12}X)$, we have
  \[
    \hat N(P,y_0,\hat\eta) \in \CI\bigl(\Sph^{n-2}_{\hat\eta};\Psi_{\bop,\scop}^{(\cE_{\lb'},\N_0,\cE_{\rb'}-(n-1))}([0,\infty],\Omegab^{\frac12})\bigr),
  \]
  and in fact the function $\hat p(y_0,\tau,\eta)$ defined in terms of $\hat N(P,y_0,\hat\eta)$ for $\eta\neq 0$ by~\eqref{EqCNRecover} extends across $\eta=0$ to an element
  \begin{equation}
  \label{EqCNResidualSchwartz}
    \hat p(y_0,\cdot,\cdot) \in \sS\bigl(\R^{n-1}_\eta; \cA_\phg^{(\cE_{\lb'},\cE_{\rb'})}([-1,1]_\tau)\bigr).
  \end{equation}
  Conversely, any family of operators $N\in\CI\bigl(\Sph^{n-2}_{\hat\eta};\Psi_{\bop,\scop}^{(\cE_{\lb'},\N_0,\cE_{\rb'}-(n-1))}\bigr)([0,\infty],\Omegab^{\frac12}))$ with the property that $(-1,1)\times(\R^{n-1}\setminus\{0\})\ni(\tau,\eta)\mapsto(\frac{1-\tau}{2})^{n-1}(|\frac{\dd t}{t}\frac{\dd t'}{t'}|^{-\frac12}N(\frac{\eta}{|\eta|}))(\half|\eta|(1+\tau),\half|\eta|(1-\tau))$ extends to an element of the space in~\eqref{EqCNResidualSchwartz} is the reduced normal operator of an element of $\Psi_{0'}^{-\infty,\cE}(X,\Omegazero^{\frac12}X)$.
\end{lemma}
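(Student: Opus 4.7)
The plan is to work in local coordinates $(\rho,y',\tau,Y)$ from~\eqref{EqCNGoodCoord} and exploit the explicit relationship~\eqref{EqCNGoodNOpResc} between $\hat N$ and the function $\hat p$, together with the recovery formula~\eqref{EqCNRecover}.

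For the forward direction, I would begin by writing $P$ locally as in~\eqref{Eq0GoodSK}. The hypothesis on $\cE$ translates into the following properties of the symbol $p(\rho,y',\tau,Y)$: smoothness at $\rho=0$ (from $\cE_{\ff'}=\N_0$), rapid vanishing as $|Y|\to\infty$ (from $\cE_{\ff_\bop}=\emptyset$, recalling that $\la Y\ra^{-1}$ is a local defining function of $\ff_\bop$ in $X^2_{0'}$), and polyhomogeneity at $\tau=\mp 1$ with index sets $\cE_{\lb'}$, $\cE_{\rb'}$. Restricting to $\rho=0$ and Fourier-transforming in $Y$ yields~\eqref{EqCNResidualSchwartz} directly; this extension across $\eta=0$ also matches the function recovered via~\eqref{EqCNRecover} for $\eta\neq 0$ by Fourier inversion. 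I would then read off the structure of $\hat N(P,y_0,\hat\eta)$ face-by-face from~\eqref{EqCNGoodNOpResc}, which in the coordinates $\hat\tau=(t-t')/(t+t')$, $\hat\rho=t+t'$ becomes $2^{n-1}(1-\hat\tau)^{-(n-1)}\hat p(y_0,\hat\tau,\hat\rho\hat\eta)\bigl|\tfrac{\dd t}{t}\tfrac{\dd t'}{t'}\bigr|^{\frac12}$. The index set $\cE_{\lb'}$ at $\lb_\bop$ is inherited from $\hat p$; at $\rb_\bop$ the prefactor $(1-\hat\tau)^{-(n-1)}$ produces the shift $\cE_{\rb'}-(n-1)$; at $\ff_{\bop,0}$ the restriction $\hat p(y_0,\hat\tau,0)$ is smooth, giving index set $\N_0$; at $\ff_\scop$ and $\ff_{\bop,\infty}$ the Schwartz decay of $\hat p$ in its third argument, combined with $\hat\rho\hat\eta\to\infty$, yields the required rapid vanishing. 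Smooth dependence on $\hat\eta\in\Sph^{n-2}$ is inherited from smoothness of $\hat p$ in $\eta$.

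For the converse, given $N$ satisfying the hypotheses, I would use~\eqref{EqCNRecover} to define $\hat p(y_0,\tau,\eta)$ for $\eta\neq 0$; by assumption this extends to an element of $\sS(\R^{n-1}_\eta;\cA_\phg^{(\cE_{\lb'},\cE_{\rb'})}([-1,1]_\tau))$. Inverse Fourier transform in $\eta$ yields a front-face datum $p_0(y',\tau,Y)\in\sS(\R^{n-1}_Y;\cA_\phg^{(\cE_{\lb'},\cE_{\rb'})}([-1,1]_\tau))$ depending smoothly on $y'\in\pa X$. I would extend $p_0$ to $p(\rho,y',\tau,Y):=\chi(\rho)p_0(y',\tau,Y)$ for a cutoff $\chi\in\CIc([0,1))$ with $\chi\equiv 1$ near $0$, and patch the resulting locally defined operators using a partition of unity subordinate to a cover of $\pa X$ by coordinate charts. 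The resulting element of $\Psi_{0'}^{-\infty,\cE}(X,\Omegazero^{\frac12}X)$ has reduced normal operator $N$ by construction.

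The main obstacle I anticipate is the careful bookkeeping of density factor conversions and weight shifts: between the 0-density normalization of~\eqref{Eq0GoodSK} and the b-$\half$-density normalization of Definition~\ref{DefCNbscPsdo} (where the $-1$ shift at $\ff_\scop$ plays a role), and the $(n-1)$ shift at $\rb_\bop$ coming from the prefactor in~\eqref{EqCNGoodNOpResc}. A related subtlety is simultaneously verifying the rapid vanishing at $\ff_\scop$ and $\ff_{\bop,\infty}$, which requires passing to the adapted coordinates $(T,s)=((t+t')^{-1},t-t')$ near $\ff_\scop$ used in the proof of Proposition~\ref{PropCNSmall} and confirming that Schwartz behavior of $\hat p$ in a Euclidean variable lifts to rapid vanishing at both spherical-boundary faces of the b-scattering double space.
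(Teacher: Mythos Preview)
Your proposal is correct and follows the same approach as the paper, which simply states that the claims follow directly from the definition~\eqref{EqCNGoodNOpResc} (for the forward direction) and the formula~\eqref{EqCNRecover} (for the converse), noting the $(n-1)$-shift at $\rb_\bop$ from the prefactor $(t'/(t+t'))^{-(n-1)}$. Your write-up is a faithful expansion of exactly that argument, with the face-by-face bookkeeping and the density conversions made explicit.
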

\begin{proof}
  This follows directly from the definition~\eqref{EqCNGoodNOpResc} of $\hat N(P,y_0,\hat\eta)$ (for the first part) and the formula~\eqref{EqCNRecover} (for the second part). The shift of the index set at $\rb_\bop$ comes from the factor $(\frac{t'}{t+t'})^{-(n-1)}$; note here that $\frac{t'}{t+t'}$ is a defining function of $\rb_\bop$.
\end{proof}

\begin{rmk}[Decay at $\ff_\bop$ and regularity in $\eta$]
\label{RmkCNDecReg}
  When the Schwartz kernel of an element $P\in\Psi_{0'}^{-\infty,(\cE_{\lb'},\N_0,\cE_{\ff_\bop},\cE_{\rb'})}(X,\Omegazero^{\frac12}X)$ has nontrivial index set $\cE_{\ff_\bop}$ at $\ff_\bop$, then $\hat p(y_0,\tau,\eta)$ is typically singular (albeit conormal) at $\eta=0$. Since the Schwartz kernel of $\hat N(P,y_0,\hat\eta)$ is defined only with reference to $\eta\neq 0$, possible (differentiated) $\delta$-distributional contributions to $\hat p(y_0,\tau,\eta)$ at $\eta=0$ are lost when passing to the transformed or reduced normal operator. This problem does not occur however when $\Re\cE_{\ff_\bop}>0$, and this will always be the case in this paper; thus, under this condition, if all reduced normal operators of $P$ are zero, then $P$ vanishes to leading order at $\ff'$, i.e.\ $P\in\Psi_{0'}^{-\infty,(\cE_{\lb'},\N_0+1,\cE_{\ff_\bop},\cE_{\rb'})}(X,\Omegazero^{\frac12}X)$. For $P\in\Psi_0^{-\infty,(\cE_\lb,\N_0,\cE_\rb)}(X,\Omegazero^{\frac12}X)$, the corresponding condition is $\Re(\cE_\lb+\cE_\rb)>0$ by~\eqref{EqCRel}.
\end{rmk}

\begin{cor}[0-operator from indicial operator]
\label{CorCN0fromInd}
  Let $\cE_0,\cE_1\subset\C\times\N_0$ be two index sets. Suppose $a\in\cA_\phg^{(\cE_0,\cE_1)}([-1,1])$. Then there exists $P\in\Psi_{0'}^{-\infty,(\cE_0,\N_0,\emptyset,\cE_1+(n-1))}(X,\Omegazero^{\frac12}X)$ with $I(P,y_0)(t,t')=a(\frac{t-t'}{t+t'})|\frac{\dd t}{t}\frac{\dd t'}{t'}|^{\frac12}$. If $a$ in addition depends smoothly on a parameter $y\in\R^{n-1}$, so $a=a(y)$, then one can find a single such $P$ with $I(P,y)=a(y)$ for all $y$.
\end{cor}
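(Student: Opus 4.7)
The strategy is to exhibit an explicit function $\hat p(y_0,\tau,\eta)$ and invoke the converse direction of Lemma~\ref{LemmaCNResidual} to manufacture $P$; the indicial operator can then be read off from~\eqref{EqCNbNormal} by inspection. Substituting $\tau=\frac{t-t'}{t+t'}$ gives $\frac{t'}{t+t'}=\frac{1-\tau}{2}$, so matching $I(P,y_0)(t,t')=a(\tau)\bigl|\frac{\dd t}{t}\frac{\dd t'}{t'}\bigr|^{\frac12}$ against~\eqref{EqCNbNormal} reduces to the single requirement
\[
  \hat p(y_0,\tau,0)=\Bigl(\frac{1-\tau}{2}\Bigr)^{n-1}a(\tau).
\]

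I would therefore fix any $\chi\in\sS(\R^{n-1}_\eta)$ with $\chi(0)=1$ and set
\[
  \hat p(y_0,\tau,\eta):=\chi(\eta)\,\Bigl(\frac{1-\tau}{2}\Bigr)^{n-1}a(\tau).
\]
Since $a\in\cA_\phg^{(\cE_0,\cE_1)}([-1,1])$ while $(\frac{1-\tau}{2})^{n-1}$ is smooth on $[-1,1]$, nonvanishing at $\tau=-1$, and vanishing to exact order $n-1$ at $\tau=1$, the product is polyhomogeneous with index set $\cE_0$ at $\tau=-1$ and $\cE_1+(n-1)$ at $\tau=1$. Tensoring with $\chi(\eta)$ places $\hat p(y_0,\cdot,\cdot)$ in $\sS\bigl(\R^{n-1}_\eta;\cA_\phg^{(\cE_0,\cE_1+(n-1))}([-1,1]_\tau)\bigr)$, which is precisely the class required by the converse direction of Lemma~\ref{LemmaCNResidual} (applied with $\cE_{\lb'}=\cE_0$ and $\cE_{\rb'}=\cE_1+(n-1)$). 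That lemma then produces $P\in\Psi_{0'}^{-\infty,(\cE_0,\N_0,\emptyset,\cE_1+(n-1))}(X,\Omegazero^{\frac12}X)$ whose reduced normal operator is the one built from $\hat p$ via~\eqref{EqCNGoodNOpResc}.

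It then remains only to evaluate $I(P,y_0)$ from~\eqref{EqCNbNormal}: setting $\eta=0$ in $\hat p$ and cancelling the $(\frac{1-\tau}{2})^{\pm(n-1)}$ factors leaves precisely $a(\frac{t-t'}{t+t'})\bigl|\frac{\dd t}{t}\frac{\dd t'}{t'}\bigr|^{\frac12}$, as required. For the parametric statement, I would repeat the construction with $a(\tau)$ replaced by $a(y_0)(\tau)$ in the definition of $\hat p$; smoothness in $y_0$ is manifestly preserved throughout, so Lemma~\ref{LemmaCNResidual} delivers a single $P$ with $I(P,y)=a(y)$ for all $y$. I do not anticipate any genuine obstacle: the substantive work is already packaged into Lemma~\ref{LemmaCNResidual}, and the only point demanding any care is the index-set bookkeeping that records the shift of $\cE_1$ by $n-1$ arising from the smooth factor $(\frac{1-\tau}{2})^{n-1}$.
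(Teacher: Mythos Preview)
Your proposal is correct and follows essentially the same approach as the paper: both define $\hat p(y_0,\tau,\eta)=\chi(\eta)\bigl(\frac{1-\tau}{2}\bigr)^{n-1}a(\tau)$ for a cutoff $\chi$ equal to $1$ at the origin, invoke (the proof of) Lemma~\ref{LemmaCNResidual} with $\cE_{\lb'}=\cE_0$, $\cE_{\rb'}=\cE_1+(n-1)$ to produce $P$, and then read off $I(P,y_0)$ by setting $\eta=0$ in~\eqref{EqCNbNormal}. The only cosmetic difference is that the paper takes $\chi\in\CIc(\R^{n-1})$ identically $1$ near $0$, whereas you allow any Schwartz $\chi$ with $\chi(0)=1$; either choice works.
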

\begin{proof}
  Let $\chi\in\CIc(\R^{n-1})$ be identically $1$ near $0$. Set
  \[
    q(\tau,\eta) = \Bigl(\frac{1-\tau}{2}\Bigr)^{n-1}a(\tau)\chi(\eta),
  \]
  which lies in the space~\eqref{EqCNResidualSchwartz} for $\cE_{\lb'}=\cE_0$, $\cE_{\rb'}=\cE_1+(n-1)$. Thus, by (the proof of) Lemma~\ref{LemmaCNResidual}, if we set
  \[
    p(y_0,\tau,Y) := (2\pi)^{-(n-1)}\int_{\R^{n-1}} e^{i Y\cdot\eta}q(\tau,\eta)\,\dd\eta,
  \]
  then $p(y_0,\tau,Y)|\frac{\dd x}{x}\frac{\dd y}{x^{n-1}}\frac{\dd x'}{x'}\frac{\dd y'}{x'{}^{n-1}}|^{\frac12}$ is the restriction of the Schwartz kernel of an element $P\in\Psi_{0'}^{-\infty,(\cE_0,\N_0,\emptyset,\cE_1+(n-1))}(X,\Omegazero^{\frac12}X)$ to $\ff_{y_0}$. By construction, the reduced normal operator satisfies $N(P,y_0,0)(t,t')=(\frac{1-\tau}{2})^{-(n-1)}q(\tau,0)=a(\tau)$ where $\tau=\frac{t-t'}{t+t'}$.

  Since the given construction depends smoothly on the point $y_0$, the final statement of the Corollary follows.
\end{proof}

The next result shows the surjectivity of the reduced normal operator map onto fully residual operators:

\begin{prop}[0-ps.d.o.\ from fully residual reduced normal operators]
\label{PropCNConstr}
  Consider an operator family $N(\hat\eta)\in\CI(\Sph^{n-2};\Psi^{-\infty,(\cE_0,\cE_1)}([0,\infty],\Omegab^{\frac12}))$ with $\Re(\cE_0+\cE_1)>-(n-1)$. Let $y_0\in\R^{n-1}$ denote a point on $\pa X$. Then there exists $P\in\Psi_0^{-\infty,(\cE_0,\N_0,\cE_1+(n-1))}(X,\Omegazero^{\frac12}X)$ with $N(P,y_0,\hat\eta)=N(\hat\eta)$ for all $\hat\eta\in\Sph^{n-2}$. If $N$ in addition depends smoothly on a parameter $y\in\R^{n-1}$, so $N=N(y,\hat\eta)$, then one can find $P$ with $N(P,y,\hat\eta)=N(y,\hat\eta)$ for all $y,\hat\eta$.
  
  The analogous conclusions hold when $N(\hat\eta)$ takes values in $\Psi^{-\infty,(\alpha_0,\alpha_1)}([0,\infty],\Omegab^{\frac12})$ for $\alpha_0+\alpha_1>-(n-1)$, with $P\in\Psi_0^{-\infty,(\alpha_0,\alpha_1+(n-1))}(X,\Omegazero^{\frac12}X)$ then.
\end{prop}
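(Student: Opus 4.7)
The plan is to invert the chain of reductions from~\S\ref{SsCN}. Starting from the given $N(\hat\eta)$, I would first reconstruct the amplitude $\hat p(y_0,\tau,\eta)$ using the recovery formula~\eqref{EqCNRecover}; then take its inverse Fourier transform in $\eta$ to obtain $p(0,y_0,\tau,Y)$; extend this off the front face by multiplying with a cutoff $\chi(\rho)$, $\chi\in\CIc([0,\infty))$ identically $1$ near $0$; read off the Schwartz kernel of $P$ from the local expression~\eqref{Eq0GoodSK}; and globalize over $\pa X$ using a partition of unity.

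The key analytic step is to track the structure of $\hat p$, defined for $\eta\neq 0$ by
\[
  \hat p(y_0,\tau,\eta):=\Bigl(\tfrac{1-\tau}{2}\Bigr)^{n-1}\biggl(\Bigl|\tfrac{\dd t}{t}\tfrac{\dd t'}{t'}\Bigr|^{-\frac12}N\Bigl(\tfrac{\eta}{|\eta|}\Bigr)\biggr)\bigl(\tfrac{|\eta|}{2}(1+\tau),\tfrac{|\eta|}{2}(1-\tau)\bigr).
\]
Substituting $t=\tfrac{|\eta|}{2}(1+\tau)$ and $t'=\tfrac{|\eta|}{2}(1-\tau)$ into the joint polyhomogeneous structure of $N(\hat\eta)$---index sets $\cE_0$ at $t=0$, $\cE_1$ at $t'=0$, trivial (Schwartz) at infinity---should yield three features of $\hat p$: rapid decay as $|\eta|\to\infty$ uniformly in $\tau$; polyhomogeneity in $\tau$ at $\tau=-1$ with index set $\cE_0$ and at $\tau=+1$ with index set $\cE_1+(n-1)$, the shift coming from the explicit prefactor $((1-\tau)/2)^{n-1}$; and a polyhomogeneous conormal singularity at $\eta=0$ with index set $\cE_0+\cE_1$, realized as polyhomogeneity on the radial blow-up $[\R^{n-1}_\eta;\{0\}]$ with smooth dependence on $\hat\eta\in\Sph^{n-2}$. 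The hypothesis $\Re(\cE_0+\cE_1)>-(n-1)$ guarantees local integrability of this singularity.

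Setting $p(0,y_0,\tau,Y):=(2\pi)^{-(n-1)}\int_{\R^{n-1}}e^{iY\cdot\eta}\hat p(y_0,\tau,\eta)\,\dd\eta$, the Fourier transform converts rapid decay at $|\eta|\to\infty$ into smoothness in $Y$; preserves polyhomogeneity in $\tau$ unchanged (since it acts only in $Y$); and turns the polyhomogeneous singularity of $\hat p$ at $\eta=0$ of weight $z_0+z_1$ into polyhomogeneous asymptotics of $p$ at $|Y|\to\infty$ of weight $(n-1)+z_0+z_1$, producing index set $\cE_0+\cE_1+(n-1)$ at $\ff_\bop$. This matches exactly the index set at $\ff_\bop$ demanded by the inclusion~\eqref{EqCRel} for an element of $\Psi_0^{-\infty,(\cE_0,\N_0,\cE_1+(n-1))}(X,\Omegazero^{\frac12}X)$. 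Defining
\[
  K_P:=\chi(\rho)\,p(0,y',\tau,Y)\,\Bigl|\tfrac{\dd x}{x}\tfrac{\dd y}{x^{n-1}}\tfrac{\dd x'}{x'}\tfrac{\dd y'}{x'{}^{n-1}}\Bigr|^{\frac12}
\]
locally near $\ff$ and patching with a partition of unity then produces $P$, with smoothness in $\rho$ contributing the index set $\N_0$ at $\ff$. A short computation using~\eqref{EqCNGoodNOpResc} verifies $N(P,y_0,\hat\eta)=N(\hat\eta)$, since the prefactor $((1-\tau)/2)^{n-1}=(t'/(t+t'))^{n-1}$ cancels against the $(t'/(t+t'))^{-(n-1)}$ in~\eqref{EqCNGoodNOpResc}.

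Smooth parametric dependence on $y\in\R^{n-1}$ comes for free because every step (substitution, Fourier transform, cutoff, partition of unity) respects smooth dependence on the parameter. The conormal version of the proposition goes through by the same argument with polyhomogeneity replaced by conormal estimates throughout, using the analogous fact that the Fourier transform on $\R^{n-1}$ converts a conormal singularity of weight $\alpha_0+\alpha_1$ at the origin into conormal decay of weight $(n-1)+\alpha_0+\alpha_1$ at infinity. The main obstacle is precisely the careful bookkeeping of this $(n-1)$ shift through the Fourier transform, which is why the hypothesis $\Re(\cE_0+\cE_1)>-(n-1)$ appears.
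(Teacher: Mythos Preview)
Your overall strategy---recover $\hat p$ via~\eqref{EqCNRecover}, take the inverse Fourier transform in $\eta$, extend off $\ff$---coincides with the paper's. The genuine gap is at the corner $\lb\cap\rb\subset\ff$. You analyze the behavior of $p$ at $\tau=\pm 1$ and at $|Y|\to\infty$ as three separate statements, obtain the index sets $\cE_0$, $\cE_1+(n-1)$, $\cE_0+\cE_1+(n-1)$ at $\lb'$, $\rb'$, $\ff_\bop$ on $X^2_{0'}$, and then invoke~\eqref{EqCRel}. But~\eqref{EqCRel} is only a one-way inclusion $\Psi_0\hookrightarrow\Psi_{0'}$: matching the index set at $\ff_\bop$ is necessary, not sufficient, for a kernel polyhomogeneous on $\ff'$ to descend to one on $\ff$. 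Concretely, near the corner one has coordinates $\rho_\lb=(1+\tau)/|Y|$, $\rho_\rb=(1-\tau)/|Y|$, and a monomial $(1+\tau)^a(1-\tau)^b|Y|^{-c}$ with $(a,0)\in\cE_0$, $(b,0)\in\cE_1+(n-1)$, $(c,0)\in\cE_0+\cE_1+(n-1)$ equals a constant times $\rho_\lb^a\rho_\rb^b(\rho_\lb+\rho_\rb)^{c-a-b}$; this is jointly polyhomogeneous in $(\rho_\lb,\rho_\rb)$ only when $c-a-b\in\N_0$. Your argument gives no reason why only such matched terms appear after the Fourier transform.

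The reason they do is a special feature of this particular $\hat p$: it arises by substituting $(t,t')=\tfrac{|\eta|}{2}(1+\tau,1-\tau)$ into a kernel $n(\hat\eta,t,t')$ that is \emph{jointly} polyhomogeneous in $(t,t')$, so the expansions at $\tau=\pm 1$ and at $|\eta|=0$ are correlated term by term. The paper makes this manifest through the change of variables $\eta=(\rho_\lb+\rho_\rb)\zeta$ in the inverse Fourier integral, yielding the closed expression
\[
  p_0(\rho_\lb,\rho_\rb,\hat Y)=(2\pi)^{-(n-1)}\int_{\R^{n-1}} e^{2 i\hat Y\cdot\zeta}\,\rho_\rb^{n-1}\,n\Bigl(\frac{\zeta}{|\zeta|},\rho_\lb|\zeta|,\rho_\rb|\zeta|\Bigr)\,\dd\zeta,
\]
from which the joint conormal (and then polyhomogeneous, via the characterization~\eqref{EqCNConstrPhg}) regularity in $(\rho_\lb,\rho_\rb)$ is read off directly from that of $n$ in $(t,t')$. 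This change of variables is the substantive step of the proof and is absent from your outline.
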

\begin{proof}
  Write the Schwartz kernel of $N(\hat\eta)$ as
  \[
    n(\hat\eta;t,t')\Bigl|\frac{\dd t}{t}\frac{\dd t'}{t'}\Bigr|^{\frac12}.
  \]
  Let $\alpha_i<\Re\cE_i$ for $j=0,1$ be such that $\alpha_0+\alpha_1>-(n-1)$. We then have estimates
  \begin{equation}
  \label{EqCNConstrCon}
    |\pa_{\hat\eta}^\alpha(t\pa_t)^j(t'\pa_{t'})^k n| \lesssim t^{\alpha_0}t'{}^{\alpha_1} (1+t+t')^{-N}
  \end{equation}
  for all $\alpha,j,k,N$. The polyhomogeneity of $n$ is equivalent to the statement that for all $C\in\R$,
  \begin{equation}
  \label{EqCNConstrPhg}
    \biggl|\pa_{\hat\eta}^\alpha\biggl(\prod_{\genfrac{}{}{0pt}{}{(z,k)\in\cE_0}{\Re z\leq C}} (t\pa_t-z)\biggr) (t'\pa_{t'})^k n\biggr| \lesssim  t^C t'{}^{\alpha_1} (1+t+t')^{-N},
  \end{equation}
  together with analogous estimates capturing the expansion at $t'=0$; see \cite[Proposition~4.14.2]{MelroseDiffOnMwc}.

  By analogy with~\eqref{EqCNRecover}, define now for $\tau\in(-1,1)$ and $\eta\in\R^{n-1}\setminus\{0\}$
  \begin{equation}
  \label{EqCNConstrqDef}
    q(\tau,\eta) := \Bigl(\frac{1-\tau}{2}\Bigr)^{n-1} n\Bigl(\frac{\eta}{|\eta|},\half|\eta|(1+\tau),\half|\eta|(1-\tau)\Bigr).
  \end{equation}
  The estimates~\eqref{EqCNConstrCon} imply
  \begin{equation}
  \label{EqCNConstrq}
    \bigl|\bigl((1-\tau^2)\pa_\tau\bigr)^j \eta^\alpha\pa_\eta^\beta q(\tau,\eta)\bigr|\lesssim(1+\tau)^{\alpha_0}(1-\tau)^{\alpha_1+n-1}|\eta|^{\alpha_0+\alpha_1}(1+|\eta|)^{-N}
  \end{equation}
  for all $j$ and $\alpha,\beta\in\N_0^{n-1}$ with $|\beta|\leq|\alpha|$. Thus, the function $q$ can be extended uniquely across $\eta=0$ as an element of $L^1_\loc((-1,1)_\tau\times\R^{n-1}_\eta)$ (which is conormal at $\eta=0$), with rapid decay as $|\eta|\to\infty$ for any fixed $\tau\in(-1,1)$. As such, we can take its inverse Fourier transform in $\eta$,
  \begin{equation}
  \label{EqCNConstrpDef}
    p(\tau,Y) := (2\pi)^{-(n-1)} \int_{\R^{n-1}} e^{i Y\cdot\eta}q(\tau,\eta)\,\dd\eta.
  \end{equation}
  The bound~\eqref{EqCNConstrq} implies $|p(\tau,Y)| \lesssim(1+\tau)^{\alpha_0}(1-\tau)^{\alpha_1+n-1}$, and similarly for derivatives along any power of $(1-\tau^2)\pa_\tau$ and along $Y^\beta\pa_Y^\alpha$ for $|\beta|\leq|\alpha|$. We can prove a better bound for $|Y|\geq 1$ by exploiting the conormal regularity of $q$ at $\eta$: we split the domain of integration and prepare for an integration by parts argument (for improved decay in $Y$) by writing
  \[
    |p(\tau,Y)| \lesssim \int_{|\eta|<|Y|^{-1}} |q(\tau,\eta)|\,\dd\eta + \biggl|\int_{|\eta|>|Y|^{-1}} \Bigl(\bigl( |Y|^{-2}Y\cdot D_\eta\bigr)^M e^{i Y\cdot\eta}\Bigr) q(\tau,\eta)\,\dd\eta \biggr|
  \]
  for some $M$ chosen below. Write $w:=(1+\tau)^{\alpha_0}(1-\tau)^{\alpha_1+n-1}$, then the first integral is bounded by
  \[
    w\int_0^{|Y|^{-1}} r^{\alpha_0+\alpha_1}r^{n-2}\,\dd r\lesssim w|Y|^{-(n-1)-\alpha_0-\alpha_1}.
  \]
  In the second integral, we can integrate by parts $M$ times; each boundary term at $|\eta|=|Y|^{-1}$ can be estimated by $w|Y|^{-k-1}\int_{|\eta|=|Y|^{-1}}|\eta|^{\alpha_0+\alpha_1-k}\,\dd\eta\sim w|Y|^{-\alpha_0-\alpha_1-(n-1)}$ for some $k=0,\ldots,M-1$, and the final bulk integral is bounded from above by
  \begin{align*}
    w|Y|^{-M}\int_{|\eta|>|Y|^{-1}} |\eta|^{\alpha_0+\alpha_1-M}(1+|\eta|)^{-N}\,\dd\eta &\lesssim w|Y|^{-M}\int_{|Y|^{-1}}^\infty r^{\alpha_0+\alpha_1-M+n-2}\,\dd r \\
      &\lesssim w|Y|^{-\alpha_0-\alpha_1-n+1}
  \end{align*}
  as well, provided we choose $M>\alpha_0+\alpha_1+n-1$. Altogether, we have thus proved
  \begin{equation}
  \label{EqCNConstrEst}
    |p(\tau,Y)| \lesssim \Bigl(\frac{1+\tau}{\la Y\ra}\Bigr)^{\alpha_0}\Bigl(\frac{1-\tau}{\la Y\ra}\Bigr)^{\alpha_1+n-1},
  \end{equation}
  likewise for derivatives along $(1-\tau^2)\pa_\tau$ and $Y^\beta\pa_Y^\alpha$, $|\beta|\leq|\alpha|$.

  This gives the desired conormality of $p$ at $\lb$ and $\rb$ away from $\lb\cap\rb$. To prove that indeed $p\in\cA^{(\alpha_0,\alpha_1+n-1)}(\ff)$, $\ff\subset X^2_0$, it remains to analyze the behavior of $p$ near the corner $\lb\cap\rb\subset X^2_0$, where we pass to the coordinates
  \[
    \rho_\lb = \frac{1+\tau}{|Y|},\qquad
    \rho_\rb = \frac{1-\tau}{|Y|},\qquad
    \hat Y = \frac{Y}{|Y|};
  \]
  these are related to the coordinates $\tau$, $\rho_Y:=|Y|^{-1}$, $\hat Y$ near $\ff'\cap\ff_\bop\subset X^2_{0'}$ by $\rho_Y=\frac{\rho_\lb+\rho_\rb}{2}$ and $\tau=\frac{\rho_\lb-\rho_\rb}{\rho_\lb+\rho_\rb}$. While~\eqref{EqCNConstrEst} thus already gives the desired $L^\infty$-bound for $p$, it is more transparent to write
  \[
    p_0(\rho_\lb,\rho_\rb,\hat Y)=p\Bigl(\frac{\rho_\lb-\rho_\rb}{\rho_\lb+\rho_\rb},\frac{2\hat Y}{\rho_\lb+\rho_\rb}\Bigr)
  \]
  for the function $p$ in the new coordinates; using~\eqref{EqCNConstrqDef} and \eqref{EqCNConstrpDef} and the change of variables $\eta=(\rho_\lb+\rho_\rb)\zeta$, one finds
  \begin{equation}
  \label{EqCNConstrp0}
    p_0(\rho_\lb,\rho_\rb,\hat Y) = (2\pi)^{-(n-1)}\int_{\R^{n-1}} e^{2 i\hat Y\cdot\zeta} \rho_\rb^{n-1} n\Bigl(\frac{\zeta}{|\zeta|},\rho_\lb|\zeta|,\rho_\rb|\zeta|\Bigr)\,\dd\zeta,
  \end{equation}
  with the pointwise bound
  \begin{equation}
  \label{EqCNConstrp0Bd}
    |p_0|\lesssim\rho_\lb^{\alpha_0}\rho_\rb^{\alpha_1+n-1}
  \end{equation}
  from~\eqref{EqCNConstrEst}. Directly differentiating the integral expression~\eqref{EqCNConstrp0} shows that the derivative of $p_0$ along any power of $\rho_\lb\pa_{\rho_\lb}$, $\rho_\rb\pa_{\rho_\rb}$, and $\pa_{\hat Y}$ obeys the same pointwise bound in view of~\eqref{EqCNConstrCon}. This proves $p\in\cA^{(\alpha_0,\alpha_1+n-1)}(\ff)$.
  
  For the polyhomogeneous version, note that for any $C$, the derivative
  \[
    p_{0,C}(\rho_\lb,\rho_\rb,\hat Y) := \Biggl(\prod_{\genfrac{}{}{0pt}{}{(z,k)\in\cE_0}{\Re z\leq C}}(\rho_\lb\pa_{\rho_\lb}-z)\Biggr)p_0(\rho_\lb,\rho_\rb,\hat Y)
  \]
  obeys the bound~\eqref{EqCNConstrp0Bd} with $\alpha_0$ replaced by $C$, and so do all derivatives of $p_{0,C}$ along any power of $\rho_\lb\pa_{\rho_\lb}$, $\rho_\rb\pa_{\rho_\rb}$, $\pa_{\hat Y}$; indeed, this follows from~\eqref{EqCNConstrPhg}. This, together with an analogous argument at $\rb$, proves $p\in\cA_\phg^{(\cE_0,\cE_1+n-1)}(\ff)$ and thus finishes the proof.
\end{proof}

\section{Elliptic parametrix construction}
\label{SP}

In this section, we shall prove Theorem~\ref{ThmI} as well as a weaker result (Theorem~\ref{ThmB}) when the boundary spectrum is not constant. Thus, $P\in\Psi_0^m(X,\Omegazero^{\frac12}X)$ is fully elliptic at the weight $\alpha\in\R$ in the sense of Definition~\ref{DefIInv}.

\subsection{Constant boundary spectrum}
\label{SsPC}

In this section, we assume that the boundary spectrum $\Specb(P,y)$ of $P$ is independent of the boundary point $y$; we denote it by $\Specb(P)\subset\C\times\N_0$.

At first, we focus on the construction of a right parametrix. The usual symbolic parametrix construction produces $Q_0\in\Psi_0^{-m}(X,\Omegazero^{\frac12}X)$ with the property that
\[
  P Q_0 = I - R_0,\qquad R_0\in\Psi_0^{-\infty}(X,\Omegazero^{\frac12}X).
\]
Passing to reduced normal operators in local coordinates $x\geq 0$, $y\in\R^{n-1}$, $\hat\eta\in\Sph^{n-2}$ on the 0-cosphere bundle, this implies
\[
  \hat N(P,y,\hat\eta)\hat N(Q_0,y,\hat\eta) = I - \hat N(R_0,y,\hat\eta).
\]
We wish to find a residual operator $Q_1$ in the large 0-calculus so that (recalling Proposition~\ref{PropCNSmall})
\begin{equation}
\label{EqPQ1}
  \hat N(P,y,\hat\eta) \hat N(Q_1,y,\hat\eta) = \hat N(R_0,y,\hat\eta) \in \Psi_{\bop,\scop}^{-\infty,(\emptyset,\N_0,\emptyset)}([0,\infty],\Omegab^{\frac12}).
\end{equation}

For the analysis of the reduced normal operator, we need to work with weighted b-scattering Sobolev spaces. Thus, let
\begin{equation}
\label{EqPHbsc}
  H_{\bop,\scop}^{0,(\alpha,r)}([0,\infty],\Omegab^{\frac12}) := \Bigl(\frac{t}{t+1}\Bigr)^\alpha(1+t)^{-r}L^2([0,\infty],\Omegab^{\frac12}[0,\infty]),
\end{equation}
and define $H_{\bop,\scop}^{s,(\alpha,r)}([0,\infty],\Omegab^{\frac12})$ for $s\geq 0$ to consist of those elements of the space~\eqref{EqPHbsc} which remain in this space upon application of any elliptic $s$-th order b-scattering ps.d.o.\ in $\Psi_{\bop,\scop}^{s,(0,0)}([0,\infty],\Omegab^{\frac12})$; for $s<0$, define $H_{\bop,\scop}^{s,(\alpha,r)}([0,\infty],\Omegab^{\frac12})$ as the space of distributions of the form $u_1+A u_2$ where $u_1,u_2\in H_{\bop,\scop}^{0,(\alpha,r)}([0,\infty],\Omegab^{\frac12})$ and $A\in\Psi_{\bop,\scop}^{|s|,(0,0)}([0,\infty],\Omegab^{\frac12})$ is elliptic.

We shall use the notation of index sets $\cE_\pm$, $\wh{\cE_\pm}(j)$, $\wh{\cE_\pm}$, $\wh{\cE_\pm^\flat}$, $\wh{\cE_\pm^\sharp}$, $\wh{\cE_\ff^\pm}$ from Theorem~\ref{ThmI}.

\begin{prop}[Inverse of the normal operator]
\label{PropPNInv}
  The map
  \begin{equation}
  \label{EqPNInv}
    \hat N(P,y,\hat\eta) \colon H_{\bop,\scop}^{s,(\alpha,r)}([0,\infty],\Omegab^{\frac12}) \to H_{\bop,\scop}^{s-m,(\alpha,r-m)}([0,\infty],\Omegab^{\frac12})
  \end{equation}
  is invertible for any $s,r\in\R$. There exists
  \[
    P^-\in\Psi_0^{-m}(X,\Omegazero^{\frac12}X)+\Psi_0^{-\infty,(\wh{\cE_+},\N_0,\wh{\cE_-}+(n-1))}(X,\Omegazero^{\frac12}X)
  \]
  so that $\hat N(P^-,y,\hat\eta) = \hat N(P,y,\hat\eta)^{-1}$ (the inverse of~\eqref{EqPNInv}) for all $y,\hat\eta$.
\end{prop}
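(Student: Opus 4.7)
The strategy is to combine a full b-scattering-calculus parametrix construction for $\hat N(P,y,\hat\eta)$ with a surjectivity argument (Proposition~\ref{PropCNConstr}) to lift the resulting inverse back into the range of the reduced normal operator map. By Proposition~\ref{PropCNSmall}, $\hat N(P,y,\hat\eta)\in\Psi_{\bop,\scop}^{m,(0,m)}([0,\infty],\Omegab^{\frac12})$ is symbolically elliptic (and fully elliptic in $T=t^{-1}<1$ as a scattering operator), with smooth dependence on $(y,\hat\eta)$. The plan is to first produce a Fredholm two-sided b-scattering parametrix at the weight $\alpha$, then upgrade to an exact inverse via conditions (2)–(3) of Definition~\ref{DefIInv}, then identify the precise polyhomogeneous structure of the inverse, and finally lift back to the 0-calculus.

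The parametrix construction has three layers. First, symbolic ellipticity yields $B_0\in\Psi_{\bop,\scop}^{-m,(0,-m)}$ with $\hat N\cdot B_0=I-R_0$, where $R_0$ is smooth on the interior, conormal down to $\ff_{\bop,0}$, and rapidly vanishing at $\ff_\scop\cup\ff_{\bop,\infty}$. Second, I invert the indicial family: by Lemma~\ref{LemmaCNbsc2Ind}, $I(P,y,\sigma)^{-1}$ is meromorphic in $\sigma\in\C$ with poles at $\sigma=-i z$ for $(z,k)\in\Specb(P)$ and obeys the symbolic bound $|I(P,y,\sigma)^{-1}|\lesssim|\sigma|^{-m}$ in horizontal strips. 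Condition~(1) of Definition~\ref{DefIInv} says $\alpha\notin\Re\Specb(P)$, so the contour $\Im\sigma=-\alpha$ avoids all poles. Starting from the Mellin-resolvent on this contour and shifting it to the left, resp.\ right, picks up residues corresponding precisely to the index sets $\cE_+$ at $\lb_\bop$, resp.\ $\cE_-$ at $\rb_\bop$; an asymptotic summation across $\ff_{\bop,0}$ improves the error to fully residual at $\ff_{\bop,0}$. Third, an asymptotic Neumann series in the b-scattering calculus (using Proposition~\ref{PropCNbsc}, which shows how extended unions and integer shifts accumulate under composition) produces right and left parametrices $B,B'$ with errors $R_R,R_L$ that are fully residual and polyhomogeneous, with index sets controlled by $\wh{\cE_\pm^\flat}$ and strictly positive real parts (so $R_R, R_L$ are compact on the weighted spaces in question).

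Fredholmness of $\hat N(P,y,\hat\eta)$ on $H_{\bop,\scop}^{s,(\alpha,r)}$ follows, independently of $(s,r)$ by elliptic regularity. Conditions~(2) and~(3) of Definition~\ref{DefIInv} then force the kernel and cokernel to be trivial—condition~(3) is phrased so as to cover all adjoints arising from reasonable $L^2$-pairings on polynomially weighted b-volume densities, which by standard b-scattering duality is what one needs to kill the cokernel on every $H_{\bop,\scop}^{s-m,(\alpha,r-m)}$. Hence $\hat N(P,y,\hat\eta)$ is invertible, and
\[
  \hat N(P,y,\hat\eta)^{-1} = B + B R_R\bigl(I-R_R\bigr)^{-1},
\]
where $(I-R_R)^{-1}=I+S$ for $S$ in the fully residual b-scattering class with index set $\wh{\cE_-^\flat}$ at $\rb_\bop$ and $\wh{\cE_+^\flat}$ at $\lb_\bop$ by a fully-residual Neumann series (convergent modulo arbitrarily fast vanishing). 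Composition via Proposition~\ref{PropCNbsc}, combined with the bookkeeping encoded in the definitions of $\wh{\cE_\pm}$, yields
\[
  \hat N(P,y,\hat\eta)^{-1} \in \Psi_{\bop,\scop}^{-m,(0,-m)} + \Psi_{\bop,\scop}^{-\infty,(\wh{\cE_+},\N_0,\wh{\cE_-})},
\]
with smooth dependence on $(y,\hat\eta)$ preserved throughout.

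To conclude, let $Q_0\in\Psi_0^{-m}(X,\Omegazero^{\frac12}X)$ be any symbolic parametrix of $P$; then $\hat N(P,y,\hat\eta)^{-1}-\hat N(Q_0,y,\hat\eta)$ is a smooth family taking values in the fully residual b-scattering class $\Psi^{-\infty,(\wh{\cE_+},\wh{\cE_-})}([0,\infty],\Omegab^{\frac12})$. The hypothesis $\Re\wh{\cE_+}>\alpha$ and $\Re\wh{\cE_-}>-\alpha$ (inherited from $\cE_\pm$) yields $\Re(\wh{\cE_+}+\wh{\cE_-})>0>-(n-1)$, so Proposition~\ref{PropCNConstr} applies and produces a correction $P^-_{\mathrm{res}}\in\Psi_0^{-\infty,(\wh{\cE_+},\N_0,\wh{\cE_-}+(n-1))}(X,\Omegazero^{\frac12}X)$ realizing this difference as a reduced normal operator; then $P^-:=Q_0+P^-_{\mathrm{res}}$ has the asserted form. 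The main obstacle is the middle step: ensuring that the (pointwise) inverse of $\hat N(P,y,\hat\eta)$ has exactly the polyhomogeneous structure dictated by $\wh{\cE_\pm}$, which requires careful bookkeeping of the index sets generated by the indicial Mellin inversion and by each composition in the Neumann series; this is where the somewhat elaborate definitions of $\wh{\cE_\pm}$, $\wh{\cE_\pm^\flat}$ in Theorem~\ref{ThmI} earn their keep.
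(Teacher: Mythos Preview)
Your argument has a genuine gap in the final lifting step. You assert that $\hat N(P,y,\hat\eta)^{-1}-\hat N(Q_0,y,\hat\eta)$ lies in the \emph{fully residual} class $\Psi^{-\infty,(\wh{\cE_+},\wh{\cE_-})}([0,\infty],\Omegab^{\frac12})$, i.e.\ is polyhomogeneous on the unresolved quarter-space $\ol{[0,\infty)^2}$. This is false in general: the difference equals $\hat N(P,y,\hat\eta)^{-1}\hat N(R_0,y,\hat\eta)$, and by your own computation (or Proposition~\ref{PropCNbsc}) this lies only in $\Psi_{\bop,\scop}^{-\infty,(\wh{\cE_+},\N_0,\wh{\cE_-})}$, which has a nontrivial expansion at the b-front face $\ff_{\bop,0}$. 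Smoothness at $\ff_{\bop,0}$ does \emph{not} imply that the kernel blows down to a polyhomogeneous distribution on $\ol{[0,\infty)^2}$; that would require a very specific matching of the Taylor coefficients at $\ff_{\bop,0}$ with the expansions at $\lb_\bop,\rb_\bop$. Consequently Proposition~\ref{PropCNConstr} does not apply, and you have no mechanism to realize this difference as the reduced normal operator of a 0-pseudodifferential operator.

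This is precisely the obstacle flagged in~\S\ref{SsCN}: the range of the reduced normal operator map admits no simple characterization for operators with nontrivial behavior at $\ff_{\bop,0}$. The paper's proof addresses this by never leaving that range. Each step of the b-parametrix construction---the indicial inverse (via Corollary~\ref{CorCN0fromInd}), the removal of the error at $\lb_\bop$, and the Neumann series at $\ff_{\bop,0}$---is performed not abstractly in the b-scattering calculus but rather by manipulating the partially Fourier-transformed restricted Schwartz kernels $\hat p(y,\tau,\eta)$ and verifying at each stage, via Lemma~\ref{LemmaCNResidual}, that the result extends smoothly across $\eta=0$ and hence remains the reduced normal operator of an honest (extended) 0-operator. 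Only the final correction term $\hat N(\tilde R',y,\hat\eta)\hat N(P,y,\hat\eta)^{-1}\hat N(R',y,\hat\eta)$ is genuinely fully residual, and it is only there that Proposition~\ref{PropCNConstr} is invoked. Your approach of constructing the inverse first and lifting at the end cannot work without a substantially stronger surjectivity statement than Proposition~\ref{PropCNConstr} provides.
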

\begin{proof}
  We shall construct the difference $P^--Q_0$ by solving~\eqref{EqPQ1}; the choice of the weight $\alpha$ will inform the asymptotics at $\lb$ and $\rb$. Recall that by Proposition~\ref{PropCNSmall}, $\hat N(P,y,\hat\eta)\in\Psi_{\bop,\scop}^{m,(0,m)}([0,\infty],\Omegab^{\frac12})$ is an elliptic operator with $\hat\eta$-independent indicial operator $I(P,y)$.

  \pfstep{(1) Inverting the b-normal operator.} Passing to indicial operators in~\eqref{EqPQ1}, we first solve
  \begin{equation}
  \label{EqPInd}
    I(P,y)I(Q_{1 1},y) = I(R_0,y).
  \end{equation}
  Passing to indicial families and using the full ellipticity at the weight $\alpha$ as well as the lower bound~\eqref{EqCNbsc2Ind}, we may thus set
  \[
    q_{1 1}(y,s) = (2\pi)^{-1} \int_{\Im\sigma=-\alpha} s^{i\sigma} I(P,y,\sigma)^{-1}I(R_0,y,\sigma)\,\dd\sigma.
  \]
  Shifting the contour of integration and using the residue theorem, one finds that $q_{1 1}\in\CI(\R^{n-1}_y,\cA_\phg^{(\cE_+,\cE_-)}([0,\infty]_s))$ where $\cE_+,\cE_-\subset\C\times\N_0$ are defined around~\eqref{EqIEpm} (see also Corollary~\ref{CorCNInd}). Using $\tau=\frac{s-1}{s+1}$ and Corollary~\ref{CorCN0fromInd}, we conclude that there exists
  \begin{equation}
  \label{EqPIndQ11}
    Q_{1 1} \in \Psi_{0'}^{-\infty,(\cE_+,\N_0,\emptyset,\cE_-+(n-1))}(X,\Omegazero^{\frac12}X),\qquad I(Q_{1 1})=q_{1 1}.
  \end{equation}
  Proposition~\ref{PropCC0x} implies that
  \[
    R_{1 1} := I - P(Q_0+Q_{1 1}) = R_0 - P Q_{1 1} \in \Psi_{0'}^{-\infty,(\cE_+,\N_0,\emptyset,\cE_-+(n-1))}(X,\Omegazero^{\frac12}X).
  \]
  Then Proposition~\ref{PropCNbsc} and equation~\eqref{EqPInd} give
  \[
    \hat N(R_{1 1},y,\hat\eta) = \hat N(R_0,y,\hat\eta) - \hat N(P,y,\hat\eta)\hat N(Q_{1 1},y,\hat\eta) \in\Psi_{\bop,\scop}^{-\infty,(\cE_+,\N_0+1,\cE_-)}([0,\infty],\Omegab^{\frac12}).
  \]
  It is important to keep track of the stronger information provided by Lemma~\ref{LemmaCNResidual}, namely that $\wh{r_{1 1}}(y,\tau,\eta)$ (defined via a partial Fourier transform of the de-densitized Schwartz kernel of $R_{1 1}$ restricted to $\ff'_y$, or directly in terms of $\hat N(R_{1 1},y,\hat\eta)$ by the formula~\eqref{EqCNRecover}) is an element of $\sS(\R^{n-1}_\eta;\cA_\phg^{(\cE_+,\cE_-+(n-1))}([-1,1]_\tau))$ and moreover, by construction, vanishes at $\eta=0$.

  \pfstep{(2) Solving away the error at $\lb_\bop$.} Define the index sets
  \begin{equation}
  \label{EqPTildeSets}
    \wt{\cE_\pm}(0) := \cE_\pm\extcup\,\cE_\pm,\qquad
    \wt{\cE_\pm}(j+1) := \cE_\pm\extcup\,\bigl(\wt{\cE_\pm}(j)+1\bigr),\qquad
    \wt{\cE_\pm} := \bigcup_{j=0}^\infty \wt{\cE_\pm}(j).
  \end{equation}
  We claim that there exists
  \begin{equation}
  \label{EqPQ12}
    Q_{1 2}\in\Psi_{0'}^{-\infty,(\wt{\cE_+},\N_0,\emptyset,\emptyset)}(X,\Omegazero^{\frac12}X) = \Psi_0^{-\infty,(\wt{\cE_+},\N_0,\emptyset)}(X,\Omegazero^{\frac12}X)
  \end{equation}
  so that the reduced normal operator of the remaining error
  \begin{equation}
  \label{EqPR2}
    R_{1 2} := I-P(Q_0+Q_{1 1}+Q_{1 2}) = R_{1 1} - P Q_{1 2} \in \Psi_{0'}^{-\infty,(\wt{\cE_+},\N_0,\emptyset,\cE_-+(n-1))}(X,\Omegazero^{\frac12}X)
  \end{equation}
  has trivial index set at $\lb_\bop$, that is,
  \begin{equation}
  \label{EqPR2Norm}
    \hat N(R_{1 2},y,\hat\eta) = \hat N(R_{1 1},y,\hat\eta) - \hat N(P,y,\hat\eta)\hat N(Q_{1 2},y,\hat\eta) \in \Psi_{\bop,\scop}^{-\infty,(\emptyset,\N_0+1,\cE_-)}([0,\infty],\Omegab^{\frac12}).
  \end{equation}
  
  In order to accomplish this, we first make a general observation. If $\cF_{\lb_\bop},\cF_{\ff_{\bop,0}}\subset\C\times\N_0$ are any index sets and $B\in\Psi_{\bop,\scop}^{-\infty,(\cF_{\lb_\bop},\cF_{\ff_{\bop,0}},\emptyset)}([0,\infty],\Omegab^{\frac12})$, then $\hat N(P,y,\hat\eta)\circ B$ is to leading order at $\lb_\bop$ given by the action of $I(P,y)$, lifted to the left factor, on the polyhomogeneous expansion of $B$ at $\lb_\bop$. More precisely, consider local coordinates
  \begin{equation}
  \label{EqPlbCoord}
    s = \frac{t}{t'} \in [0,1),\qquad
    t'\in[0,\infty]
  \end{equation}
  near $\lb_\bop\subset[0,\infty]_{\bop,\scop}^2$. Then $|\frac{\dd t}{t}\frac{\dd t'}{t'}|^{\frac12}=|\frac{\dd s}{s}\frac{\dd t'}{t'}|^{\frac12}$. We define $I_{\lb_\bop}(P,y)$ to have Schwartz kernel given by the same formula as $I(P,y)$ in~\eqref{EqCNbNormal} but in the $s$-coordinates, so
  \[
    I_{\lb_\bop}(P,y) = \wh{K_P^0}\Bigl(\frac{s}{s'},0,0,y\Bigr)\,\Bigl|\frac{\dd s}{s}\frac{\dd s'}{s'}\Bigr|^{\frac12}.
  \]
  This acts fiberwise (on each fiber of $[0,1)\times[0,\infty]\to[0,\infty]$) on extendible distributions with compact support on $[0,1)_s\times[0,\infty]_{t'}$. With $\chi\in\CIc([0,1)\times[0,\infty])\subset\CI([0,\infty]_{\bop,\scop}^2)$ denoting a cutoff, identically $1$ near $\{0\}\times[0,\infty]$, we then have
  \begin{equation}
  \label{EqPlbNormOp}
    \chi\bigl( \hat N(P,y,\hat\eta)\circ B - I_{\lb_\bop}(P,y)\circ(\chi B)\bigr) \in \Psi_{\bop,\scop}^{-\infty,(\cF_{\lb_\bop}+1,\cF_{\ff_{\bop,0}},\emptyset)}([0,\infty],\Omegab^{\frac12}).
  \end{equation}
  This can easily be proved by direct inspection of the integral kernel of $\hat N(P,y,\hat\eta)\circ B$.
  
  Returning to the task at hand, the first step is to find an operator family $B_0(y,\hat\eta)\in\Psi_{\bop,\scop}^{-\infty,(\wh\cE_+(0),\N_0+1,\emptyset)}([0,\infty],\Omegab^{\frac12})$, with Schwartz kernel $B_0(y,\hat\eta)(s,t')|\frac{\dd s}{s}\frac{\dd t'}{t'}|^{\frac12}$ vanishing for $s\geq\half$ (using the coordinates~\eqref{EqPlbCoord}), so that
  \begin{equation}
  \label{EqPlb1}
    \hat N(R_{1 1},y,\hat\eta) - \hat N(P,y,\hat\eta)\circ B_0(y,\hat\eta) \in \Psi_{\bop,\scop}^{-\infty,(\wt{\cE_+}(0)+1,\N_0+1,\cE_-)}([0,\infty],\Omegab^{\frac12})
  \end{equation}
  has index set at $\lb_\bop$ improved by $1$. In view of~\eqref{EqPlbNormOp}, this holds provided
  \[
    \chi\bigl(\hat N(R_{1 1},y,\hat\eta)(s t',t') - I_{\lb_\bop}(P,y)\circ B_0(y,\hat\eta)(s,t')\bigr)  \in \Psi_{\bop,\scop}^{-\infty,(\wt{\cE_+}(0)+1,\N_0+1,\emptyset)}([0,\infty],\Omegab^{\frac12}).
  \]
  (Here the arguments of $\hat N$ are $t=s t'$ and $t'$.) We can explicitly construct such an operator $B_0$ using the (inverse) Mellin transform: with $\chi_0\in\CIc([0,\half))$ identically $1$ on $[0,\tfrac14]$, we define
  \[
    n(R_{1 1},y,\hat\eta)(\sigma,t') := \int_0^\infty s^{-i\sigma} \chi_0(s)\biggl(\hat N(R_{1 1},y,\hat\eta)(s t',t')\Bigl|\frac{\dd s}{s}\frac{\dd t'}{t'}\Bigr|^{-\frac12}\biggr)\,\frac{\dd s}{s}
  \]
  and then set
  \[
    B_0(y,\hat\eta)(s,t') := (2\pi)^{-1}\int_{\Im\sigma=-\alpha} s^{i\sigma} \chi_0(s)I(P,y,\sigma)^{-1} n(R_{1 1},y,\hat\eta)(\sigma,t')\,\dd\sigma\,\Bigl|\frac{\dd s}{s}\frac{\dd t'}{t'}\Bigr|^{\frac12}.
  \]
  Note then that $n(R_{1 1},y,\hat\eta)(\sigma,t')$ is meromorphic in $\sigma$ with divisor $\cE_+$, and thus the integrand defining $B_0(y,\hat\eta)$ is meromorphic and its divisor in $\Im\sigma>-\alpha$ is contained in $\cE_+\extcup\,\cE_+=\wt{\cE_+}(0)$. For later use, we note that if we define
  \begin{equation}
  \label{EqPalpha0}
    \alpha_0:=\min_{(z,k)\in\cE_+}\Re z
  \end{equation}
  and let $\eps\in(0,1)$ be such that $\Re z\notin(\alpha_0-\eps,\alpha_0)$ for all $(z,k)\in\cE_0$, then we may equivalently integrate over $\Im\sigma=-(\alpha_0-\frac{\eps}{2})$.
  
  Having thus arranged~\eqref{EqPlb1}, we next show that the family $B_0(y,\hat\eta)$ is the reduced normal operator of an element of the large extended 0-calculus. With the partially Fourier transformed Schwartz kernels $\hat p$ and $\wh{r_{1 1}}$ of the normal operators of $P,R_{1 1}$ at hand, we need to study (cf.\ \eqref{EqCNRecover} relative to $B_0$, and note that $\frac{t}{t'}=\frac{1+\tau}{1-\tau}$ when $\tau=\frac{t-t'}{t+t'}$)
  \begin{align*}
    \wh{q_{1 2}^{(0)}}(y,\tau,\eta) &:= \Bigl(\frac{1-\tau}{2}\Bigr)^{n-1} \biggl( \Bigl|\frac{\dd t}{t}\frac{\dd t'}{t'}\Bigr|^{-\frac12} B_0\Bigl(y,\frac{\eta}{|\eta|}\Bigr)\biggr) \Bigl(\frac{1+\tau}{1-\tau},\half|\eta|(1-\tau)\Bigr) \\
      &= \Bigl(\frac{1-\tau}{2}\Bigr)^{n-1} (2\pi)^{-1} \int_{\Im\sigma=-\alpha} \Bigl(\frac{1+\tau}{1-\tau}\Bigr)^{i\sigma}\chi_0\Big(\frac{1+\tau}{1-\tau}\Bigr) \\
      &\hspace{11em} \times I(P,y,\sigma)^{-1} n\Bigl(R_{1 1},y,\frac{\eta}{|\eta|}\Bigr)\bigl(\sigma,\half|\eta|(1-\tau)\bigr)\,\dd\sigma \\
      &= \Bigl(\frac{1-\tau}{2}\Bigr)^{n-1}\chi_0\Big(\frac{1+\tau}{1-\tau}\Bigr) (2\pi)^{-1} \\
      &\qquad \times \int_{\Im\sigma=-\alpha} \Bigl(\frac{1+\tau}{1-\tau}\Bigr)^{i\sigma} I(P,y,\sigma)^{-1} \\
      &\qquad\qquad \times \int_0^\infty s'{}^{-i\sigma}\chi_0(s') (1+s')^{n-1} \wh{r_{1 1}}\Bigl(y,\frac{s'-1}{s'+1},(s'+1)\frac{1-\tau}{2}\eta\Bigr)\,\frac{\dd s'}{s'}\,\dd\sigma.
  \end{align*}
  Changing variables via $\kappa=\frac{s'-1}{s'+1}$ (i.e.\ $s'=\frac{1+\kappa}{1-\kappa}$), this equals
  \begin{align*}
      &\Bigl(\frac{1-\tau}{2}\Bigr)^{n-1}\chi_0\Big(\frac{1+\tau}{1-\tau}\Bigr) (2\pi)^{-1} \\
      &\qquad \times \int_{\Im\sigma=-\alpha} \Bigl(\frac{1+\tau}{1-\tau}\Bigr)^{i\sigma} I(P,y,\sigma)^{-1} \int_0^\infty \Bigl(\frac{1+\kappa}{1-\kappa}\Bigr)^{-i\sigma} \tilde r_{1 1}\bigl(y,\kappa,(1-\tau)\eta\bigr)\,\frac{2\,\dd\kappa}{1-\kappa^2}\,\dd\sigma,
  \end{align*}
  where $\tilde r_{1 1}(y,\kappa,\zeta)=\chi_0(\frac{1+\kappa}{1-\kappa})(\frac{1-\kappa}{2})^{-(n-1)}\wh{r_{1 1}}(y,\kappa,\frac{\zeta}{1-\kappa})$ is supported in $\kappa<0$ and thus lies in
  \[
    \sS\bigl(\R_\zeta;\cA_\phg^{(\cE_+,\emptyset)}([-1,1]_\kappa)\bigr)
  \]
  with smooth dependence on $y\in\R^{n-1}$. We thus conclude that
  \[
    \wh{q_{1 2}^{(0)}}(y,\tau,\eta) \in \sS\bigl(\R^{n-1}_\eta;\cA_\phg^{(\wt{\cE_+}(0),\emptyset)}([-1,1]_\tau)\bigr)
  \]
  inherits the smooth dependence on $\eta$ across $\eta=0$ from $\wh{r_{1 1}}(y,\tau,\eta)$, and like the latter vanishes at $\eta=0$. By (the proof of) the second part of Lemma~\ref{LemmaCNResidual}, this implies the existence of an element of
  \[
    Q_{1 2}^{(0)}\in\Psi_{0'}^{-\infty,(\wt{\cE_+}(0),\N_0,\emptyset,\emptyset)}(X,\Omegazero^{\frac12}X),\qquad
    \hat N(Q_{1 2}^{(0)},y,\hat\eta)=B_0(y,\hat\eta).
  \]
  Thus, setting
  \[
    R_{1 1}^{(0)} := R_{1 1} - P Q_{1 2}^{(0)} \in \Psi_{0'}^{-\infty,(\wt{\cE_+}(0),\N_0,\emptyset,\cE_-)}(X,\Omegazero^{\frac12}X),
  \]
  we have $\hat N(R_{1 1}^{(0)},y,\hat\eta)\in\Psi_{\bop,\scop}^{-\infty,(\wt{\cE_+}(0)+1,\N_0+1,\cE_-)}([0,\infty],\Omegab^{\frac12})$, and indeed the sharper regularity across $\eta=0$ captured by Lemma~\ref{LemmaCNResidual}.
  
  We then solve away the improved leading order term of $R_{1 1}^{(0)}$ at $\lb_\bop$ using the same argument, but now taking the Mellin transform along a contour $\Im\sigma=-\alpha'$ lower in the complex plane. Concretely, with $\alpha_0$ defined by~\eqref{EqPalpha0}, we have $\alpha_0+1=\min_{(z,k)\in\wt{\cE_+}(0)+1}\Re z$; we then pick $\eps\in(0,1)$ such that no $(z,k)\in\wt{\cE_+}(0)+1$ has real part in $(\alpha_0+1-\eps,\alpha_0+1)$, and take $\alpha'=\alpha_0+1-\frac{\eps}{2}$. Setting
  \[
    \cE'(1):=(\wt{\cE_+}(0)+1)\extcup\,\{(z,k)\in\cE\colon\Re z\geq\alpha_0+1\}\subset\wt{\cE_+}(1),
  \]
  this produces an operator $B_1(y,\hat\eta)\in\Psi_{\bop,\scop}^{-\infty,(\cE'(1),\N_0+1,\emptyset)}([0,\infty],\Omegab^{\frac12})$ in the range of the reduced normal operator map, and indeed with $\wh{q_{1 2}^{(1)}}$ defined analogously to~\eqref{EqCNRecover} (relative to $B_1$) satisfying
  \[
    \wh{q_{1 2}^{(1)}}(y,\tau,\eta) \in \sS\bigl(\R^{n-1}_\eta;\cA_\phg^{(\cE'(1),\emptyset)}([-1,1]_\tau)\bigr),\qquad \wh{q_{1 2}^{(1)}}(y,\tau,0)=0,
  \]
  with the property that
  \[
    \hat N(R_{1 1}^{(1)},y,\hat\eta) := \hat N(R_{1 1}^{(0)},y,\hat\eta) - \hat N(P,y,\hat\eta)B_1(y,\hat\eta) \in \Psi_{\bop,\scop}^{-\infty,(\cE'(1)+1,\N_0+1,\cE_-)}([0,\infty],\Omegab^{\frac12}).
  \]
  Note that $\cE'(1)+1\subset\{(z,k)\in\wt{\cE_+}(1)+1\colon\Re z\geq\alpha_0+2\}$ encodes one more order of decay than the index set $\wt{\cE_+}(0)+1$ of the previous error term $R_{1 1}^{(0)}$.
  
  Proceeding iteratively, we obtain a sequence of partially Fourier transformed, de-den\-si\-tized Schwartz kernels
  \begin{align*}
    &\wh{q_{1 2}^{(j)}}(y,\tau,\eta) \in \sS\bigl(\R^{n-1}_\eta;\cA_\phg^{(\cE'(j),\emptyset)}([-1,1]_\tau)\bigr),\qquad \wh{q_{1 2}^{(j)}}(y,\tau,0)=0, \\
    &\qquad \cE'(j) := (\cE'(j-1)+1) \extcup\,\{ (z,k)\in\cE \colon \Re z\geq\alpha_0+j \} \subset \wt{\cE_+}(j),
  \end{align*}
  corresponding to reduced normal operators $B_j(y,\hat\eta)\in\Psi_{\bop,\scop}^{-\infty,(\cE'(j),\N_0+1,\emptyset)}([0,\infty],\Omegab^{\frac12})$ via the relationship~\eqref{EqCNRecover}, so that
  \[
    \hat N(R_{1 1}^{(j)},y,\hat\eta) := \hat N(R_{1 1}^{(j-1)},y,\hat\eta) - \hat N(P,y,\hat\eta)B_j(y,\hat\eta) \in \Psi_{\bop,\scop}^{-\infty,(\cE'(j)+1,\N_0+1,\cE_-)}([0,\infty],\Omegab^{\frac12}).
  \]
  
  We then asymptotically sum the $q_{1 2}^{(j)}$, $j=0,1,2,\ldots$, at $\tau=-1$; noting that $\sS(\R^{n-1})=\cA_\phg^\emptyset(\ol{\R^{n-1}})$, this is a standard asymptotic sum for polyhomogeneous conormal distributions on $[-1,0)_\tau\times\ol{\R^{n-1}}$. Thus, there exists $q_{1 2}=q_{1 2}(y,\tau,\eta)$,
  \[
    q_{1 2} \sim \sum_{j\geq 0} q_{1 2}^{(j)} \in \sS\bigl(\R^{n-1}_\eta;\cA_\phg^{(\wt{\cE_+},\emptyset)}([-1,1]_\tau)\bigr),\qquad q_{1 2}(y,\tau,0)=0,
  \]
  with smooth $y$-dependence, corresponding to a reduced normal operator
  \[
    B(y,\hat\eta):=\hat N(Q_{1 2},y,\hat\eta)\in\Psi_{\bop,\scop}^{-\infty,(\wt{\cE_+},\N_0+1,\emptyset)}([0,\infty],\Omegab^{\frac12})
  \]
  of an operator $Q_{1 2}$ as in~\eqref{EqPQ12}, so that~\eqref{EqPR2}--\eqref{EqPR2Norm} hold. While~\eqref{EqPR2} states that the restriction of the Schwartz kernel of $R_{1 2}$ to $\ff'$ has index set $\wt{\cE_+}$ at $\lb'\cap\ff'$, Lemma~\ref{LemmaCNResidual} and the membership~\eqref{EqPR2Norm} show that this can be improved to the index set $\emptyset$. Thus, we have
  \begin{align*}
    &\hat N\bigl(P(Q_0+Q_{1 1}+Q_{1 2}),y,\hat\eta\bigr) = I - \hat N(R_3,y,\hat\eta), \\
    &\qquad R_3 \in \Psi_{0'}^{-\infty,(\emptyset,\N_0,\emptyset,\cE_-+(n-1))}(X,\Omegazero^{\frac12}X) = \Psi_0^{-\infty,(\emptyset,\N_0,\cE_-+(n-1))}(X,\Omegazero^{\frac12}X), \\
    &\qquad \hat N(R_3,y,\hat\eta)=\hat N(R_{1 2},y,\hat\eta) \in \Psi_{\bop,\scop}^{-\infty,(\emptyset,\N_0+1,\cE_-)}([0,\infty],\Omegab^{\frac12}),
  \end{align*}
  
  \pfstep{(3) Solving away the error at $\ff_{\bop,0}$.} We now solve away the error $R_3$ using an asymptotic Neumann series argument. To this end, note that by Proposition~\ref{PropCNbsc}, we have
  \begin{gather}
    R_3^j \in \Psi_0^{-\infty,(\emptyset,\N_0,\extcup^j(\cE_-+(n-1)))}(X,\Omegazero^{\frac12}X), \nonumber\\
  \label{EqPR3j}
    \hat N(R_3^j,y,\hat\eta)=\hat N(R_3,y,\hat\eta)^j \in \Psi_{\bop,\scop}^{-\infty,(\emptyset,\N_0+j,\wh{\cE_-}(j)+(n-1))}([0,\infty],\Omegab^{\frac12})
  \end{gather}
  for any $j\in\N$, where $\extcup^j\cE=\cE\extcup\cdots\extcup\cE$ ($j$ extended unions). The stated membership of $R_3^j$ shows that Lemma~\ref{LemmaCNResidual} is applicable, and in view of the stated membership of $\hat N(R_3^j,y,\hat\eta)$, we may replace $R_3^j$ by an operator
  \[
    R_3^{(j)} \in \Psi_0^{-\infty,(\emptyset,\N_0,\wh{\cE_-}+(n-1))}(X,\Omegazero^{\frac12}X)
  \]
  with $\hat N(R_3^j,y,\hat\eta)=N(R_3^{(j)},y,\hat\eta)$; the point is that the index set of $R_3^{(j)}$ at $\rb$ is fixed.
  
  We wish to take an asymptotic sum of $\hat N(R_3^{(j)},y,\hat\eta)$ at $\ff_{\bop,0}$ while remaining in the range of the reduced normal operator map; this is most easily done by working with the partially Fourier transformed restricted Schwartz kernels $R_3^{(j)}|_{\ff'}$,
  \[
    \wh{r_3^{(j)}}(y,\tau,\eta) \in \sS\bigl(\R^{n-1}_\eta;\cA_\phg^{(\emptyset,\wh{\cE_-}+(n-1))}([-1,1]_\tau)\bigr).
  \]
  Note that by~\eqref{EqCNRecover} and~\eqref{EqPR3j}, we have $|\wh{r_3^{(j)}}(y,\tau,\eta)|\lesssim(1+\tau)^N(1-\tau)^{\beta_0+(n-1)}|\eta|^j$ for $|\eta|<1$, and with $\beta_0<\Re\wh{\cE_-}$ and $N\in\R$; but since $\wh{r_3^{(j)}}$ is smooth in $\eta$, this implies that $\wh{r_3^{(j)}}$ in fact vanishes together with all its derivatives of order $\leq j-1$ at $\eta=0$. We record this as
  \[
    \wh{r_3^{(j)}} \in \cI^j\sS\bigl(\R^{n-1}_\eta;\cA_\phg^{(\emptyset,\wh{\cE_-}+(n-1))}([-1,1]_\tau)\bigr),
  \]
  where $\cI^j\sS(\R^{n-1})$ is the space of Schwartz functions vanishing at $0$ together with all their derivatives of order $\leq j-1$.
  
  We can thus asymptotically sum the $\wh{r_3^{(j)}}$ at $\eta=0$, obtaining
  \begin{align*}
    &\tilde r_3(y,\tau,\eta) \in \sS\bigl(\R_\eta^{n-1};\cA_\phg^{(\emptyset,\wh{\cE_-}+(n-1))}([-1,1]_\tau)\bigr), \\
    &\tilde r_3 - \sum_{j=1}^{N-1} \wh{r_3^{(j)}} \in \cI^N\sS\bigl(\R^{n-1};\cA_\phg^{(\emptyset,\wh{\cE_-}+(n-1))}([-1,1])\bigr),\qquad N\in\N.
  \end{align*}
  Upon taking the inverse Fourier transform of $\tilde r_3$ in $\eta$, we obtain, upon extension off $\ff'$, an operator
  \begin{align}
  \label{EqPIndR3}
    &\tilde R_3 \in \Psi_0^{-\infty,(\emptyset,\N_0,\wh{\cE_-}+(n-1))}(X,\Omegazero^{\frac12}X), \\
    &\hat N(\tilde R_3,y,\hat\eta) - \sum_{j=1}^{N-1} \hat N(R_3^j,y,\hat\eta) \in \Psi_{\bop,\scop}^{-\infty,(\emptyset,\N_0+N,\wh{\cE_-})}([0,\infty],\Omegab^{\frac12}),\qquad N\in\N. \nonumber
  \end{align}
  Therefore, $I+\tilde R_3$ is an approximate inverse of $I-R_3$ on the reduced normal operator level, in the sense that
  \[
    (I-R_3)(I+\tilde R_3) = I - R'
  \]
  where
  \begin{equation}
  \label{EqPRprime}
  \begin{split}
    &R'\in\Psi_0^{-\infty,(\emptyset,\N_0,\wh{\cE_-}+(n-1))}(X,\Omegazero^{\frac12}X), \\
    &\hat N(R',y,\hat\eta)\in\Psi_{\bop,\scop}^{-\infty,(\emptyset,\emptyset,\wh{\cE_-})}([0,\infty],\Omegab^{\frac12}) = \Psi^{-\infty,(\emptyset,\wh{\cE_-})}([0,\infty],\Omegab^{\frac12}).
  \end{split}
  \end{equation}

  \pfstep{(4) Full right parametrix.} The memberships~\eqref{EqPIndQ11} and \eqref{EqPQ12} give $Q_0+Q_{1 1}+Q_{1 2}\in\Psi_0^{-m}(X,\Omegazero^{\frac12}X)+\Psi_0^{-\infty,(\wt{\cE_+},\N_0,\cE_-+(n-1))}(X,\Omegazero^{\frac12}X)$. Enlarging the index set of this operator as well as of~\eqref{EqPIndR3} at $\rb$ to $\wt{\cE_-}+(n-1)$ for symmetry reasons (already having in mind the construction of a left parametrix below), Proposition~\ref{PropCC0} shows that
  \[
    (Q_0+Q_{1 1}+Q_{1 2})(I+\tilde R_3) \in \Psi_0^{-m}(X,\Omegazero^{\frac12}X) + \Psi_0^{-\infty,\tilde\cE''}(X,\Omegazero^{\frac12}X),
  \]
  with $\tilde\cE''=(\tilde\cE_\lb,\tilde\cE'_\ff,\tilde\cE'_\rb)$ where we can take
  \[
    \tilde\cE_\lb=\wt{\cE_+}\extcup\wt{\cE_+},\quad
    \tilde\cE'_\ff=\N_0\extcup\,\bigl(\wt{\cE_+}+\wt{\cE_-}+(n-1)\bigr),\quad
    \tilde\cE'_\rb=(\wt{\cE_-}\extcup\wt{\cE_-}) + (n-1).
  \]
  Setting $\tilde\cE'=(\tilde\cE_\lb,\N_0,\tilde\cE'_\rb)$, extension off $\ff$ gives an operator
  \begin{equation}
  \label{EqPQ1prime}
    Q_1' \in \Psi_0^{-m}(X,\Omegazero^{\frac12}X) + \Psi_0^{-\infty,\tilde\cE'}(X,\Omegazero^{\frac12}X)
  \end{equation}
  with the property that
  \[
    N(P Q_1',y,\hat\eta) = I - N(R',y,\hat\eta)
  \]
  with $R'$ as in~\eqref{EqPRprime}.

  \pfstep{(5) Left parametrix; true inverse.} Analogous arguments (or application of the right parametrix construction to $P^*$ followed by taking adjoints) produce a left parametrix $\tilde Q_1'$ in the same space as $Q_1'$ in~\eqref{EqPQ1prime} with
  \begin{align*}
    &N(\tilde Q_1' P,y,\hat\eta) = I - N(\tilde R',y,\hat\eta), \\
    &\qquad\tilde R' \in \Psi_0^{-\infty,(\wt{\cE_+},\N_0,\emptyset)}(X,\Omegazero^{\frac12}X), \qquad
    \hat N(\tilde R',y,\hat\eta) \in \Psi^{-\infty,(\wt{\cE_+},\emptyset)}([0,\infty],\Omegab^{\frac12}).
  \end{align*}
  
  Since $\hat N(R',y,\hat\eta)$ maps $H_{\bop,\scop}^{s-m,(\alpha,r-m)}([0,\infty],\Omegab^{\frac12})$ (for any $s,r$) into $\cA_\phg^\emptyset([0,\infty],\Omegab^{\frac12})$, it is a compact operator on $H_{\bop,\scop}^{s-m,(\alpha,r-m)}([0,\infty],\Omegab^{\frac12}X)$; similarly, $\hat N(\tilde R',y,\hat\eta)$ is a compact operator on $H_{\bop,\scop}^{s,(\alpha,r)}([0,\infty],\Omegab^{\frac12})$ (as it maps this space into $H_{\bop,\scop}^{\infty,(\alpha+\eps,\infty)}([0,\infty],\Omegab^{\frac12})$ for $\eps>0$ chosen so small that $\alpha+\eps<\Re\wt{\cE_+}$). Therefore,
  \begin{equation}
  \label{EqPNormOpMap}
    \hat N(P,y,\hat\eta)\colon H_{\bop,\scop}^{s,(\alpha,r)}([0,\infty],\Omegab^{\frac12})\to H_{\bop,\scop}^{s-m,(\alpha,r-m)}([0,\infty],\Omegab^{\frac12})
  \end{equation}
  is Fredholm, with approximate right, resp.\ left inverse $\hat N(Q_1',y,\hat\eta)$, resp.\ $\hat N(\tilde Q_1',y,\hat\eta)$. The full ellipticity assumption on $P$ now implies that $\hat N(P,y,\hat\eta)$ has trivial kernel and cokernel (for the particular value of $\alpha$, but for any $s,r$), and thus is invertible. The Schwartz kernel of the inverse of~\eqref{EqPNormOpMap} can then be related to the left and right parametrices constructed above via
  \begin{equation}
  \label{EqPNormOpMap2}
    \hat N(P,y,\hat\eta)^{-1} = \hat N(Q_1',y,\hat\eta) + \hat N(\tilde Q_1',y,\hat\eta)\hat N(R',y,\hat\eta) + \hat N(\tilde R',y,\hat\eta) \hat N(P,y,\hat\eta)^{-1} \hat N(R',y,\hat\eta).
  \end{equation}

  We first claim that
  \begin{equation}
  \label{EqPIdeal}
    \hat N(\tilde R',y,\hat\eta)\hat N(P,y,\hat\eta)^{-1}\hat N(R',y,\hat\eta) \in \Psi^{-\infty,(\wt{\cE_+},\wt{\cE_-})}([0,\infty],\Omegab^{\frac12}),
  \end{equation}
  with smooth dependence on $y$ and $\hat\eta$. Indeed, the Schwartz kernel of~\eqref{EqPIdeal} can be computed by applying $\hat N(P,y,\hat\eta)^{-1}$ and then the (smoothing) operator $N(\tilde R',y,\hat\eta)$ to the restrictions of the Schwartz kernel of $N(R',y,\hat\eta)$ to level sets of $t'$ (see step (6) below for a more delicate version of this argument); the statement~\eqref{EqPIdeal} follows from this description. Smoothness in $y$ and $\hat\eta$ follows by direct differentiation and repeated application of (the inverse of)~\eqref{EqPNormOpMap}.
  
  By Proposition~\ref{PropCNConstr} then, the operator~\eqref{EqPIdeal} lies in the range of the reduced normal operator; it is equal to $N(R'',y,\hat\eta)$ for some $R''\in\Psi_0^{-\infty,(\wt{\cE_+},\N_0,\wt{\cE_-}+(n-1))}(X,\Omegazero^{\frac12}X)$. Taking $P^-$ to be an extension off $\ff$ of $Q_1'+\tilde Q_1'R'+R''$ (cf.\ \eqref{EqPNormOpMap2}), we have thus shown that
  \[
    \hat N(P,y,\hat\eta)^{-1} = \hat N(P^-,y,\hat\eta),\qquad
    P^- \in \Psi_0^{-m}(X,\Omegazero^{\frac12}X) + \Psi_0^{-\infty,\tilde\cE}(X,\Omegazero^{\frac12}X),
  \]
  where $\tilde\cE=(\tilde\cE_\lb,\N_0,\tilde\cE_\rb)$ with $\tilde\cE_\rb:=\tilde\cE'_\rb\extcup\,(\wt{\cE_-}+(n-1))$ in view of Proposition~\ref{PropCC0}.

  \pfstep{(6) Sharpening of the index sets.} We first claim that the index set of $P^-|_\ff$ at $\lb$ is contained in $\wh{\cE_+}$; to prove this, we need to show that $\hat N(P^-,y,\hat\eta)$ has index set $\wh{\cE_+}$ at $\lb_\bop$ for all $y,\hat\eta$. For this purpose, fix $t'_0\in(0,\infty)$ and a cutoff function $\chi\in\CIc([0,\infty))$ with $\chi(t)=1$ for $t<\half t'_0$ and $\chi(t)=0$ for $t>\frac34 t'_0$. In the coordinates $t\geq 0$, $t'\in(0,\infty)$ on $[0,\infty]_{\bop,\scop}^2$ near $\lb_\bop\setminus(\ff_{\bop,0}\cup\ff_{\bop,\infty})$, the cut-off restriction
  \[
    q_{t'_0}(t) := \chi(t)p^-_{t'_0}(t)\in\cA_\phg^{\tilde\cE_\lb}([0,t'_0)_t,\Omegab^{\frac12}),\qquad p^-_{t'_0}(t):=\Bigl|\frac{\dd t'}{t'}\Bigr|^{-\frac12}\hat N(P^-,y,\hat\eta)(t,t'_0),
  \]
  lies in the approximate nullspace of $\hat N(P,y,\hat\eta)$; to wit,
  \[
    \hat N(P,y,\hat\eta)q_{t_0'} = \chi\hat N(P,y,\hat\eta)p_{t'_0}^- + [\hat N(P,y,\hat\eta),\chi] p^-_{t_0'} \in \cA_\phg^\emptyset\bigl([0,\infty),\Omegab^{\frac12}[0,\infty)\bigr).
  \]
  Here we used that $\hat N(P,y,\hat\eta)p_{t'_0}^-$ is supported at $t=t'_0$ and thus outside of $\supp\chi$; and for the second term we also used that the Schwartz kernel of $[\hat N(P,y,\hat\eta),\chi]$ vanishes to infinite order at all boundary hypersurfaces of $[0,\infty]_{\bop,\scop}$ and is supported in $t,t'<t'_0$. But as in step (2) of the proof, we can then use the (inverse) Mellin transform and the indicial family to show that $q_{t'_0}\in\cA_\phg^{\wh{\cE_+}}([0,\infty),\Omegab^{\frac12}[0,\infty))$. This proves the claim. By extension off $\ff$, we may thus replace the index set $\tilde\cE_\lb$ of $P^-$ at $\lb$ by $\wh{\cE_+}$.

  The index set of $P^-|_\ff$ at $\rb$ can be similarly improved by using the fact that
  \[
    \hat N(P,y,\hat\eta)^*\hat N(P^-,y,\hat\eta)^*=0,
  \]
  where we define adjoints with respect to the $L^2([0,\infty],\Omegab^{\frac12})$ inner product. Note that $(z,k)\in\Specb(\hat N(P,y,\hat\eta)^*)$ if and only if $(-\bar z,k)\in\Specb(\hat N(P,y,\hat\eta))$; indeed, the inverse of
  \[
    I\bigl(\hat N(P,y,\hat\eta)^*,y,\sigma\bigr)=\overline{I\bigl(\hat N(P,y,\hat\eta),y,\bar\sigma\bigr)}
  \]
  has a pole at $-i z$ of order $\geq k+1$ iff $i\bar z$ is a pole of $I(\hat N(P,y,\hat\eta),y,\sigma)^{-1}$ of order $\geq k+1$. We thus conclude that $P^*$ is fully elliptic at the weight $-\alpha$, and moreover that
  \begin{align*}
    &\{ (z,k) \in \Specb(\hat N(P,y,\hat\eta)^*) \colon \Re z>-\alpha \} \\
    &\qquad = \{ (-z,k) \colon (\bar z,k) \in \Specb(\hat N(P,y,\hat\eta)),\ \Re z<\alpha \} = \{(\bar z,k) \colon (z,k)\in\cE_- \}.
  \end{align*}

  On the other hand, the index set of $\hat N(P^-,y,\hat\eta)^*$ at $\lb_\bop$ is $\{(\bar z,k)\colon (z,k)\in\tilde\cE_\rb\}$. Following the previous arguments thus shows that we can shrink the index set of $\hat N(P^-,y,\hat\eta)^*$ at $\lb_\bop$ to the set of $(z,k)$ so that $(\bar z,k)\in\wh{\cE_-}$; that is, we can replace $\tilde\cE_\rb$ by $\wh{\cE_-}$. The proof is complete.
\end{proof}

\begin{proof}[End of proof of Theorem~\usref{ThmI}]
  We may now apply Proposition~\ref{PropPNInv} to the task~\eqref{EqPQ1}: we set $Q_1=P^- R_0\in\Psi_0^{-\infty,(\wh{\cE_+},\N_0,\wh{\cE_-}+(n-1))}(X,\Omegazero^{\frac12}X)$. We then have
  \[
    P(Q_0+Q_1) = I-R_1,\qquad R_1:=R_0-P Q_1 \in \Psi_0^{-\infty,(\wh{\cE_+},\N_0+1,\wh{\cE_-}+(n-1))}(X,\Omegazero^{\frac12}X),
  \]
  since all normal operators of $R_1$ vanish (see also Remark~\ref{RmkCNDecReg}).
  
  \pfstep{Solving away the error at $\lb$.} This is analogous to the corresponding step in the inversion of the normal operator and involves, via the Mellin transform, the inversion of the indicial family $I(P,y,\sigma)$. Thus, there exists $Q_2\in\Psi_0^{-\infty,(\wh{\cE_+},\N_0+1,\emptyset)}(X,\Omegazero^{\frac12}X)$ so that
  \[
    P(Q_0+Q_1+Q_2) = I-R_2,\qquad
    R_2:=R_1-P Q_1 \in \Psi_0^{-\infty,(\emptyset,\N_0+1,\wh{\cE_-}+(n-1))}(X,\Omegazero^{\frac12}X).
  \]

  Here, naive accounting of index sets would suggest merely
  \[
    Q_2 \in \Psi_0^{-\infty,(\wt{\cE_+}(0),\N_0+1,\emptyset)}(X,\Omegazero^{\frac12}X),
  \]
  where we recall~\eqref{EqPTildeSets}. The fact that we can take $\wh{\cE_+}$ as the index set of $Q_2$ at $\lb$ can be seen by adapting the argument in step (6) of the proof of Proposition~\ref{PropPNInv} to the 0-setting; the only difference is that now we need to localize also in the boundary variables. Thus, denote the Schwartz kernel of $Q_0+Q_1+Q_2$ by
  \[
    q(x,y,x',y')\Bigl|\frac{\dd x}{x}\frac{\dd y}{x^{n-1}}\frac{\dd x'}{x'}\frac{\dd y'}{x'{}^{n-1}}\Bigr|^{\frac12},
  \]
  fix $z'_0=(x'_0,y'_0)$ with $x'_0>0$ and $y'_0\in\R^{n-1}$, and consider the restriction
  \[
    q_{z'_0} = q(-,-,x'_0,y'_0)\Bigl|\frac{\dd x}{x}\frac{\dd y}{x^{n-1}}\Bigr|^{\frac12} \in \cA_\phg^{\wt{\cE_+}(0)}(X,\Omegazero^{\frac12}X)
  \]
  Let 
  \[
    K = [0,x'_0)_x \times \{ y\in\R^{n-1} \colon |y-y'_0|<x'_0 \}
  \]
  and fix a sequence of cutoff functions $\chi_0,\chi_1,\ldots\in\CIc(K)$ so that
  \[
    \chi_j(x,y)=1,\qquad x\leq \half x'_0,\quad |y-y'_0|<\half x'_0;\qquad
    \supp\chi_j\subset\{\chi_{j-1}=1\}.
  \]
  We have $\chi_0 q_{z'_0}\in\cA_\phg^{\wt{\cE_+}(0)}(K,\Omegazero^{\frac12}K)$. We claim that, for $j\in\N_0$,
  \begin{equation}
  \label{EqPchij}
    \chi_{j+1} P(\chi_j q_{z'_0}) = \chi_{j+1} P q_{z'_0} + \chi_{j+1}[P,\chi_j]q_{z'_0} \in \cA_\phg^\emptyset(K,\Omegazero^{\frac12}K).
  \end{equation}
  Indeed, for the first term, we note that $P q_{z'_0}$ is given by the restriction of the Schwartz kernel of $I-R_2$ to $(x',y')=z'_0$ (ignoring half-density factors) and thus vanishes to infinite order at $\pa X$ and has singular support equal to $(x,y)=z'_0$, thus outside of $\supp\chi_{j+1}$. For the second term, note simply that $\chi_{j+1}[P,\chi_j]\in\Psi_0^{m-1,(\emptyset,\emptyset,\emptyset)}(X,\Omegazero^{\frac12}X)$ and use Lemma~\ref{LemmaCPhg}. Starting with $j=0$ in~\eqref{EqPchij} and passing to indicial operators shows that $\chi_0 q_{z'_0}$ must lie in $\cA_\phg^{\cE_+}$ up to an error term in $\cA^{\alpha_0+1}$ where $\alpha_0$ was defined in~\eqref{EqPalpha0}. Using this information in equation~\eqref{EqPchij} with $j=1$ gives $\chi_1 q_{z'_0}\equiv\cA_\phg^{\wh{\cE_+}(1)}\bmod\cA^{\alpha_0+2}$; and upon using induction, we conclude that $\chi_\infty q_{z'_0}\in\cA_\phg^{\wh{\cE_+}}(K,\Omegazero^{\frac12}K)$ where $\chi_\infty\in\CIc(K)$ has $\supp\chi_\infty\subset\{\chi_j=1\}$ for all $j$. Since we can take $\chi_\infty$ to be equal to $1$ in a neighborhood of $(0,y'_0)$, and since $x'_0$ and $y'_0$ were arbitrary, we conclude that we can reduce the index set of $Q_2$ at $\lb$ to $\wh{\cE_+}$, as claimed.
  
  \pfstep{Solving away the error at $\ff$.} This is again an asymptotic Neumann series argument. Let $\Psi_0^{-\infty,(\emptyset,\N_0+1,\wh{\cE_-^\flat}+(n-1))}(X,\Omegazero^{\frac12}X)\ni\tilde R_2\sim\sum_{j=1}^\infty R_2^j$, where we recall $\wh{\cE_-^\flat}=\wh{\cE_-}\extcup\,(\wh{\cE_-}+1)\extcup\cdots$. Then Proposition~\ref{PropCC0} and the fact that $Q_0+Q_1+Q_2\in\Psi_0^{-m}(X,\Omegazero^{\frac12}X)+\Psi_0^{-\infty,(\wh{\cE_+},\N_0,\wh{\cE_-}+(n-1))}(X,\Omegazero^{\frac12}X)$ give
  \begin{equation}
  \label{EqPQ}
    Q := (Q_0+Q_1+Q_2)(I+\tilde R_2) \in \Psi_0^{-m}(X,\Omegazero^{\frac12}X) + \Psi_0^{-\infty,(\wh{\cE_+},\wh{\cE_\ff^-},\wh{\cE_-^\sharp})}(X,\Omegazero^{\frac12}X);
  \end{equation}
  and we have
  \[
    P Q = I - R,\qquad R\in\Psi_0^{-\infty,(\emptyset,\emptyset,\wh{\cE_-^\flat}+(n-1))}(X,\Omegazero^{\frac12}X) \subset \Psi^{-\infty,(\emptyset,\wh{\cE_-^\flat}+(n-1))}(X,\Omegazero^{\frac12}X).
  \]
  
  The construction of a left parametrix is analogous, or one can take $Q'$ to be the adjoint of a full right parametrix for $P^*$. The proof of Theorem~\ref{ThmI} is complete.
\end{proof}

\begin{rmk}[Index set at the right boundary]
\label{RmkPrb}
  The index set of the right parametrix $Q$ at $\rb$ can in general not be reduced without modifying $Q$. Indeed, note that one can add to $Q$ any element of $\Psi_0^{-\infty,(\emptyset,\cF)}(X,\Omegazero^{\frac12}X)$ for any index set $\cF$, and one will still have $P Q=I-R$ with $R$ fully residual. That is, sharpening $Q$ at $\rb$ to a smaller index set requires either more careful bookkeeping or a suitable modification of $Q$. (An analogous comment applies to the index set of the left parametrix $Q'$ at $\lb$.)
\end{rmk}

Relative to the intrinsic $L^2$-space on 0-$\half$-densities $L^2(X,\Omegazero^{\frac12}X)$, we define weighted 0-Sobolev spaces in the usual manner. Thus, let $\rho\in\CI(X)$ denote a boundary defining function. For $a\in\R$ and $s\geq 0$, we then set
\[
  \rho^a H_0^s(X,\Omegazero^{\frac12}X) = \bigl\{ u=\rho^a u_0 \colon u_0\in L^2(X,\Omegazero^{\frac12}X),\ A u_0\in L^2(X,\Omegazero^{\frac12}X) \bigr\},
\]
where $A\in\Psi_0^s(X,\Omegazero^{\frac12}X)$ has an elliptic principal symbol; and for $s<0$, we define the space $\rho^a H_0^s(X,\Omegazero^{\frac12}X)$ to consist of all sums $u_0+A u_1$ where $u_0,u_1\in\rho^a L^2(X,\Omegazero^{\frac12}X)$, with $A\in\Psi_0^{|s|}(X,\Omegazero^{\frac12}X)$ any (fixed) operator with an elliptic principal symbol. We remark that $L^2(X,\Omegazero^{\frac12}X)=\rho^{\frac{n-1}{2}}L^2(X,\Omegab^{\frac12}X)$ (with equivalent norms), similarly for weighted 0-Sobolev spaces valued in b-$\half$-densities.

\begin{cor}[Generalized inverse]
\label{CorPGen}
  Suppose $X$ is compact. Let $P\in\Psi_0^m(X,\Omegazero^{\frac12}X)$ be fully elliptic at the weight $\alpha$. Then
  \begin{equation}
  \label{EqPGenP}
    P\colon\rho^{\alpha-\frac{n-1}{2}} H_0^s(X,\Omegazero^{\frac12}X)\to\rho^{\alpha-\frac{n-1}{2}} H_0^{s-m}(X,\Omegazero^{\frac12}X)
  \end{equation}
  (i.e.\ $P\colon\rho^\alpha H_0^s(X,\Omegab^{\frac12}X)\to \rho^\alpha H_0^{s-m}(X,\Omegab^{\frac12}X)$) is a Fredholm operator. Denote by
  \[
    G\colon\rho^{\alpha-\frac{n-1}{2}}H_0^{s-m}(X,\Omegazero^{\frac12}X)\to\rho^{\alpha-\frac{n-1}{2}}H_0^s(X,\Omegazero^{\frac12}X)
  \]
  its generalized inverse; that is, $G|_{(\ran P)^\perp}\equiv 0$, and for $f\in\ran P$ we set $G f=u$ where $u$ is the unique solution of $P u=f$ with $u\perp\ker P$; here, orthogonal complements are defined with respect to the $\rho^{\alpha-\frac{n-1}{2}}L^2(X,\Omegazero^{\frac12}X)$ inner product. Then, with index sets defined as in Theorem~\usref{ThmI}, we have
  \[
    G\in\Psi_0^{-m}(X,\Omegazero^{\frac12}X)+\Psi_0^{-\infty,(\wh{\cE_+}\extcup\,(\wh{\cE_-}+2\alpha),\cE_\ff,(\wh{\cE_-}\extcup\,(\wh{\cE_+}-2\alpha)+(n-1))}(X,\Omegazero^{\frac12}X),
  \]
  where with $\wh{\cE_\ff}:=\N_0\extcup\,(\wh{\cE_+^\sharp}+\wh{\cE_-^\sharp}+(n-1))$ we put\footnote{We make no attempt to optimize the index set $\cE_\ff$ here. We merely point out that $\cE_\ff$ is equal to the union of $\N_0$ with an index set all of whose elements $(z,k)$ have $\Re z>n-1$.}
  \[
    \cE_\ff=\wh{\cE_\ff} \cup \Bigl( \bigl(\wh{\cE_\ff}+\bigl[2(\wh{\cE_+^\flat}-\alpha)\cup 2(\wh{\cE_-^\flat}+\alpha)\bigr]+(n-1)\bigr) \extcup\,\bigl(\wh{\cE_+^\sharp}+\wh{\cE_-^\sharp}+(n-1)\bigr)\Bigr).
  \] Furthermore, the orthogonal projections $\Pi=I-G P$ to the nullspace of $P$ in~\eqref{EqPGenP} and $\Pi'=I-P G$ to the orthogonal complement of the range of $P$ satisfy
  \begin{equation}
  \label{EqPGenProj}
  \begin{split}
    \Pi &\in \Psi^{-\infty,(\wh{\cE_+},\wh{\cE_+}-2\alpha+(n-1))}(X,\Omegazero^{\frac12}X), \\
    \Pi' &\in \Psi^{-\infty,(\wh{\cE_-}+2\alpha,\wh{\cE_-}+(n-1))}(X,\Omegazero^{\frac12}X).
  \end{split}
  \end{equation}
  If $P$ is invertible, then $G=P^{-1}\in\Psi_0^{-m}(X,\Omegazero^{\frac12}X)+\Psi_0^{-\infty,(\wh{\cE_+},\wh{\cE_\ff},\wh{\cE_-}+(n-1))}(X,\Omegazero^{\frac12}X)$.
\end{cor}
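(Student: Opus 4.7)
The plan is to combine Theorem~\ref{ThmI} with standard Fredholm theory, tracking polyhomogeneous index sets carefully throughout.

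First I would establish the Fredholm property. Theorem~\ref{ThmI} produces a right parametrix $Q$ with $PQ=I-R$ and a left parametrix $Q'$ with $Q'P=I-R'$, where $R\in\Psi^{-\infty,(\emptyset,\wh{\cE_-^\flat}+(n-1))}(X,\Omegazero^{\frac12}X)$ and $R'\in\Psi^{-\infty,(\wh{\cE_+^\flat},\emptyset)}(X,\Omegazero^{\frac12}X)$. By construction $\Re\wh{\cE_+^\flat}>\alpha$ and $\Re\wh{\cE_-^\flat}>-\alpha$, so the Schwartz kernels of $R,R'$ map the space $\rho^{\alpha-(n-1)/2}H_0^s(X,\Omegazero^{\frac12}X)=\rho^\alpha H_0^s(X,\Omegab^{\frac12}X)$ into a strictly more decaying (and smoother) weighted Sobolev space. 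A Rellich-type argument then gives compactness of $R,R'$, whence~\eqref{EqPGenP} is Fredholm, and the generalized inverse $G$ is uniquely determined by $GP=I-\Pi$, $PG=I-\Pi'$, $\Pi G=G\Pi'=0$.

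Next I would characterize the polyhomogeneous structure of $\ker P$ and $(\ran P)^\perp$. Any $u\in\ker P$ satisfies $u=Q'Pu+R'u=R'u$, and an iterative application of the indicial family---entirely analogous to step~(6) of the proof of Proposition~\ref{PropPNInv} and to the $\lb$-improvement step at the end of the proof of Theorem~\ref{ThmI}---sharpens this to $u\in\cA_\phg^{\wh{\cE_+}}(X,\Omegazero^{\frac12}X)$. For the cokernel, working with the $L^2_\alpha:=\rho^{\alpha-(n-1)/2}L^2(X,\Omegazero^{\frac12}X)$ inner product, I would identify $(\ran P)^\perp$ with $\rho^{2\alpha-(n-1)}\ker P^\dagger$, where $P^\dagger$ is the formal $L^2(X,\Omegazero^{\frac12}X)$ adjoint. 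By step~(6) of the proof of Proposition~\ref{PropPNInv}, $P^\dagger$ is fully elliptic at weight $-\alpha$ with $\Specb(P^\dagger)=\{(-\bar z,k):(z,k)\in\Specb(P)\}$, so the relevant $\wh{\cE_+}$-type index set for $P^\dagger$ is (the conjugate of) $\wh{\cE_-}$. Choosing orthonormal bases $\{\phi_j\}$ of $\ker P$ and $\{\psi_k\}$ of $(\ran P)^\perp$ in $L^2_\alpha$, the Schwartz kernels of the projections are
\[
  \Pi(z,z')=\sum_j\phi_j(z)\overline{\phi_j(z')}\rho(z')^{(n-1)-2\alpha},\qquad \Pi'(z,z')=\sum_k\psi_k(z)\overline{\psi_k(z')}\rho(z')^{(n-1)-2\alpha},
\]
as sections of $\KD_0$, where the weight $\rho(z')^{(n-1)-2\alpha}$ converts the $L^2_\alpha$ pairing to the intrinsic integration. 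Reading off the left and right factor indices from the expansions of $\phi_j$, $\psi_k$ together with this weight yields~\eqref{EqPGenProj}.

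For the structure of $G$ I would use the identities
\[
  G=(I-\Pi)Q+GR\qquad\text{and}\qquad G=Q'(I-\Pi')+R'G,
\]
derived respectively from $GP=I-\Pi$, $\Pi G=0$, $PQ=I-R$ and the symmetric conditions $PG=I-\Pi'$, $G\Pi'=0$, $Q'P=I-R'$. Iterating the first gives $G=(I-\Pi)\sum_{j\ge 0}QR^j$ as an asymptotic sum, convergent since the index set of $R^j$ at $\rb$ improves as $j$ grows (by Proposition~\ref{PropCC0} applied iteratively); Proposition~\ref{PropCC0} applied to each $QR^j$ puts the sum into $\Psi_0^{-m}+\Psi_0^{-\infty,(\wh{\cE_+},\wh{\cE_\ff},\wh{\cE_-^\sharp}+(n-1))}$. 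The $\Pi Q$ subtraction, together with the symmetric argument using $Q'\Pi'$ from the left-parametrix identity, injects the additional index sets $\wh{\cE_-}+2\alpha$ at $\lb$ (via the left factor of $\Pi'$, seen through the symmetric formula) and $\wh{\cE_+}-2\alpha$ at $\rb$ (via the right factor of $\Pi$), producing the extended unions in the statement; the intricate $\cE_\ff$ at the front face emerges by combining all these compositions through Proposition~\ref{PropCC0}. The main obstacle will be this last bookkeeping step: systematically combining the two parametrix-based expressions for $G$ and applying Proposition~\ref{PropCC0} to track the index set at $\ff$ through $\Pi Q$, $Q'\Pi'$, $GR$, $R'G$ and their iterations, to obtain exactly the stated form of $\cE_\ff$. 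When $P$ is invertible, $\Pi=\Pi'=0$ and the asymptotic Neumann series $\sum QR^j$ directly gives $G=P^{-1}\in\Psi_0^{-m}+\Psi_0^{-\infty,(\wh{\cE_+},\wh{\cE_\ff},\wh{\cE_-}+(n-1))}$.
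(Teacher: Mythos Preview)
Your overall strategy---Fredholm property from Theorem~\ref{ThmI}, polyhomogeneity of kernel and cokernel via indicial arguments, explicit Schwartz kernels for $\Pi,\Pi'$, and then an algebraic expression for $G$---matches the paper. The gap is in the last step.

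The Neumann series $G=(I-\Pi)\sum_{j\geq 0}QR^j$ does \emph{not} converge as an asymptotic sum. The error $R$ lies in $\Psi^{-\infty,(\emptyset,\wh{\cE_-^\flat}+(n-1))}(X,\Omegazero^{\frac12}X)$, i.e.\ its Schwartz kernel lives on $X^2$ with infinite-order vanishing at $\pa X\times X$ and index set $\wh{\cE_-^\flat}+(n-1)$ at $X\times\pa X$. Composing two such operators leaves the right-boundary index set unchanged: the integration variable sees infinite-order vanishing from one factor, so nothing is gained at $\rb$. Thus $R^j$ has the \emph{same} $\rb$ index set for every $j$, and Proposition~\ref{PropCC0} (which concerns kernels on $X^2_0$, not $X^2$) does not help here. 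Your remainder $GR^N$ never improves, so the series gives no information about $G$ beyond what you already knew.

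The paper avoids this by combining your two identities into a single closed expression. Substituting $G=Q'(I-\Pi')+R'G$ into $G=(I-\Pi)Q+GR$ (or arguing as the paper does, starting from $G=(Q'P+R')G$) yields
\[
  G = Q' + R'Q + R'GR - Q'\Pi' - R'\Pi Q.
\]
The point is that in $R'GR$ the unknown $G$ is sandwiched between $R'\in\Psi^{-\infty,(\wh{\cE_+^\flat},\emptyset)}$ and $R\in\Psi^{-\infty,(\emptyset,\wh{\cE_-^\flat}+(n-1))}$; since $G$ is merely a bounded operator between Sobolev spaces, this composition lies in $\Psi^{-\infty,(\wh{\cE_+^\flat},\wh{\cE_-^\flat}+(n-1))}$ directly. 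All other terms are explicit compositions in the large calculus. One then sharpens the $\lb$ and $\rb$ index sets of $G$ a posteriori using $PG=I-\Pi'$ and $P^*G^*=I-(\Pi')^*$ together with indicial operator arguments, exactly as in your treatment of $\ker P$.

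One smaller point: the formula $\Specb(P^\dagger)=\{(-\bar z,k):(z,k)\in\Specb(P)\}$ you cite from step~(6) of Proposition~\ref{PropPNInv} is for the adjoint of $\hat N(P,y,\hat\eta)$ with respect to $L^2([0,\infty],\Omegab^{\frac12})$, not for the $L^2(X,\Omegazero^{\frac12}X)$ adjoint of $P$. Because $\Omegazero X\cong\rho^{-(n-1)}\Omegab X$, the correct relation is $\Specb(P^*)=\{(z,k):(-\bar z+(n-1),k)\in\Specb(P)\}$, and $\ker P^*\subset\cA_\phg^{\ol{\wh{\cE_-}}+(n-1)}$. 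This shift is needed to land on the stated index sets for $\Pi'$.
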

\begin{proof}
  Let $Q,Q',R,R'$ be as in Theorem~\ref{ThmI}. Thus $P Q=I-R$, with $R$ a compact operator on $\rho^{\alpha-\frac{n-1}{2}}H_0^{s-m}(X,\Omegazero^{\frac12}X)$ (since it maps $\rho^{\alpha-\frac{n-1}{2}}H_0^{s-m}(X,\Omegazero^{\frac12}X)\to\cA_\phg^\emptyset(X,\Omegazero^{\frac12}X)$, and the inclusion of this space back into $\rho^{\alpha-\frac{n-1}{2}}H_0^{s-m}(X,\Omegazero^{\frac12}X)$ is compact); likewise $Q' P=I-R'$, with $R'$ compact on $\rho^{\alpha-\frac{n-1}{2}}H_0^s(X,\Omegazero^{\frac12}X)$ (as it maps this space into $\cA^{\alpha+\eps}(X,\Omegazero^{\frac12}X)$ for some small $\eps>0$, the inclusion of which into $\rho^{\alpha-\frac{n-1}{2}}H_0^s(X,\Omegazero^{\frac12}X)$ is compact). Therefore, $P$ is Fredholm.

  Now, every $u\in\ker P$ in the domain of~\eqref{EqPGenP} satisfies $u=-R' u\in\cA^\alpha(X,\Omegazero^{\frac12}X)$; but then $P u=0$ in fact implies $u\in\cA_\phg^{\wh{\cE_+}}(X,\Omegazero^{\frac12}X)$ in view of the same arguments as around~\eqref{EqPchij}. Thus, if $u_1,\ldots,u_N\in\cA_\phg^{\wh{\cE_+}}(X,\Omegazero^{\frac12}X)$ is an orthonormal basis of $\ker P$, then the orthogonal projection $\Pi=\sum_{j=1}^N \la-,u_j\ra_{\rho^{\alpha-\frac{n-1}{2}}L^2(X,\Omegazero^{\frac12}X)}u_j$ to $\ker P$ has Schwartz kernel
  \[
    X\times X \ni (z,z') \mapsto \sum_{j=1}^N u_j(z) \rho'{}^{-2\alpha+(n-1)}\overline{u_j}(z'),
  \]
  where $\rho'$ is the lift of the boundary defining function of $X$ to the second factor. This implies the membership of $\Pi$ in~\eqref{EqPGenProj}.

  To control $\Pi'$, we consider its adjoint $(\Pi')^*=I-G^*P^*$, which is the orthogonal projection to $\ker P^*$ in $\rho^{-\alpha+\frac{n-1}{2}}L^2(X,\Omegazero^{\frac12}X)$. A calculation based on the relationship $\Omegazero X=\rho^{-(n-1)}\,\Omegab X$ shows that the boundary spectrum of $P^*\in\Psi_0^m(X,\Omegazero^{\frac12}X)$ is given by
  \begin{equation}
  \label{EqPSpecStar}
    \Specb(P^*)=\bigl\{(z,k)\colon(-\bar z+(n-1),k)\in\Specb(P)\bigr\}.
  \end{equation}
  Now, if $v\in\rho^{-\alpha+\frac{n-1}{2}}L^2(X,\Omegazero^{\frac12}X)\cap\ker P^*$, then
  \[
    v=Q^*P^*v+R^*v=R^*v\in\cA^{-\alpha+(n-1)}(X,\Omegazero^{\frac12}X).
  \]
  But by~\eqref{EqPSpecStar}, the smallest index set containing all elements $(z,k)\in\Specb(P^*)$ with $\Re z>-\alpha+(n-1)$ is equal to $\ol{\cE_-}+(n-1)$ where $\ol{\cE_-}=\{(\bar z,k)\colon(z,k)\in\cE_-\}$, and therefore $P^*v=0$ implies $v\in\cA_\phg^{\ol{\wh{\cE_-}}+(n-1)}(X,\Omegazero^{\frac12}X)$. Therefore,
  \[
    (\Pi')^* \in \Psi^{-\infty,(\ol{\wh{\cE_-}}+(n-1),\ol{\wh{\cE_-}}+2\alpha)}(X,\Omegazero^{\frac12}X),
  \]
  which implies~\eqref{EqPGenProj}.

  The generalized inverse $G$ is related to the left and right parametrices via
  \begin{align*}
    &G = (Q'P+R')G = Q'(I-\Pi')+R'G(P Q+R) \\
    &\qquad =Q'(I-\Pi')+R'(I-\Pi)Q+R'G R = Q'+R'Q+R'G R - Q'\Pi'-R'\Pi Q;
  \end{align*}
  note then that
  \[
    R'Q\in\Psi_0^{-\infty,(\wh{\cE_+^\flat},\wh{\cE_+^\flat}+\wh{\cE_-^\sharp}+(n-1),\wh{\cE_-^\sharp}+(n-1))}(X,\Omegazero^{\frac12}X)
  \]
  and also $R'G R\in\Psi^{-\infty,(\wh{\cE_+^\flat},\wh{\cE_-^\flat}+(n-1))}(X,\Omegazero^{\frac12}X)$ lies in this space. If $P$ is invertible and therefore $\Pi=0$, $\Pi'=0$, then from $P G=I$ and $P^*G^*=I$ we conclude that the index set of $G$ at $\lb$, resp.\ $\rb$ can be reduced to $\wh{\cE_+}$, resp.\ $\wh{\cE_-}+(n-1)$.

  If $P$ is not invertible, we compute using Proposition~\ref{PropCC0} that
  \begin{align*}
    Q'\Pi',\ R'\Pi Q \in \Psi^{-\infty,(\cE_\lb,\cE_\ff,\cE_\rb)}(X,\Omegazero^{\frac12}X)
  \end{align*}
  for some (explicit) index sets $\cE_\lb,\cE_\rb$ with $\Re\cE_\lb>\alpha$ and $\Re\cE_\rb>-\alpha+(n-1)$, and with (the somewhat wasteful index set) $\cE_\ff$ given in the statement of the Corollary. The index set of $G$ at $\lb$ can then be improved using $P G=I-\Pi'$ by a simple adaptation of the arguments around~\eqref{EqPchij}: the restriction $G_{z'_0}$ of the Schwartz kernel of $G$ to the preimage of $z'_0\in X^\circ$ under the projection $X\times X^\circ\to X^\circ$ after division by the lift of a positive $\half$-density on $X^\circ$ satisfies $P(\chi_0 G_{z'_0})\in\cA_\phg^{\wh{\cE_-}+2\alpha}(X,\Omegazero^{\frac12}X)$ where $\chi_0\in\CIc(X)$ cuts off to a neighborhood of $z'_0$; therefore $\chi_0 G_{z'_0}\in\cA_\phg^{\wh{\cE_+}\extcup\,(\wh{\cE_-}+2\alpha)}(X,\Omegazero^{\frac12}X)$ by indicial operator arguments. Thus, the index set of $G$ at $\lb$ can be reduced to $\wh{\cE_+}\extcup\,(\wh{\cE_-}+2\alpha)$. The argument at $\rb$ is analogous upon passing to the adjoint equation $P^*G^*=I-(\Pi')^*$.
\end{proof}

\subsection{Parametrices in the calculus with bounds}
\label{SsPB}

In the case that the fully elliptic 0-ps.d.o.\ $P$ does not have constant boundary spectrum, the Schwartz kernel of detailed parametrices cannot be polyhomogeneous anymore (see however \cite{KrainerMendozaBundle}). In this case, one can still construct a parametrix in the calculus with bounds.

\begin{thm}[Precise parametrix in the 0-calculus with bounds]
\label{ThmB}
  Let $\alpha_0<\alpha_1$. Let $P\in\Psi_0^m(X,\Omegazero^{\frac12}X)$ be fully elliptic at all weights $\alpha\in[\alpha_0-\eps,\alpha_1+\eps]$ for some small $\eps>0$. Then there exists operators
  \begin{equation}
  \label{EqBPx}
    Q, Q' \in \Psi_0^{-m}(X,\Omegazero^{\frac12}X) + \Psi_0^{-\infty,(\alpha_1,-\alpha_0+(n-1))}(X,\Omegazero^{\frac12}X) + \Psi^{-\infty,(\alpha_1,-\alpha_0+(n-1))}(X,\Omegazero^{\frac12}X)
  \end{equation}
  so that $P Q=I-R$ and $Q'P=I-R'$ with
  \[
     R \in \Psi^{-\infty,(\infty,-\alpha_0+(n-1))}(X,\Omegazero^{\frac12}X), \qquad
     R' \in \Psi^{-\infty,(\alpha_1,\infty)}(X,\Omegazero^{\frac12}X).
  \]
\end{thm}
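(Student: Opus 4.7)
The plan is to replicate the parametrix construction of Theorem~\ref{ThmI} (in particular Proposition~\ref{PropPNInv}), with polyhomogeneous expansions replaced throughout by conormal bounds in the 0-calculus with bounds. The essential analytic input substituting for the discrete boundary spectrum data is a \emph{strip of invertibility}: applying Lemma~\ref{LemmaCNbsc2Ind} with parametric dependence on $y$, full ellipticity at every weight in $[\alpha_0-\eps,\alpha_1+\eps]$ implies that $I(P,y,\sigma)^{-1}$ is holomorphic in $\sigma$ with uniform polynomial-in-$\sigma$ bounds on the closed strip $\{-\alpha_1-\tfrac{\eps}{2}\le\Im\sigma\le -\alpha_0+\tfrac{\eps}{2}\}$, uniformly in $y\in\pa X$. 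I describe only the right parametrix $Q$; the left parametrix $Q'$ is constructed analogously or by duality.

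Start with a standard symbolic parametrix $Q_0\in\Psi_0^{-m}(X,\Omegazero^{\frac12}X)$ so that $PQ_0=I-R_0$ with $R_0\in\Psi_0^{-\infty}(X,\Omegazero^{\frac12}X)$. Next, invert the reduced normal operator at each $(y,\hat\eta)$: existence and smoothness in $(y,\hat\eta)$ come from Proposition~\ref{PropPNInv}. I run the construction of that proposition but with the Mellin inversion contour placed on $\Im\sigma=-\alpha_1$ for the expansion at $\lb_\bop$ (yielding conormal weight $\alpha_1$) and on $\Im\sigma=-\alpha_0$ for the expansion at $\rb_\bop$ (yielding conormal weight $-\alpha_0$); both contours lie within the strip of invertibility, so no residues appear and only conormal estimates (not polyhomogeneous expansions) are produced. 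The bounded version of Proposition~\ref{PropCNConstr} then supplies
\[
  Q_1\in\Psi_0^{-\infty,(\alpha_1,-\alpha_0+(n-1))}(X,\Omegazero^{\frac12}X),\qquad \hat N(Q_1,y,\hat\eta)=\hat N(P,y,\hat\eta)^{-1}\hat N(R_0,y,\hat\eta).
\]
Consequently $R_1:=I-P(Q_0+Q_1)$ has trivial reduced normal operator, so by Remark~\ref{RmkCNDecReg} it vanishes to one extra order at $\ff$. The remaining behavior at $\lb$ is eliminated to infinite order by an iterative Mellin inversion against $I(P,y,\sigma)^{-1}$ on $\Im\sigma=-\alpha_1$ at successively higher decay orders (paralleling step~(2) of the proof of Proposition~\ref{PropPNInv}), producing $Q_2$ in the 0-calculus with bounds, supported away from $\rb$ and with $\alpha_1$-decay at $\lb$, such that $R_2:=I-P(Q_0+Q_1+Q_2)\in\Psi_0^{-\infty,(\infty,1,-\alpha_0+(n-1))}(X,\Omegazero^{\frac12}X)$. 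Finally, an asymptotic Neumann series $\tilde R_2\sim\sum_{j\ge1}R_2^j$---justified by Proposition~\ref{PropCCBounds}, since at each composition step $\alpha_\rb+\beta_\lb=\infty>n-1$ and $R_2^j$ gains $\ff$-weight $j$---produces $\tilde R_2$ of infinite order at $\ff$, which descends to an element of $\Psi^{-\infty,(\infty,-\alpha_0+(n-1))}(X,\Omegazero^{\frac12}X)$. Setting $Q:=(Q_0+Q_1+Q_2)(I+\tilde R_2)$ gives the required right parametrix in the class~\eqref{EqBPx}, with $PQ=I-R$ and $R\in\Psi^{-\infty,(\infty,-\alpha_0+(n-1))}(X,\Omegazero^{\frac12}X)$.

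The principal technical obstacle is the precise tracking of conormal bounds through the Mellin inversion steps with $y$-dependent integrands. Concretely, one must verify that contour integrals
\[
  q(y,s)=\frac{1}{2\pi}\int_{\Im\sigma=-\alpha_1}s^{i\sigma}I(P,y,\sigma)^{-1}r(y,\sigma)\,\dd\sigma,
\]
with $r$ smooth in $y$ and suitably conormal in $\sigma$, yield functions conormal at $s=0$ with weight $\alpha_1$ and smooth in $y$. Given the uniform-in-$y$ polynomial bounds on $I(P,y,\sigma)^{-1}$ in the strip of invertibility, this reduces to a routine differentiation-under-the-integral calculation in the spirit of the proof of Proposition~\ref{PropCNConstr}; corresponding care is required in establishing the calculus-with-bounds version of that proposition.
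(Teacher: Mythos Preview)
Your proposal is correct and follows essentially the same route as the paper: symbolic parametrix, inversion of the reduced normal operator, indicial-operator iteration at $\lb$, and a Neumann series at $\ff$. The one noteworthy difference is how you produce the normal-operator inverse. You propose to re-run the steps of Proposition~\ref{PropPNInv} in the bounded setting, placing Mellin contours inside the strip of invertibility and tracking conormal (rather than polyhomogeneous) estimates throughout; this obliges you to re-verify at each stage that the construction stays in the range of the reduced normal operator map (the ``principal technical obstacle'' you flag). The paper instead invokes Proposition~\ref{PropPNInv} as a black box at each fixed boundary point $y$---where the boundary spectrum is trivially constant---observes that the resulting polyhomogeneous index sets at $\lb_\bop$ and $\rb_\bop$ have real parts uniformly exceeding $\alpha_1$ and $-\alpha_0$ respectively (by the full ellipticity on the whole interval), and assembles these pointwise inverses into $P^-\in\Psi_0^{-m,(\alpha'_1,-\alpha'_0+(n-1))}(X,\Omegazero^{\frac12}X)$; then $Q_1:=P^-R_0$ is obtained as a composition in the $0$-calculus rather than via Proposition~\ref{PropCNConstr}. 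This reuse of the polyhomogeneous result is more economical, while your approach is more self-contained. One small point: place your contours strictly inside the strip (at $-\alpha_1-\tfrac{\eps}{2}$ and $-\alpha_0+\tfrac{\eps}{2}$, say) so that the composition rules of Proposition~\ref{PropCCBounds}, which require strict inequalities when the two arguments of $\min$ coincide, do not cause losses; the paper handles this by working with auxiliary weights $\alpha'_0<\alpha_0$, $\alpha'_1>\alpha_1$ throughout.
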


\begin{rmk}[Existence of a gap when $X$ is compact]
\label{RmkBCompact}
  When $X$ is compact and $P$ is fully elliptic at the weight $\alpha\in\R$, then the hypotheses of Theorem~\ref{ThmB} are satisfied when $\alpha_0<\alpha$ and $\alpha_1>\alpha$ are sufficiently close to $\alpha$ and $\eps>0$ is sufficiently small.
\end{rmk}

\begin{proof}[Proof of Theorem~\usref{ThmB}]
  Since the arguments are very similar to and indeed more transparent (requiring less bookkeeping) than those in~\S\ref{SsPC}, we shall be brief. We take $Q_0\in\Psi_0^{-m}(X,\Omegazero^{\frac12}X)$ to be a symbolic parametrix, so $P Q_0=I-R_0$ with $R_0\in\Psi_0^{-\infty}(X,\Omegazero^{\frac12}X)$. We can apply Proposition~\ref{PropPNInv} to write the inverse $\hat N(P,y,\hat\eta)^{-1}$ (of $\hat N(P,y,\hat\eta)$ regarded as a map between the spaces in~\eqref{EqPNInv}) for each $y$ separately as the reduced normal operator at $y$ of an element of the large 0-calculus. Note that the index set of $\hat N(P,y,\hat\eta)^{-1}$ at $\lb_\bop$ has real part larger than $\alpha_1+\eps$, and the index set at $\rb_\bop$ has real part larger than $-(\alpha_0-\eps)$. Assembling these inverses, we can thus construct an operator
  \[
    P^-\in\Psi_0^{-m,(\alpha'_1,-\alpha'_0+(n-1))}(X,\Omegazero^{\frac12}X)
  \]
  with $\hat N(P^-,y,\hat\eta)=N(P,y,\hat\eta)^{-1}$ for all $y,\hat\eta$. Here, we denote by $\alpha'_0\in[\alpha_0-\eps,\alpha_0)$ and $\alpha'_1\in(\alpha_1,\alpha_1+\eps]$ two weights which may increase (in the case of $\alpha'_0$) or decrease (in the case of $\alpha'_1$) from line to line throughout the rest of the proof.

  Proposition~\ref{PropCCBounds} gives $Q_1:=P^-R_0\in\Psi_0^{-\infty,(\alpha'_1,-\alpha'_0+(n-1))}(X,\Omegazero^{\frac12}X)$, and we have
  \[
    R_1=R_0-P Q_1\in\rho_\ff\Psi_0^{-\infty,(\alpha'_1,-\alpha'_0+(n-1))}(X,\Omegazero^{\frac12}X).
  \]
  At $\lb$, we can solve this error away using indicial operator arguments, thus producing an operator $Q_2\in\rho_\ff\Psi_0^{-\infty,(\alpha'_1,\infty)}(X,\Omegazero^{\frac12}X)$ so that
  \[
    R_2=R_1-P Q_2 \in \rho_\ff\Psi_0^{-\infty,(\infty,-\alpha'_0+(n-1))}(X,\Omegazero^{\frac12}X).
  \]
  This is solved away using an asymptotic Neumann series, i.e.\ taking an operator $\tilde R_2\in\rho_\ff\Psi_0^{-\infty,(\infty,-\alpha'_0+(n-1))}(X,\Omegazero^{\frac12}X)$ with $\tilde R_2\sim\sum_{j=1}^\infty R_2^j$.\footnote{The composition properties of elements of $\rho_\ff^j\Psi_0^{-\infty,(\beta_\lb,\beta_\rb)}(X,\Omegazero^{\frac12}X)$ can be deduced for general $j$ from those for $j=0$ by writing $\rho_\ff=x'\rho_\rb^{-1}$.} Defining $Q=(Q_0+Q_1+Q_2)(I+\tilde R_2)$, we then have
  \[
    P Q = I-R,\qquad R \in \Psi_0^{-\infty,(\infty,\infty,-\alpha'_0+(n-1))}(X,\Omegazero^{\frac12}X) \subset \Psi^{-\infty,(\infty,-\alpha'_0+(n-1))}(X,\Omegazero^{\frac12}X),
  \]
  as desired. In the membership~\eqref{EqBPx}, we use that
  \[
    \Psi_0^{-\infty,(\alpha_1,\alpha_1-\alpha_0+(n-1),-\alpha_0+(n-1))}(X,\Omegazero^{\frac12}X)=\Psi^{-\infty,(\alpha_1,-\alpha_0+(n-1))}(X,\Omegazero^{\frac12}X).
  \]

  We can construct a left parametrix $Q'$ as usual as the adjoint of a right parametrix $(Q')^*$ for $P^*$. The proof is complete.
\end{proof}

\begin{cor}[Generalized inverse in the 0-calculus with bounds]
\label{CorBGen}
  Suppose $X$ is compact, and let $P$ be an operator satisfying the assumptions of Theorem~\usref{ThmB} (see also Remark~\usref{RmkBCompact}). Then for any $\alpha\in[\alpha_0,\alpha_1]$, the operator $P$ is a Fredholm operator as in~\eqref{EqPGenP}. Put
  \[
    \beta_\lb:=\min(\alpha_1,-\alpha_0+(n-1)+2\alpha)>\alpha,\qquad
    \beta_\rb:=\beta_\lb-2\alpha+(n-1)>-\alpha+(n-1).
  \]
  Then the generalized inverse $G$ of $P$ satisfies
  \[
    G \in \Psi_0^{-m}(X,\Omegazero^{\frac12}X) + \Psi_0^{-\infty,(\beta_\lb,\beta_\rb)}(X,\Omegazero^{\frac12}X) + \Psi^{-\infty,(\beta_\lb,\beta_\rb)}(X,\Omegazero^{\frac12}X).
  \]
\end{cor}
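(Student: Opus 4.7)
The plan is to follow the proof of Corollary~\ref{CorPGen} essentially verbatim, with polyhomogeneous index-set bookkeeping replaced by the weight bookkeeping afforded by Theorem~\ref{ThmB}. For any $\alpha \in [\alpha_0, \alpha_1]$, Theorem~\ref{ThmB} provides parametrices $Q, Q'$ with $P Q = I - R$ and $Q' P = I - R'$, where $R' \in \Psi^{-\infty,(\alpha_1, \infty)}(X,\Omegazero^{\frac12}X)$ and $R \in \Psi^{-\infty,(\infty, -\alpha_0 + (n-1))}(X,\Omegazero^{\frac12}X)$. Since $\alpha_1 > \alpha$ and $-\alpha_0 + (n-1) > -\alpha + (n-1)$, the errors $R, R'$ are compact on the weighted 0-Sobolev spaces (mapping into subspaces with strictly improved weights whose inclusion back is compact on the compact manifold $X$), which proves the Fredholm property of~\eqref{EqPGenP}.

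Next, I would determine the regularity of $\ker P$ and $\ker P^*$. Any $u \in \ker P$ lying in $\rho^{\alpha - (n-1)/2} H_0^s(X, \Omegazero^{\frac12}X)$ satisfies $u = R' u$; a direct kernel-integral estimate exploiting the $\alpha_1$-decay of $R'$ at $\lb$ and its rapid decay at $\rb$ yields $u \in \cA^{\alpha_1}(X, \Omegazero^{\frac12}X)$. Dually, cokernel elements $v \in \ker P^* \cap \rho^{-\alpha + (n-1)/2} L^2(X, \Omegazero^{\frac12}X)$ satisfy $v = R^* v \in \cA^{-\alpha_0 + (n-1)}(X, \Omegazero^{\frac12}X)$. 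Choosing orthonormal bases for these finite-dimensional spaces in the weighted inner products (which involve the $\rho^{\mp 2\alpha + (n-1)}$ factors, exactly as in the proof of Corollary~\ref{CorPGen}), the Schwartz kernels of the finite-rank projections $\Pi = I - GP$ and $\Pi' = I - PG$ acquire weights at $\lb$ and $\rb$ controlled by $\alpha_1$ and $-\alpha_0 + (n-1)$, analogously to the bounds $\Pi \in \Psi^{-\infty,(\wh{\cE_+}, \wh{\cE_+} - 2\alpha + (n-1))}$ and $\Pi' \in \Psi^{-\infty,(\wh{\cE_-} + 2\alpha, \wh{\cE_-} + (n-1))}$ from Corollary~\ref{CorPGen}, upon replacing $\wh{\cE_\pm}$ by the corresponding lower bounds coming from $\alpha_1$ and $-\alpha_0$.

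Starting from $G = Q'(I - \Pi') + R' G$ and $G = (I - \Pi) Q + G R$, substitution yields
\[
  G = Q' + R' Q + R' G R - Q' \Pi' - R' \Pi Q,
\]
exactly as in the proof of Corollary~\ref{CorPGen}. Proposition~\ref{PropCCBounds} and the composition of fully residual operators show that each term lies in $\Psi_0^{-m}(X, \Omegazero^{\frac12}X) + \Psi_0^{-\infty,(\beta_\lb, \beta_\rb)}(X, \Omegazero^{\frac12}X) + \Psi^{-\infty,(\beta_\lb, \beta_\rb)}(X, \Omegazero^{\frac12}X)$. The delicate sharpening of the left (resp.\ right) index to $\beta_\lb$ (resp.\ $\beta_\rb$) proceeds by the indicial-operator bootstrap around equation~\eqref{EqPchij}: restricting the Schwartz kernel of $G$ to an interior level set $z_0' \in X^\circ$, cutting off to a collar neighborhood of $\pa X$, and applying $PG = I - \Pi'$ yields a PDE for the restriction whose right-hand side has weight controlled by the analysis of $\Pi'|_{z_0'}$. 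The full ellipticity of $P$ at all weights in $[\alpha_0 - \eps, \alpha_1 + \eps]$, invoked through Proposition~\ref{PropPNInv} at a weight slightly below $\beta_\lb$ (so as to stay within the ellipticity interval, at the cost of an arbitrarily small $\delta > 0$), then delivers the asserted improvement. The analogous argument at $\rb$ uses the adjoint identity $G^* P^* = I - \Pi$ together with the full ellipticity of $P^*$ on the shifted weight range.

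The main obstacle is verifying that all of these weight improvements remain within the calculus with bounds in the absence of polyhomogeneous structure; this is precisely the content of Proposition~\ref{PropPNInv}, whose uniform inversion of the reduced normal operator in the boundary point and the frequency direction ensures that the bootstrap closes up to an arbitrarily small $\delta$-loss, matching the structure of the parametrix construction in the proof of Theorem~\ref{ThmB}.
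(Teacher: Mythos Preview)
Your proposal is correct and takes essentially the same approach as the paper, which in fact says only that the proof ``is a straightforward modification of that of Corollary~\ref{CorPGen}'' and leaves the details to the reader. One small imprecision: the sharpening of the $\lb$ and $\rb$ weights via the bootstrap around~\eqref{EqPchij} uses only the indicial family $I(P,y,\sigma)$ (and the absence of indicial roots with real part in $[\alpha_0-\eps,\alpha_1+\eps]$), not the full reduced normal operator inverse of Proposition~\ref{PropPNInv}; your invocation of the latter is harmless but more than is needed.
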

\begin{proof}
  We leave the proof to the reader, as it is a straightforward modification of that of Corollary~\ref{CorPGen}.
\end{proof}

\bibliographystyle{alphaurl}

\end{document}